\newtheorem{theorem}{Theorem}[section]
\newtheorem{proposition}[theorem]{Proposition}
\newtheorem{lemma}[theorem]{Lemma}
\theoremstyle{definition}
\newtheorem{definition}[theorem]{Definition}
\newtheorem{example}[theorem]{Example}
\newtheorem{remark}[theorem]{Remark}
\numberwithin{equation}{section}
\providecommand{\G}{\mathcal{G}}
\providecommand{\F}{\mathcal{F}}
\providecommand{\D}{\mathcal{D}}
\providecommand{\B}{\mathcal{B}}
\providecommand{\A}{\mathcal{A}}
\providecommand{\T}{T}
\providecommand{\N}{\mathcal{B}}
\providecommand{\NN}{K}
\providecommand{\i}{\mathfrak{i}}
\begin{document}

\title{Continued fraction algorithm for Sturmian colorings of trees}

\date{September 20, 2016}
\author{Dong Han Kim}
\address{Department of Mathematics Education, Dongguk University--Seoul, 30 Pildong-ro 1-gil, Jung-gu, Seoul, 04620
} \email{kim2010@dongguk.edu}
\author{Seonhee Lim}
\address{Department of Mathematical Sciences, Seoul National University, Kwanak-ro 1, Kwanak-gu, Seoul
08826} \email{slim@snu.ac.kr}
\keywords{tree, Sturmian, colorings, factor complexity, continued fraction algorithm}
\subjclass[2010]{ 20E08, 20F65, 05C15, 37E25, 68R15}

\begin{abstract}  
Factor complexity $b_n(\phi)$ for a vertex coloring $\phi$ of a regular tree is the number of classes of $n$-balls up to color-preserving automorphisms. Sturmian colorings are colorings of minimal unbounded factor complexity $b_n(\phi) = n+2$. 

In this article, we prove an induction algorithm for Sturmian colorings using colored balls in a way analogous to the continued fraction algorithm for Sturmian words. Furthermore, we characterize Sturmian colorings in terms of the data appearing in the induction algorithm. 
\end{abstract}
\maketitle
\section{Introduction}
Factor complexity (also called subword complexity or block complexity) $b_n(u)$ of an (one-sided) infinite word $u$, which counts the number of distinct $n$-subwords, has been studied for a long time in symbolic dynamics  \cite{Fo}. 
A classical theorem of Hedlund and Morse says that Sturmian sequences, which are sequences of minimal unbounded complexity $b_n(u)=n+1$, correspond to irrational rotations \cite{HM}. 

Factor complexity was generalized from sequences to vertex colorings of a regular tree in \cite{KL1} and \cite{KL2}. Fo a graph $X$, let us denote its vertex set by $VX$ and its set of oriented edges by $EX$.
For a vertex coloring $\phi : V\T \to \A$  of a $d$-regular tree $\T$, let $\mathbf{B}_n(\phi)$ the set of classes of $n$-balls appearing in $T$ colored by $\phi$ up to coloring-preserving isomorphisms of $n$-balls. 
The \emph{factor complexity  $b_n(\phi)$ (called subword complexity in \cite{KL1}) } is the cardinality of the set $\mathbf{B}_n(\phi)$. 

A coloring $\phi$ is called \emph{periodic} if $b_n(\phi)$ is bounded. 
A coloring $\phi_g$ associated to an automorphism $g$ of a uniform tree $T$ was constructed in \cite{LMZ} and \cite{ALN} so that $g$ is a commensurator element of a cocompact lattice $\Gamma \subset G=Aut(T)$ if and only if $\phi_g$ is a periodic coloring. (See \cite{LMZ},  \cite{BM}, \cite{Serre}, Chapter 6 of \cite{BL} for commensurators.) 
One can associate a vertex coloring to any automorphism of a tree, thus classifying vertex colorings of trees may give us a tool to classify automorphisms of a tree.

\begin{figure}
\includegraphics[width=5cm]{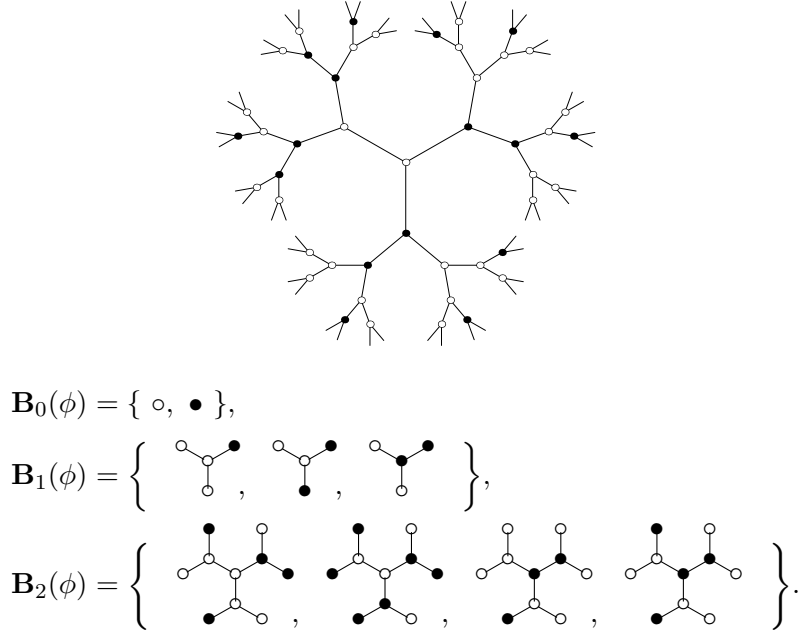}
\begin{align*}
{\mathbf B}_0(\phi) &= \{ \ \circ, \ \bullet \ \}, \\
{\mathbf B}_1(\phi) &= \bigg \{ \quad 
\raisebox{-.2\height}{\begin{tikzpicture}[scale=.4]
  \tikzstyle{every node}=[inner sep=-1.5pt]
  \node (0) at (0,0) {$\circ$};
  \node (1) at (.87,.5) {$\bullet$};
  \node (2) at (-.87,.5) {$\circ$};
  \node (3) at (0,-1) {$\circ$};
  \path[-] (0) edge (1) (0) edge (2) (0) edge (3);
\end{tikzpicture},} \quad
\raisebox{-.2\height}{\begin{tikzpicture}[scale=.4]
  \tikzstyle{every node}=[inner sep=-1.5pt]
  \node (0) at (0,0) {$\circ$};
  \node (1) at (.87,.5) {$\bullet$};
  \node (2) at (-.87,.5) {$\circ$};
  \node (3) at (0,-1) {$\bullet$};
  \path[-] (0) edge (1) (0) edge (2) (0) edge (3);
\end{tikzpicture},} \quad
\raisebox{-.2\height}{\begin{tikzpicture}[scale=.4]
  \tikzstyle{every node}=[inner sep=-1.5pt]
  \node (0) at (0,0) {$\bullet$};
  \node (1) at (.87,.5) {$\bullet$};
  \node (2) at (-.87,.5) {$\circ$};
  \node (3) at (0,-1) {$\circ$};
  \path[-] (0) edge (1) (0) edge (2) (0) edge (3);
\end{tikzpicture}} \quad  \bigg \}, \\
\mathbf B_2(\phi) &= \Bigg \{ \quad 
\raisebox{-.3\height}{\begin{tikzpicture}[scale=.4]
  \tikzstyle{every node}=[inner sep=-1.5pt]
  \node (0) at (0,0) {$\circ$};
  \node (1) at (.87,.5) {$\bullet$};
  \node (2) at (-.87,.5) {$\circ$};
  \node (3) at (0,-1) {$\circ$};
  \node (4) at (1.73,0) {$\bullet$};
  \node (5) at (.87,1.5) {$\circ$};
  \node (6) at (-.87,1.5) {$\bullet$};
  \node (7) at (-1.73,0) {$\circ$};
  \node (8) at (-.87,-1.5) {$\bullet$};
  \node (9) at (.87,-1.5) {$\circ$};
  \path[-] (0) edge (1) (0) edge (2) (0) edge (3) (1) edge (4) (1) edge (5) (2) edge (6) (2) edge (7) (3) edge (8) (3) edge (9);
\end{tikzpicture},} \quad
\raisebox{-.3\height}{\begin{tikzpicture}[scale=.4]
  \tikzstyle{every node}=[inner sep=-1.5pt]
  \node (0) at (0,0) {$\circ$};
  \node (1) at (.87,.5) {$\bullet$};
  \node (2) at (-.87,.5) {$\circ$};
  \node (3) at (0,-1) {$\bullet$};
  \node (4) at (1.73,0) {$\bullet$};
  \node (5) at (.87,1.5) {$\circ$};
  \node (6) at (-.87,1.5) {$\bullet$};
  \node (7) at (-1.73,0) {$\bullet$};
  \node (8) at (-.87,-1.5) {$\bullet$};
  \node (9) at (.87,-1.5) {$\circ$};
  \path[-] (0) edge (1) (0) edge (2) (0) edge (3) (1) edge (4) (1) edge (5) (2) edge (6) (2) edge (7) (3) edge (8) (3) edge (9);
\end{tikzpicture},} \quad
\raisebox{-.3\height}{\begin{tikzpicture}[scale=.4]
  \tikzstyle{every node}=[inner sep=-1.5pt]
  \node (0) at (0,0) {$\bullet$};
  \node (1) at (.87,.5) {$\bullet$};
  \node (2) at (-.87,.5) {$\circ$};
  \node (3) at (0,-1) {$\circ$};
  \node (4) at (1.73,0) {$\circ$};
  \node (5) at (.87,1.5) {$\circ$};
  \node (6) at (-.87,1.5) {$\circ$};
  \node (7) at (-1.73,0) {$\circ$};
  \node (8) at (-.87,-1.5) {$\bullet$};
  \node (9) at (.87,-1.5) {$\circ$};
  \path[-] (0) edge (1) (0) edge (2) (0) edge (3) (1) edge (4) (1) edge (5) (2) edge (6) (2) edge (7) (3) edge (8) (3) edge (9);
\end{tikzpicture},} \quad
\raisebox{-.3\height}{\begin{tikzpicture}[scale=.4]
  \tikzstyle{every node}=[inner sep=-1.5pt]
  \node (0) at (0,0) {$\bullet$};
  \node (1) at (.87,.5) {$\bullet$};
  \node (2) at (-.87,.5) {$\circ$};
  \node (3) at (0,-1) {$\circ$};
  \node (4) at (1.73,0) {$\circ$};
  \node (5) at (.87,1.5) {$\circ$};
  \node (6) at (-.87,1.5) {$\bullet$};
  \node (7) at (-1.73,0) {$\circ$};
  \node (8) at (-.87,-1.5) {$\bullet$};
  \node (9) at (.87,-1.5) {$\circ$};
  \path[-] (0) edge (1) (0) edge (2) (0) edge (3) (1) edge (4) (1) edge (5) (2) edge (6) (2) edge (7) (3) edge (8) (3) edge (9);
\end{tikzpicture}} \quad  \Bigg \}. 
\end{align*}
\caption{An example of a Sturmian tree}
\label{sturmfig}
\end{figure}
A coloring $\phi$ is called \emph{Sturmian} if it has minimal unbounded factor complexity $b_n(\phi)$, which is $n+2$. 
See Figure~\ref{sturmfig} for an example of a Sturmian coloring and the sets $\mathbf{B}_n(\phi)$ for $n=0,1,2$.

A fundamental feature of Sturmian colorings is that for each $n$, there is a unique class of $n$-balls, called the \emph{special $n$-ball} and denoted by $S_n$, which is the restriction (to the concentric $n$-ball) of two distinct classes of $(n+1)$-balls denoted by $A_{n+1}, B_{n+1}$. We denote by $C_n$ the central $n$-ball of $S_{n+1}$. For $\phi$ in Figure 1, $S_0$ is the white vertex, $S_1$ is the last ball in $\mathbf B_1(\phi)$.

Let $G$ be the group of $\phi$-preserving automorphisms of $\T$, which is a subgroup of the automorphism group $\mathrm{Aut}(\T)$. Define $X= X_\phi: = G \backslash \T$ to be the graph obtained from quotienting $\T$ by $G$. We call $X_\phi$ \emph{the quotient graph of} $(T,\phi)$.

The quotient graph $X=X_\phi$ has a structure of edge-indexed graph $(X,i)$, i.e. a graph with index $i : EX \to \mathbb{N}$ on the set of \emph{oriented} edges. It is in fact the edge-indexed graph associated to the graph of groups $(X, \mathrm{Stab}_{G}(\cdot))$ which is the graph $X$ with stabilizers $\mathrm{Stab}_{G}(x)$ attached to each $x \in VX \cup EX$. (See Chapter 2 of \cite{BL} for details.) 

The tree $\T$ is the universal cover of the edge-indexed graph $(X,i)$ (see Appendix of \cite{BL}). The graph $X$ is again colored by $\phi$ and the original coloring is the lift of $\phi$ from $X$ to $T$. In a previous work, we defined Stumian colorings of bounded type and characterized the quotient graph $X$ as follows.


For $v \in V\T$, we define the \emph{type set of $v$} as the set of $m \in \mathbb{Z}_{\geq 0}$ such that the colored $m$-ball centered at $v$ is special. 
A Sturmian coloring $\phi$ on tree $\T$ is called \emph{of bounded type} if 
the type set of each vertex is finite. 
If one vertex is of bounded type, then all vertices are of bounded type (\cite{KL1}). 
Throughout the paper, by a \emph{cycle}, we mean a cycle of length $>1$. A cycle of length 1 will be called a \emph{loop}.
\begin{theorem}[\cite{KL1} Theorem 3.4] \label{thm:1.1}
Any Sturmian coloring $\phi$ is a lift of a coloring on a graph $X$ which is an infinite ray (with loops possibly attached)
\begin{center}
\begin{tikzpicture}[every loop/.style={}]
  \tikzstyle{every node}=[inner sep=-1pt]
  \node (5) at (-1,0) {$\bullet$};
  \node (6) at  (0,0) {$\bullet$};
  \node (7) at  (1,0) {$\bullet$};
  \node (8) at  (2,0) {$\bullet$};
  \node (9) at  (3,0) {$\bullet$};
  \node (10) at (4,0) {$\bullet$};
  \node (11) at (5,0) {$\cdots$};

\tikzstyle{every loop}=   [-, shorten >=.5pt]

  \path[-] 
	(5)  edge  (6)
	(6)  edge (7)
	(7)  edge (8)
	(8)  edge (9)
	(9)  edge  (10)
	(10) edge (11);
  \path[dotted] 
		 (5) edge [loop left] (5)
		 (6) edge [loop above] (6)
		 (7) edge [loop above] (7)
		 (8) edge [loop above] (8)
		 (9) edge [loop above] (9)
		 (10) edge [loop above] (10);  
\end{tikzpicture}
\end{center}
or a biinfinite line (with loops possibly attached)
\begin{center}
\begin{tikzpicture}[every loop/.style={}]
  \tikzstyle{every node}=[inner sep=-1pt]
  \node (0) at (-6,0) {$\cdots$};
  \node (1) at (-5,0) {$\bullet$} ;
  \node (2) at (-4,0) {$\bullet$} ;
  \node (3) at (-3,0) {$\bullet$};
  \node (4) at (-2,0) {$\bullet$};
  \node (5) at (-1,0) {$\bullet$};
  \node (6) at  (0,0) {$\bullet$};
  \node (7) at  (1,0) {$\bullet$};
  \node (8) at  (2,0) {$\bullet$};
  \node (9) at  (3,0) {$\bullet$};
  \node (10) at (4,0) {$\bullet$};
  \node (11) at (5,0) {$\cdots$};
  
  \tikzstyle{every loop}=   [-, shorten >=.5pt]

  \path[-] 
		 (0)  edge (1)
		 (1)  edge  (2)
		 (2)  edge (3)
		 (3)  edge  (4)
		 (4)  edge (5)
		 (5)  edge (6)
		 (6)  edge  (7)
		 (7)  edge  (8)
		 (8)  edge  (9)
		 (9)  edge  (10)
		 (10) edge (11) ;
  \path[dotted]
		 (1) edge [loop above] (1)
		 (2) edge [loop above] (2)
		 (3) edge [loop above] (3)
		 (4) edge [loop above] (4)
		 (5) edge [loop above] (5)
		 (6) edge [loop above] (6)
		 (7) edge [loop above] (7)
		 (8) edge [loop above] (8)
		 (9) edge [loop above] (9) 
		 (10) edge [loop above] (10) ; 
\end{tikzpicture}
\end{center}
Moreover, if the coloring $\phi$ is of bounded type, then the graph $X$ is the former. {If $x_i$ denotes the $i$-th vertex from the left, then there exists $m$ such that the maximum of the type set of $x_i$ is $m+i, \forall i \geq 0$.}
\end{theorem}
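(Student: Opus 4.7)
My plan is to analyse $X = G \backslash \T$ by identifying its vertex set with an inverse limit of $n$-ball classes, and then deriving its graph structure from the Sturmian hypothesis.

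I would first prove that two vertices $v, w \in V\T$ lie in the same $G$-orbit if and only if $[v]_n = [w]_n$ in $\mathbf{B}_n(\phi)$ for every $n \geq 0$, where $[v]_n$ denotes the class of the $n$-ball at $v$. The forward direction is trivial. For the converse, the color-preserving isomorphisms between the $n$-ball at $v$ and the $n$-ball at $w$ form a finite non-empty set, and these sets form an inverse system under restriction; a K\"onig-style compactness argument produces a compatible family assembling into a global $g \in G$ with $g(v)=w$. This identifies $VX$ with $\varprojlim \mathbf{B}_n(\phi)$ under the restriction maps $r_n : \mathbf{B}_{n+1}(\phi) \to \mathbf{B}_n(\phi)$.

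Next, the Sturmian condition $b_n(\phi) = n+2$, combined with the fact that every $n$-ball class extends to at least one $(n+1)$-ball class, forces $r_n$ to have a unique non-singleton fiber, namely $r_n^{-1}(S_n) = \{A_{n+1}, B_{n+1}\}$. A consistent sequence is therefore parametrized by its branching choice in $\{A,B\}$ at each level where it passes through $S_n$. To derive the ray or biinfinite-line form of $X$, I would show (a) $X$ has no cycle of length greater than one, and (b) every vertex of $X$ has at most two non-loop neighbors. Both statements are local: since the $n$-ball at a neighbor $w$ of $v$ lies inside the $(n+1)$-ball at $v$, the class $[w]_n$ is determined by $[v]_{n+1}$, and the Sturmian fiber condition forces the $G$-orbits of the $d$ neighbors of $v$ to fall into at most three classes (including possibly $[v]$ itself). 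Step (b), translating the global fiber condition into a local bound via the $\mathrm{Stab}_G(v)$-action on neighbors, is the main obstacle I anticipate.

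For the bounded-type refinement, I would define the orbit invariant $M([v]) := \max(\text{type set of } v) \in \mathbb{Z}_{\geq 0}$, which is finite by hypothesis. The goal is (i) for each value $k$ in a tail $\{m, m+1, m+2, \ldots\}$, exactly one orbit satisfies $M = k$; (ii) adjacent orbits in $X$ differ in $M$ by exactly one. For (i), a consistent sequence with $M = k$ has $s_k = S_k$, is forced below level $k$ by iterated restriction of $S_k$ (via the singleton fibers), and is determined above level $k$ by the unique extension in $\{A_{k+1}, B_{k+1}\}$ whose forced continuations never revisit any later $S_n$. For (ii), a direct comparison of the type sets of $v$ and an adjacent $w$ in $\T$, using that their $n$-balls overlap in an $(n-1)$-ball centred at the midpoint, shows the required shift by $1$. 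Combining these with the valence bound from the previous step yields the ray structure and the identity $\max(\text{type set of } x_i) = m + i$.
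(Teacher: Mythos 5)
First, a point of comparison: the present paper does not prove this statement at all --- it is imported verbatim from \cite{KL1} (Theorem 3.4) --- so there is no internal proof to measure yours against; what follows compares your plan with the logical structure of this paper and of that reference. Your setup is correct and clean: the K\"onig-lemma argument identifying $G$-orbits with consistent sequences of ball classes is valid (strictly you obtain an injection $VX\hookrightarrow\varprojlim\mathbf{B}_n(\phi)$ rather than a bijection, but that is all you use), and the count $b_{n+1}(\phi)=b_n(\phi)+1$ together with surjectivity of restriction does force a unique two-element fiber $\{A_{n+1},B_{n+1}\}$ over $S_n$.

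The genuine gap is exactly where you anticipate it, and it is not a technicality: it is the entire content of the theorem. Knowing $[v]_{n+1}$ determines the \emph{multiset} of classes $\{[w_1]_n,\dots,[w_d]_n\}$ of the neighbors of $v$, but the unique-fiber condition constrains concentric extensions of $n$-balls to $(n+1)$-balls and says nothing, at a single vertex and a single $n$, about how many distinct classes may occur among the neighbors; ``the Sturmian fiber condition forces the orbits of the $d$ neighbors to fall into at most three classes'' is asserted, not derived, and no purely local argument yields it. The acyclicity of $X$ receives no argument either. Beware also of circularity: in the present paper the local adjacency bound (Lemma~\ref{2.11KL}\,(2)) is \emph{deduced from} Theorem~\ref{thm:1.1}, so it cannot be borrowed here; in \cite{KL1} these facts are obtained by a global analysis of the sets of centers of special $n$-balls and of how the type sets of adjacent vertices interact, spread over several lemmas. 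Likewise, in the bounded-type refinement, both the uniqueness of the extension of $S_k$ ``whose forced continuations never revisit any later $S_n$'' (why not both of $A_{k+1},B_{k+1}$, or neither?) and the claim that the maximum of the type set shifts by exactly one across an edge are precisely the statements requiring proof; the overlap of the $n$-balls of adjacent vertices does not by itself relate their type sets. As it stands, the proposal is a sound reduction of the theorem to its two hard steps, with neither step carried out.
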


The edge-indexed graph $(X, i)$ of the first graph in the theorem above can be the edge-indexed graph associated to lattice of Nagao type (see Chapter 10 of \cite{BL} for such lattices).

In this article, we show an induction algorithm for Sturmian colorings that characterizes Sturmian colorings completely, analogous to a continued fraction algorithm for Sturmian words. 
\subsection{Induction algorithm}


\subsubsection{Sturmian words} 

Let us recall the correspondence between Sturmian words and irrational rotations. 
Fix $0 < \theta < 1$ and $0 \le \rho < 1.$  
Consider the orbit $ \{ \rho + n \theta \pmod 1 \}_{n \in \mathbb{N}} $ of a rotation with irrational slope $\theta$ and intercept $\rho$. Partition $[0,1)$ into $[0, 1-\theta)\sqcup[1-\theta, 1)$ and give index $0,1$ to the two sets.

An infinite word $\mathbf{s}=s_1 s_2 \cdots$ is a Sturmian word if and only if 
$$s_n = \lfloor (n+1)  \theta + \rho \rfloor - \lfloor n \theta + \rho \rfloor \text{ or } \lceil  (n+1)  \theta + \rho \rceil - \lceil n \theta + \rho \rceil,$$
for some $\theta$ and $\rho$. 
Note that in the first case, $s_n$ is the index of the set where the orbit belongs for the partition above and the second case corresponds to another similar partition.
See for example \cite[Section 2.1.2]{Loth}.

We choose two sequences of finite words $(u_n)$ and $(v_n)$ with alphabets $\{0,1\}$, satisfying $u_{-1}=0, v_{-1}=1$ and
\begin{equation}\label{eqn:induction} (u_{n+1},  v_{n+1}) = R(u_n,v_n) := (u_n,  u_n v_n) \quad
\mathrm{or}
\quad (u_{n+1}, v_{n+1})  = L(u_n,v_n) := ( v_n u_n,  v_n).\end{equation}
Rauzy showed that both sequences $u_n, v_n$ have the same limit which is a characteristic Sturmian word (i.e. Sturmian word with $\rho =0$), and conversely any characteristic Sturmian word is the limit of two such sequences \cite{R85}.

Two Sturmian words have the same factors if and only if they have the same slope \cite[Proposition 2.1.18]{Loth}.
For each $\theta$, the characteristic Sturmian word ${\mathbf s}_\theta$ is constructed by the partial quotients $a_k$ of the continued fraction expansion of the slope
$\theta = \frac{1}{a_1 + \dfrac{1}{a_2 + \ddots}}:$
$${\mathbf s}_\theta = \lim_{n \to \infty} u_n = \lim_{n \to \infty} v_n $$ 
where
\begin{equation}\label{ukvk}
\lim_{n \to \infty} (u_n , v_n) = \lim_{k \to \infty} R^{a_{2k+1}} \cdots R^{a_3} L^{a_2} R^{a_1 -1} (u_{-1}, v_{-1}).
\end{equation}
(see e.g. \cite[Proposition 2.2.24]{Loth}). 
 
There is an induction algorithm, closely related to continued fraction algorithm, for Sturmian words using Rauzy graphs (also called factor graphs) \cite{R82} (see also \cite{Ca}):
for a given infinite word $u$, the Rauzy graph $\mathfrak{G}_n$ is a finite oriented graph whose vertices are distinct $n$-words. There is an oriented edge from $u$ to $v$ if there are two letters $x,y$ such that $uy=xv$ is an $(n+1)$-word. 
For Sturmian words, the Rauzy graph $\mathfrak{G}_n$ is always a union of two cycles with an intersection which is either a vertex (case (i)) or a line segment (case (ii)). The $\mathfrak{G}_n$ belongs to the case (i) infinitely often. (See Arnoux and Rauzy's work \cite{AR} for transitions between case (i) and case (ii).)
Let $n_k$ be the sequence of positive integers such that $\mathfrak{G}_{n_k}$ belongs to case (i).
The two cycles in $\mathfrak{G}_{n_k}$ correspond to the finite words $(u_k, v_k)$.
Thus, the Rauzy graph $\mathfrak{G}_{n_k}$ evolves by the formula of \eqref{ukvk}. 

\begin{figure}
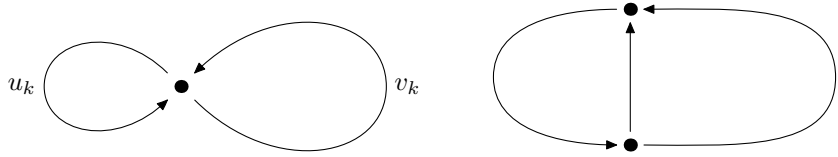

\includegraphics{Sturmian_tree_graph-4.mps}
\qquad 
\includegraphics{Sturmian_tree_graph-5.mps}
\caption{Rauzy graphs of Strumian word, case (i) and case (ii)}
\label{rauzygraph}
\end{figure}

\subsubsection{Sturmian colorings}
{Now let us describe our induction algorithm for Sturmian colorings. 
For a given Sturmian coloring $\phi$, let $\G_n$ be the graph whose vertices are elements of $\mathbf{B}_n(\phi)$. There is an edge between two classes $D,E$ of $n$-balls if there exist $n$-balls of center $x,y$ in the class $D,E$, respectively, such that $d(x,y)=1$. The graph $\G_n$ is an analogue of Rauzy graphs $\mathfrak{G}_n$.}

Except at the vertex $S_n$, the graph $\G_n$ is locally isomorphic as colored graphs to the quotient graph $X_\phi$ since for any class $D \neq S_n$ of $n$-balls, there is a unique class of $(n+1)$-ball containing $D$ concentrically. For the special class $S_n$, there are two classes of $(n+1)$-balls $A_{n+1}, B_{n+1}$ containing $S_n$. 

We define graphs $\G^A_n, \G^B_n$ as follows: the vertices of $\G^A_n, \G^B_n$ are vertices of $\G_n$ that are connected to $S_n$ with edges defined by adjacency in $A_{n+1}, B_{n+1}$, respectively. The graphs $\G^A_n$, $\G^B_n$ both contain $C_n$ (Lemma~\ref{2.11KL}).

For $n$ such that $C_n$ is the end vertex of both $\G^A_n$ and $\G^B_n$, 
we define the concatenation $\G^A_n\bowtie \G^B_n$ of two graphs $\G^A_n, \G^B_n$. (See Definition~\ref{def:sum} for details.)

 If $C_n \neq S_n$, we define the vertex set to be $V(\G^A_n\bowtie \G^B_n)=V\G^A_n\cup V\G^B_n$ where the end vertices $C_n$ in $\G^A_n$ and $C_n$ in $\G^B_n$ are identified. The edge set is $E(\G^A_n\bowtie \G^B_n)=E\G^A_n\cup E\G^B_n, $ where the loops at $C_n$ in $\G^A_n$ and $\G^B_n$ are identified in $\G^A_n\bowtie \G^B_n.$
 
 If $C_n = S_n$, then define $V(\G^A_n\bowtie \G^B_n)=V\G^A_n\sqcup V\G^B_n$ where $C_n=S_n$ in $\G^A_n$ and $\G^B_n$ remain two distinct vertices in $\G^A_n\bowtie \G^B_n$.
 Define $E(\G^A_n\bowtie \G^B_n)=E\G^A_n\sqcup E \G^B_n \cup \{e\}$, where $e$ is the edge between two end vertices $C_n$ in $\G^A_n$ and $\G^B_n$.

Our first main theorem is the following induction algorithm, an analog of continued fraction algorithm described in \eqref{eqn:induction} and \eqref{ukvk}.

\begin{theorem}\label{thm:main} Let $\phi$ be a Sturmian coloring.
\begin{enumerate}
\item
If $\phi$ is such that $\G_n$ does not have any cycle for all $n$, then
there exists $\NN \in [0,\infty]$ and a sequence $(n_k)_{k \ge 0}$ such that 
$n_k =k$ for $0 \le k \le \NN$ and 
\begin{align*}
&\G^A_{n} \cong \G^A_{n-1}, \  \G^B_{n} \cong \G^A_{n-1} \bowtie \G^B_{n-1}, &\text{if} \ 0 \le  n < K, \\ 
\G^A_{n} \cong \G^A_{n-1} {\bowtie} \; \G^B_{n-1}, \  & \G^B_{n} \cong \G^A_{n-1}  \bowtie \G^B_{n-1} \ \text{ or } \ \G^A_{n} \cong \G^A_{n-1}   \bowtie \G^B_{n-1}, \  \G^B_{n} \cong  \G^B_{n-1}, &\text{if} \ n= K, \\ 
&\G^A_{n} \cong \G^A_{n-1}, \;\; \G^B_{n} \cong \G^B_{n-1},  &\text{if} \;\;n \neq n_k, n > K, \\ 
\G^A_{n} \cong \G^A_{n-1}  \bowtie \G^B_{n-1}, \; & \G^B_{n} \cong \G^B_{n-1} \ \text{ or } \ \G^A_{n} \cong \G^A_{n-1}, \  \G^B_{n} \cong \G^A_{n-1}  \bowtie  \G^B_{n-1},  &\text{if} \ n= n_k, \ n > \NN. 
\end{align*}
\item If $\phi$ is such that $\G_n$ has a cycle, for some $n$, then $\phi$ is of bounded type. The coloring $\phi$ is of bounded type if and only if either $\G^A_n$ or $\G^B_n$ eventually stabilizes. 
\end{enumerate}
\end{theorem}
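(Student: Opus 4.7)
The proof proceeds by induction on $n$, analyzing how the triple $(\G_n, \G^A_n, \G^B_n)$ evolves when $n$ increases by one. Since $b_n(\phi) = n+2$ for all $n$, exactly one new vertex appears when passing from $\G_n$ to $\G_{n+1}$: the special class $S_n$ is replaced by its two distinct $(n+1)$-extensions $A_{n+1}$ and $B_{n+1}$, whereas every other class has a unique $(n+1)$-extension. A first key observation is that the new special ball $S_{n+1}$ must be one of $A_{n+1}, B_{n+1}$, since any class whose concentric $n$-ball is non-special automatically has a unique $(n+2)$-extension and therefore cannot itself be special. This dichotomy is what underlies the two alternatives displayed in each line of part (1).

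Next I would transport the subgraph structure across the bifurcation. Away from $S_n$ the graph $\G_n$ is locally isomorphic to the quotient $X_\phi$, so neighborhoods of vertices other than $S_n$ lift canonically into $\G_{n+1}$, and the pieces $\G^A_n$ and $\G^B_n$ become the neighborhoods of $A_{n+1}$ and $B_{n+1}$. The concatenation $\bowtie$ then encodes exactly how these splice to form $\G^A_{n+1}$ or $\G^B_{n+1}$: when $C_n \neq S_n$ the two pieces share their common end vertex $C_n$ and are identified there, whereas when $C_n = S_n$ they are joined by a single new bridging edge. A case analysis on (i) which of $A_{n+1}, B_{n+1}$ is $S_{n+1}$, (ii) whether $C_n = S_n$, and (iii) the position of $C_{n+1}$, yields the four regimes displayed in the theorem. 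The range $n < K$ corresponds to $C_n = S_n$ persisting, so only one of the subgraphs absorbs a concatenation at each step; the index $K$ records the first $n$ with $C_n \neq S_n$; for $n > K$ with $n \neq n_k$ both subgraphs change only by trivial isomorphism; and the indices $n_k$ are precisely the values where the concatenation is triggered anew.

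For part (2), if $\G_n$ contains a cycle for some $n$, then since $\G_n$ locally embeds into the edge-indexed quotient $X_\phi$ everywhere except possibly at $S_n$, such a cycle forces $X_\phi$ to contain a cycle; by Theorem~\ref{thm:1.1} this rules out the biinfinite line case, so $X_\phi$ is the infinite ray and $\phi$ is of bounded type. Conversely, bounded type means every vertex's type set is finite, so by Theorem~\ref{thm:1.1} there is an index beyond which no further refinements appear on one side of the ray; this translates into stabilization of one of $\G^A_n, \G^B_n$, while eventual stabilization of either subgraph clearly implies the type sets are uniformly bounded. The principal obstacle will be the fine case analysis in part (1), especially at the transition index $n = K$ where the definition of $\bowtie$ changes form as $C_n$ detaches from $S_n$, and verifying that the four displayed regimes are both exhaustive and mutually consistent across the recursion.
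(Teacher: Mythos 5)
Your proposal's ``first key observation'' --- that $S_{n+1}$ must be one of $A_{n+1},B_{n+1}$ because a class whose concentric $n$-ball is non-special ``automatically has a unique $(n+2)$-extension'' --- is false, and the inference is a non sequitur: the unique-extension property of a non-special $n$-ball $\underline{D}$ controls the passage from $\underline{D}$ to $D$, not the $(n+2)$-extensions of $D$. The paper's own running example refutes the claim: there $S_1=\overline{B_0}$ with $B_0\ne S_0$, and $S_2=\overline{B_1}$ is distinct from both $A_2$ and $B_2$. Since you present this dichotomy as what ``underlies the two alternatives displayed in each line of part (1),'' the core of your argument collapses. The actual trigger for a concatenation step (the definition of $n_k$) is that one of $A_n,B_n$ coincides with $S_n$ \emph{or} $C_n$, and the paper's case analysis (Theorem~\ref{thm:4p} feeding into Theorem~\ref{thm:4}) is driven by the positions of both $S_{n+1}$ and $C_{n+1}$ relative to $A_{n+1},B_{n+1}$, together with the adjacency and index lemmas (Lemma~\ref{2.11KL}, Lemma~\ref{newlem}, Lemma~\ref{lem:merged}, Lemma~\ref{lem:index}, Lemma~\ref{lem:conf}) that pin down which classes can neighbor $S_n$ and verify that the edge indices match those of the concatenation. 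None of that quantitative content appears in your sketch, so even setting aside the false observation, the claim that the displayed regimes are exhaustive is not established.

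Part (2) contains a second genuine error: a cycle in $\G_n$ does \emph{not} force a cycle in $X_\phi$. By Theorem~\ref{thm:1.1} the quotient $X_\phi$ is always a ray or a biinfinite line with loops, hence never contains a cycle of length $>1$, yet cyclic Sturmian colorings exist (the paper exhibits one in Section~\ref{sec:5}); your argument would prove they cannot. The point is that the natural map $VX_\phi\to V\G_n$ is many-to-one (a closed path in $\G_n$ lifts to a path in $X_\phi$ returning only to a vertex with the same $n$-ball class), and the local isomorphism with $X_\phi$ fails precisely at $S_n$, which every cycle of $\G_n$ must contain (Lemma~\ref{lem:cyclic}). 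The paper instead shows (Lemmas~\ref{lem:graphs}, \ref{lem:graphs2}, Proposition~\ref{prop:circular}) that the cycle eventually becomes the entire underlying graph of $\G^A_m$ (say), which then stabilizes, so that vertices whose $n$-balls lie in $\G^B_n\setminus\G^A_n$ are never centers of special balls; this yields bounded type. Your treatment of the ``if and only if'' is likewise only an assertion; the paper's proof of Theorem~\ref{thm:bounded1} requires Proposition~\ref{lem:nk}, which locates the centers of the special balls among the vertices of $\G^{\overline{\alpha_k}}_{n_k}$, in both directions of the equivalence.
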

Part (1) is proved in Theorem~\ref{thm:4} and part (2) is proved in Proposition~\ref{prop:circular} and Theorem~\ref{thm:bounded1}.
Note that the graphs $\G^A_n, \G^B_n$ has canonical structure of edge-indexed graphs (see Definition~\ref{def:gnab}). Theorem~\ref{thm:4} is more precise than part (1) of the above theorem in the sense that the concatenation we define in Section~\ref{section:wocycle} include the vertices $V,V'$ where the concatenation occurs and edge-indices $\mathbf{i}_k$ of edges around $V,V'.$

\subsection{Direct limit as the inverse process} In the second part of the article, we construct an inverse process of induction algorithm described in Theorem~\ref{thm:main}.

We first define the $\alpha_k$-admissible sequences of indices $\mathbf{i}_k$: $\alpha_k$ is an arbitrary sequence of $A$ and $B$ and $\mathbf{i}_k$ is defined so that the universal cover of the concatenated edge-indexed graph has degree $d$ (see Definition~\ref{def:4.1}). 
We then define $\alpha_k$-admissible $\beta_k$ in \eqref{eqn:4.2}, which is a sequence of $A,B$ closely related to $\alpha_k$. Two sequences $\beta_k, \beta'_k$ are \emph{equivalent} if they are eventually equal. Our main result in the second part of the article is that there is a one-to-one correspondence between Sturmian colorings and equivalence classes of admissible sequences:
$$ \{ (X, \phi) \text{ Sturmian colorings }  \} \ \overset{\Phi}{\underset{\Psi}{\rightleftarrows}}\  \{ (\alpha_k, \mathbf{i}_k, [\beta_k]): \mathbf{i}_k , \beta_k \text{ are } \alpha_k\text{-admissible } \},$$
where $[\beta_k]$ is the equivalence class of $\beta_k$.

\subsubsection{ $\Psi$ : a direct limit} For any given sequence $\alpha_k$ and $\alpha_k$-admissible sequence of indices $\mathbf{i}_k$, we define $\F^A_k, \F^B_k$ recursively using concatenations with edge-indices $\mathbf{i}_k$.
For an $\alpha_k$-admissible $\beta_k$, we first show that the direct limit of $\F^{\beta_k}_{k}$ is the quotient graph $(X,\phi)$ of a Sturmian coloring $\phi$ (Theorem~\ref{thm:6}), i.e. $\Psi$ is well-defined. 
If $\phi$ is of bounded type, the sequence $\beta_k$ has a further restriction: $\beta_k$ is eventually constant (see Proposition~\ref{prop:circular}).

\subsubsection{$\Phi$ : the data of vertices and edge-indices in the induction algorithm.}\label{ak} For a given Sturmian coloring $\phi$, we define $\alpha_k, \mathbf{i}_k$ and $\beta_k=\beta_k(t)$ for each $t\in VX_\phi$: 
$(\alpha_k)$ is the sequence of letters defined by $\alpha_k =A$ if $|V\G_{n_k}^A| \ge |V\G_{n_k}^B|$ and $\alpha_k =B$ otherwise. The vector $\mathbf{i}_k$ is the new indices that appear in the concatenation: for example, $\mathbf{i}_k = i$ and $(i,j)$ respectively, in the two figures below (see Definition~\ref{def:4.8}).

\begin{center}
\begin{tikzpicture}[every loop/.style={}]
  \tikzstyle{every node}=[inner sep=-1pt]
  \node (10) at (8,0) {$\cdots$ \ };
  \node (11) at (9,0) {$\circledcirc$} ;
  \node (12) at (10,0) {$\bullet$} node [below=5pt] at (10,0) {$S_{n+1}$} node [below = 25 pt] at (10,0) {$\G^A_n \bowtie \G^B_n$ };
  \node (13) at (11,0) {$\circledast$}; 
  \node (14) at (12,0) {\ $\cdots$};
  \path[-] 
	(10) edge (11)
	(13) edge (14)
	(11) edge node [above=4pt] {\quad $i$} (12)
	(12) edge node [above=4pt] {\quad $m-i$ \quad} (13);
  \path[-] (12) edge [loop above] node [above=3pt,left=4pt] {$\ell$} (12);
\end{tikzpicture} 
\qquad 
\begin{tikzpicture}[every loop/.style={}]
  \tikzstyle{every node}=[inner sep=-1pt]
  \node (10) at (8,0) {$\cdots$\ };
  \node (11) at (9,0) {$\circledcirc$};
  \node (12) at (10,0) {$\bullet$} node [below=5pt] at (10,0) {$A_{n+1}$} node [below = 25 pt] at (10.5,0) {$\G^A_n \bowtie \G^B_n$ };

  \node (13) at (11,0) {$\bullet$} node [below=5pt] at (11,0) {$B_{n+1}$};
  \node (14) at (12,0) {$\circledast$};
  \node (15) at (13,0) {\ $\cdots$};
  \path[-] 
	(10) edge (11)
	(14) edge (15)
	(12) edge node [above=4pt] {$i \quad j$} (13)
	(11) edge node [above=4pt] {\quad $m_1$ \;\;} (12)
	(13) edge node [above=4pt] {$m_2$ \quad} (14);
  \path[-] (12) edge [loop above] node [above=5pt] {$\ell_1-i$} (12);
  \path[-] (13) edge [loop above] node [above=5pt] {$\ell_2-j$} (13);
\end{tikzpicture} 
\end{center}

We show that $\Phi$ is well-defined i.e.  $\mathbf{i}_k$, $\beta_k$ are $\alpha_k$-admissible and distinct $t,t' \in VX_\phi$ gives eventually equal sequences $\beta_k(t), \beta_k(t')$.
The next theorem shows that $\Psi \circ \Phi = \mathrm{Id}.$
\begin{theorem}\label{thm1.3}
Let $\phi$ be a Sturmian coloring. 
\begin{enumerate} 

\item For a sequence $\alpha_k \in \{A, B\}$, if $\mathbf{i}_k$ and $\beta_k$ are $\alpha_k$-admissible, then the direct limit of $\F^{\beta_k}_k$ is a Sturmian coloring. 
Furthermore, the sequence induced from the direct limit $\F^{\beta_k}_k$  is $(\alpha_k, \mathbf{i}_k, \beta'_k)$ such that $\beta_k$ and $\beta'_k$ are eventually equal.
\item If $\phi$ is such that $\G_n$ does not have any cycle, for all $n$,
then there are $(\alpha_k)$, $(\alpha_k)$-admissible indices $(\mathbf{i}_k)$ and an $\alpha_k$-admissible sequence $\beta_k$ such that the direct limit $\underset{k \to \infty}{\lim} \G^{\beta_k}_{n_k}$ of $\G^{\beta_k}_{n_k}$ is the original Sturmian coloring $(X, \phi)$.

\item If $\phi$ is such that $\G_n$ has a cycle, then it is a coloring of bounded type.
If $\phi$ is of bounded type, then $(X, \phi)=\varinjlim \G^*_n$ 
where $*=A$ or $B$.
\end{enumerate}
\end{theorem}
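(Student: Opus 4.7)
My plan is to deduce all three parts from two tools that are established earlier in the paper: the induction algorithm of Theorem~\ref{thm:main} and the direct-limit construction $\Psi$ of Theorem~\ref{thm:6}. Part (1) says that $\Psi$ is well-defined and that $\Phi\circ\Psi$ is the identity on equivalence classes of admissible triples; parts (2) and (3) together give $\Psi\circ\Phi=\mathrm{Id}$, split according to whether or not $\G_n$ has a cycle.

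For part (1), Theorem~\ref{thm:6} already gives that $\varinjlim\F^{\beta_k}_k$ is a Sturmian coloring $(X,\phi)$, so what remains is to identify the triple that $\Phi$ associates to $(X,\phi)$ with the input triple, up to $\beta$-equivalence. The evolution of $\G^A_{n_k},\G^B_{n_k}$ under the induction algorithm is dictated by the same concatenation structure that built the $\F^{A,B}_k$, so the majority rule of Section~\ref{ak} returns $\alpha_k$ from $|V\F^{A,B}_k|$, and the new edge indices at the concatenation vertices singled out in Theorem~\ref{thm:4} recover $\mathbf{i}_k$ verbatim. For $\beta'_k$ I fix a base vertex $t\in VX_\phi$ and track which side of the concatenation it lies in at each stage; once $t$ has entered the direct-limit construction it stays on the side prescribed by $\beta_k$, so $\beta'_k=\beta_k$ for all sufficiently large $k$.

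For part (2), starting from a Sturmian $\phi$ with no cycle in any $\G_n$, I apply Theorem~\ref{thm:main}(1) to extract the structural indices $(n_k)$ and the concatenation evolution of $\G^A_n,\G^B_n$. Set $\alpha_k$ by the majority rule, let $\mathbf{i}_k$ be the tuple of new edge indices at the concatenation vertex as in the figures of Section~\ref{ak}, and for a fixed $t\in VX_\phi$ define $\beta_k(t)\in\{A,B\}$ by which subgraph contains $t$. Admissibility of $\mathbf{i}_k$ reduces to local $d$-regularity of the universal cover at the concatenation vertex, and admissibility of $\beta_k(t)$ is immediate from the definition. The identification $\varinjlim\G^{\beta_k(t)}_{n_k}=(X_\phi,\phi)$ then follows by exhaustion: every vertex and every edge of $X_\phi$ eventually enters $\G^{\beta_k(t)}_{n_k}$, and the edge-index data is preserved at each inductive step.

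For part (3), the first implication is Proposition~\ref{prop:circular}. For the converse, bounded type together with Theorem~\ref{thm:1.1} forces $X_\phi$ to be a ray (with possibly loops), and Theorem~\ref{thm:main}(2) gives stabilization of one of $\G^A_n,\G^B_n$. Local inspection of the induction step at the far end of the ray shows that the non-stabilizing sequence grows by attaching one new vertex (and possibly a loop) at each $n$, so its direct limit is exactly the ray $(X_\phi,\phi)$. The main technical obstacle is in part (2): one must track the base vertex and the new edge indices through every concatenation, verifying that the identifications in the direct limit recover $X_\phi$ not just as a colored graph but as an edge-indexed graph with the correct universal cover.
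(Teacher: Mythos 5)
Your plan for parts (2) and (3) follows the paper's own route (Definition~\ref{def:4.8}, Lemmas~\ref{lem:4new}--\ref{lem:8}, Theorem~\ref{thm:6} for part (2); Proposition~\ref{prop:circular} and Theorem~\ref{thm:bounded1} for part (3)), though at a coarse level: the ``exhaustion'' step in part (2) is exactly where the paper needs Lemma~\ref{lem:8} to guarantee that a fixed base vertex is eventually \emph{not} an end vertex of $\G^{\beta_k}_{n_k}$, so that adjacency and edge-indices are faithfully transported into the limit, and injectivity of the map $VX \to V\varinjlim\G^{\beta_k}_{n_k}$ rests on the fact that distinct vertices of the concatenated graph carry distinct $n_k$-ball classes. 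You gesture at both points but do not supply them.

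The genuine gap is in part (1). You write that Theorem~\ref{thm:6} ``already gives that $\varinjlim\F^{\beta_k}_k$ is a Sturmian coloring.'' It does not: Theorem~\ref{thm:6} is the statement $\Psi\circ\Phi=\mathrm{Id}$, which \emph{starts} from a given Sturmian coloring $\phi$, builds $\F^A_k=\G^A_{n_k}$, $\F^B_k=\G^B_{n_k}$ from it, and shows the direct limit recovers $X_\phi$. It says nothing about an arbitrary $\alpha_k$-admissible triple $(\alpha_k,\mathbf{i}_k,\beta_k)$, which need not a priori lie in the image of $\Phi$ --- establishing that it does (i.e.\ that $\Psi$ is well defined) is precisely the content of part (1). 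The paper's actual argument is the inductive Lemma~\ref{lem4.4}: distinct vertices of $\F^A_{k-1}\bowtie\F^B_{k-1}$ have distinct $n_k$-ball classes in the universal cover, and vertices coming from $\F^{A}_{k-1}$ or $\F^B_{k-1}$ keep their $n_k$-ball class; combined with Lemma~\ref{lem4.6} this yields $|\mathbf{B}_{n_k}(\varphi)|=|V\F^{\alpha_k}_k|=n_k+2$ along the subsequence where $\beta_k=\alpha_k$ (Proposition~\ref{Prop4.7}), whence minimal unbounded complexity. None of this counting appears in your proposal, so the central claim of part (1) is assumed rather than proved. (A further point you would need to address: Proposition~\ref{Prop4.7} requires $\beta_k=\alpha_k$ infinitely often, which is not automatic for an admissible $\beta_k$ when $\alpha_k$ stabilizes; the paper handles the residual case through the bounded-type analysis of Section~\ref{sec:5}.) Your treatment of the ``furthermore'' clause --- tracking which side of each concatenation the base vertex lies on --- is in the spirit of Definition~\ref{def:4.8}(2) and Lemma~\ref{lem:4new} and is acceptable once the main claim is in place.
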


Part (1) is proved in Theorem~\ref{thm:6} and part (2) is proved in Theorem~\ref{thm:6 }. Part (3) is proved in Theorem~\ref{thm:bounded1} (2).
The colorings have the same classes of balls if and only if the sequences $(\alpha_k, \mathbf{i}_k)$ are the same.
In this regard, the sequences $(\alpha_k, \mathbf{i}_k)$ correspond to partial quotients of the slope of a Sturmian word and the sequence $\beta_k$ corresponds to the intercept of a Sturmian word. 
 

The article is organized as follows. 
In Section~\ref{section:lemmas}, we gather preliminary facts about colored balls, define the graph $\G_n$ and the edge-indexed graphs $\G^A_n$, $\G^B_n$. 
In Section~\ref{section:wocycle}, we study colorings for which $\G_n$ does not have a cycle for all $n$ and show induction algorithm (Theorem~\ref{thm:main}). 
In Section~\ref{section:4}, we define $\alpha_k$-admissible sequences $\mathbf{i}_k$ and $\beta_k$ and show Theorem~\ref{thm1.3} (1), (2).
In Section~\ref{sec:5}, cyclic Sturmian colorings are treated and we investigate Sturmian colorings of bounded type further to show Theorem~\ref{thm1.3} (3). 

\section{Graphs of colored balls}\label{section:lemmas}

Let $\T$ be a $d$-regular tree, i.e. the degree of each vertex is $d$.
Let $V\T, E\T$ be the set of vertices and the set of oriented edges of $\T$, respectively. The group $Aut(T)$ of automorphisms of $\T$ is a locally compact topological group with compact-open topology.
Consider the path metric $d$ on $\T$ with edge length all equal to $1$. 
The (closed) \emph{$n$-ball around $x$} is defined by $\N_n(x) = \{ y \in \T : d(x,y) \leq n \}$.

Throughout the paper, $\phi :V\T \to \A$ is a Sturmian coloring, i.e., a coloring of factor complexity $b_n(\phi) = n+2$.  
Since $b(0)=2$, $ \mathcal A$ has two elements.
Set $\mathcal A = \{ a, b\}$.  
\subsection{Preliminary : basic properties of Sturmian colorings} In this subsection, we recall preliminary facts from \cite{KL1} and prove  basic properties of Sturmian colorings, mostly about various adjacencies of $n$-balls in $(T,\phi)$.  
\begin{definition}\label{def:subball} For a Sturmian coloring $\phi $ on $T$, denote the colored tree by  $\T_\phi$. 
\begin{enumerate}
\item
Two vertices $x$ and $y$ are called \textit{congruent} if there exists a color-preserving automorphism of $\T_\phi$ sending $x$ to $y$.

\item Two $n$-balls $\N_n(x), \N_n(y)$ are called \textit{equivalent} if there exists a color-preserving isomorphism of $n$-balls between them. Such an equivalence class is called \textit{a class of $n$-balls} and is denoted it either by an $n$-ball with brackets, e.g. $[\N_n(x)]$, or by capital alphabets, e.g. $A_n, B_n, C_n, D, E$.
\item For two classes $D, E$ of balls, $D$ is \emph{(always) adjacent to} $E$ if for any $n$-balls $\N_n(x)$ in the class $D$, there exists an $n$-ball $\N_m(y)$ in the class $E$ such that $d(x,y)=1$. Two classes $D,E$ are \emph{(always) adjacent} if $D$ is adjacent to $E$ and vice versa.
\item Two classes $D,E$ of balls \emph{can be adjacent} if $D=[\N_n(x)], E=[\N_m(y)]$ for \emph{some} balls $\N_n(x), \N_m(y)$ in $T_\phi$ such that $d(x,y)=1$. 
\item A class of $n$-balls is called \textit{admissible} if it appears in $\T_\phi$.
Let $\mathbf{B}_n(\phi)$ be the set of admissible classes of $n$-balls. 
As defined in the introduction, $b_n(\phi) = | \mathbf{B}_n(\phi)|$. \end{enumerate} 
\end{definition}
 We will omit  the word ``always" in part (3) when there is no confusion. 
\begin{remark}Remark that if two classes $C, D$ of $n$-balls are both not special, then various kinds of adjacencies in part (3) and (4) above are equivalent. The only subtle situation is when one of them is $S_n$: for a class $D$, it is possible that $S_n, D$ can be adjacent, $D$ is always adjacent to $S_n$ and $S_n$ is not always adjacent to $D$.\end{remark}

Recall that $S_n$ is the unique ball contained in two distinct classes of $(n+1)$-balls $A_{n+1}, B_{n+1}$, and $C_n$ is the central $n$-ball of $S_{n+1}$.
For a class of $n$-balls $B=[B_n(x)]$, denote the class of $[B_{n-1}(x)]$ by $\underline{B}$ and call it \emph{the restriction of} $B$.
One of the most basic properties of a Sturmian coloring is that for any non-special class of $n$-balls $B$, there is a unique class of $(n+1)$-balls containing $B$ concentrically, which we denote by $\overline{B}$ and call \emph{the extension of} $B$.
For the notational simplicity, we denote the empty ball by $S_{-1} = A_{-1} = C_{-1}$. Note that $B_{-1}$ is not defined. 


\begin{lemma}\label{2.11KL}
For a Sturmian coloring $\phi$, without loss of generality, we assume that $S_0=A_0=[a]$ and $B_0=[b]$.
\begin{enumerate}
\item 
We can choose $\{A_n\},  \{B_n\}$ so that $A_{n+1}, B_{n+1}$ are always adjacent to $A_n, B_n$, respectively. 
Moreover, $A_{n+1}, B_{n+1}$ are uniquely determined if we impose the condition that $A_{n+1}$ contains balls of class $A_n$ more than $B_{n+1}$ does.
\item
For each $n$-ball $\N_n(x)$, 
 the $n$-balls adjacent to $\N_n(x)$ belong to at most two classes of $n$-balls apart from $[\N_n(x)]$. 
Thus for any class of $n$-balls $D \neq S_n$, there are at most two classes of $n$-balls adjacent to $D$.
\item
If $A_{n} \ne S_{n}$ ($B_{n} \ne S_{n}$), then $A_{n}$ ($B_{n}$, respectively) is always adjacent to $S_{n}$.

\item The classes $S_n, C_n$ are always adjacent.

\end{enumerate}
\end{lemma}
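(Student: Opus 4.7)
The common thread in all four parts is the identity $|\mathbf B_{n+1}(\phi)|-|\mathbf B_n(\phi)|=1$, which forces every class of $n$-balls other than $S_n$ to admit a unique extension $\overline D$ to an $(n+1)$-ball, while $S_n$ has exactly the two extensions $A_{n+1},B_{n+1}$. Consequently, the class of an $n$-ball centered at a neighbor of the center of an $(n+1)$-ball $E$ is determined by $E$, so all branching in adjacencies of $n$-balls is localized at $S_n$. Throughout I would use a simple reversal trick: if $\overline{[\N_n(x)]}$ contains $\N_n(y)$ of class $E$ at a vertex $y$ adjacent to $x$, then reading the same picture from $y$ shows that $\overline{E}$ contains a ball of class $[\N_n(x)]$ at a neighbor of $y$.

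For part (1), I would induct on $n$ starting from $A_0=[a]=S_0$, $B_0=[b]$. In the inductive step, if $A_n\neq S_n$, set $A_{n+1}:=\overline{A_n}$, which is automatically always adjacent to $A_n$, and analogously for $B_{n+1}$ when $B_n\neq S_n$; if $A_n=S_n$, the two candidate extensions $A_{n+1},B_{n+1}$ are both centered on an $A_n$-ball so adjacency is automatic, and the auxiliary counting condition (more $A_n$-subballs on one side) is precisely what disambiguates the labels. For part (2), the extension $\overline D$ of a non-special class $D$ fixes the classes of the $n$-balls at all $d$ neighbors of its center, and the number of distinct classes among them equals the degree of $D$ in $\G_n$, up to the loop at $D$ itself; the bound of $2$ then follows either from a direct counting argument using that $S_n$ is the unique ``branching'' class, or by identifying the local structure of $\G_n$ away from $S_n$ with the quotient graph $X_\phi$, which by Theorem~\ref{thm:1.1} is a ray or a line with possible loops and hence has non-loop degree at most $2$ at each vertex.

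For parts (3) and (4), the reversal trick is decisive. In (3): by part (1), $A_{n+1}$ contains $S_n$ at its center and an $A_n$-ball at some neighbor $y$, so $\overline{A_n}$ (viewed from $y$) contains an $S_n$-ball at a vertex adjacent to its center; since $A_n\neq S_n$ has a unique extension, every $A_n$-ball is therefore adjacent to an $S_n$-ball. For (4), since $S_{n+1}$ has two distinct $(n+2)$-extensions $A_{n+2},B_{n+2}$, this branching must come from a neighbor of its center whose $n$-ball is $S_n$ (because only $S_n$ has two $(n+1)$-extensions); thus $S_{n+1}$ contains an $S_n$-ball at distance $1$ from its center, which has class $C_n$, and both adjacency directions follow by a reversal argument together with the uniqueness of $\overline{C_n}$ when $C_n\neq S_n$. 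The main obstacle I foresee is the degenerate case $C_n=S_n$ in (4): here exactly one of $A_{n+1},B_{n+1}$ equals $S_{n+1}$, and I would verify separately that the other extension of $S_n$ still contains an $S_n$-ball at distance $1$, by exploiting the Sturmian constraint that the distance-$1$ classes appearing in $A_{n+1}$ and $B_{n+1}$ differ in a controlled way (both must realize the adjacencies $[S_n,A_n]$ and $[S_n,B_n]$ of part (3)).
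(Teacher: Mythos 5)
Your parts (2) and (3) are essentially the paper's arguments (reduction to Theorem~\ref{thm:1.1} for the bound of two adjacent classes, and the ``can be adjacent $\Rightarrow$ always adjacent'' upgrade via uniqueness of the extension of a non-special class), so I will focus on (1) and (4). Part (1) contains a genuine error: by definition $A_{n+1}$ and $B_{n+1}$ are the two classes of $(n+1)$-balls whose \emph{concentric restriction is $S_n$}, and the content of part (1) is only how to label these two classes consistently with the labels at level $n$. Your construction ``$A_{n+1}:=\overline{A_n}$ when $A_n\neq S_n$'' produces a class restricting to $A_n\neq S_n$, so it is not one of the two candidates at all; and in the case $A_n=S_n$ you argue that both extensions are ``centered on an $A_n$-ball so adjacency is automatic,'' but adjacency requires an $A_n$-ball at distance exactly $1$ from the center, not at the center. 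The substantive fact to be proved --- that of the two extensions of $S_n$, one always contains an $A_n$-ball adjacent to its center and the other always contains a $B_n$-ball adjacent to its center --- is nowhere established in your argument. The paper imports exactly this from Lemma 2.11 of \cite{KL1} and then proves only the uniqueness claim, via the identity $i(A_{n+1},A_n)+i(A_{n+1},B_n)=i(B_{n+1},A_n)+i(B_{n+1},B_n)$ (both sides count the $S_{n-1}$-balls adjacent to $S_n$) together with uniqueness of extensions of the non-special neighbours.

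In part (4) your route is genuinely different from the paper's, but it has two gaps. First, the claim that the branching of $S_{n+1}$ into $A_{n+2},B_{n+2}$ ``must come from a neighbour whose $n$-ball is $S_n$'' presupposes that an $(n+2)$-ball is determined by its central $(n+1)$-ball together with the classes of the $(n+1)$-balls at the neighbours of its center; this gluing statement is true in this setting but is itself a lemma of \cite{KL1}, not a formal consequence of ``only $S_n$ has two extensions.'' Second, even granting it, your reversal only yields that $S_n$ \emph{can} be adjacent to $C_n$; since $S_n$ is special this does not upgrade to ``always adjacent,'' and you explicitly leave the case $C_n=S_n$ open. The paper closes both points by applying parts (1) and (3) one level up: if $C_n\neq S_n$ then $A_{n+1},B_{n+1}\neq S_{n+1}$, so both are always adjacent to $S_{n+1}$, and hence every occurrence of $S_n$, whichever of its two extensions it has, sees a ball restricting to $C_n$; if $C_n=S_n$, say $A_{n+1}=S_{n+1}$, one uses that $A_{n+2},B_{n+2}$ are always adjacent to $A_{n+1},B_{n+1}$ respectively and that $B_{n+1}$ is always adjacent to $S_{n+1}$, and both $A_{n+1},B_{n+1}$ restrict to $S_n=C_n$. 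Some argument of this kind is needed to get the ``always'' in the statement.
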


\begin{proof}
\begin{enumerate}
\item Lemma 2.11 of \cite{KL1} for $m=1$ says that 
there are two balls $C,D$ such that
$A_{n+1}, B_{n+1}$ are always adjacent to $C,D$, respectively and $\{ C, D \}=\{A_n, B_n\}$. 
Thus we may choose $A_n, B_n$ so that $A_{n+1}, B_{n+1}$ are always adjacent to $A_n, B_n$, respectively.
The numbers of adjacent balls of class $A_n$ and $B_n$ to each $A_{n+1}$ or $B_{n+1}$  are constants since the balls of $A_n$, $B_n$ are contained in $A_{n+1}$ or $B_{n+1}$.
To show the uniqueness, denote the number of $A_n, B_n$ adjacent to $A_{n+1}$, $B_{n+1}$ by $i(A_{n+1}, A_n), i(A_{n+1}, B_n), i(B_{n+1}, A_n), i(B_{n+1}, B_n),$ respectively. Remark that
$$ i(A_{n+1}, A_{n}) + i(A_{n+1}, B_{n}) = i(B_{n+1}, A_{n})+ i(B_{n+1}, A_{n}),$$
since they are both equal to the number of $S_{n-1}$ adjacent to $S_n$. If $i_A(A_n, A_{n+1})=i_B(A_n, A_{n+1})$, then $A_{n+1}=B_{n+1}$ since all the $n$-balls distinct from $S_{n-1}$ adjacent to $S_n$ have unique extensions to $(n+1)$-balls.

\item It is clear that the centers of distinct classes of $n$-balls are not congruent, and there are at most two congruence classes of vertices adjacent to any given vertex by Theorem~\ref{thm:1.1}. Any $D \neq S_n$ has the same set of classes of colored balls in its 1-neighborhood, thus there are at most two classes that can be adjacent to $D$.
\item By part (1), 
$A_n$ can be adjacent to $\underline{A_{n+1}}=S_n$. Thus if $A_n \neq S_n$, $A_n$ is always adjacent to $S_n$.
\item 
If $S_n \ne C_n$, then $A_{n+1} \neq S_{n+1}$ and $B_{n+1} \neq S_{n+1}$, thus by part (3), both $A_{n+1}$ and $B_{n+1}$ are always adjacent to $S_{n+1}$, thus $S_n$ is always adjacent to $C_n$. 

If $S_n = C_n$, we may assume that $A_{n+1} = S_{n+1} \neq B_{n+1}$.
By part (1), $A_{n+2}$ and $B_{n+2}$ are always adjacent to $A_{n+1}$ and $B_{n+1}$ respectively, thus $S_{n+1} = A_{n+1}$ is always adjacent to either $A_{n+1}$ or $B_{n+1}$.
On the other hand, by part (3) $B_{n+1}$ is always adjacent to $S_{n+1}=A_{n+1}$.
Therefore, $S_n$ is always adjacent to $\underline{A_{n+1}} =\underline{B_{n+1}}=S_n = C_n$.

\end{enumerate}
\end{proof}

\begin{lemma}\label{newlem} Let $\phi$ be a Sturmian coloring.
\begin{enumerate}
\item
Suppose that $D$ is not equal to any of $A_n$, $B_n$, $S_n$. If $S_n, D$ can be adjacent, then $S_n, D$ are always adjacent. 
\item If $D$ in part (1) satisfies $D \neq C_n$, then $S_n \ne C_n$.

\item Apart from $S_n$ itself, there are at most three classes of $n$-balls which can be adjacent to $S_n$.
\end{enumerate}
\end{lemma}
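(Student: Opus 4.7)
I will address the three parts of Lemma~\ref{newlem} in order.

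\emph{Part (1).} I first extract vertices $x,y \in V\T$ from the ``can be adjacent'' hypothesis, so that $\mathcal{B}_n(x) \in S_n$, $\mathcal{B}_n(y) \in D$ and $d(x,y)=1$. As $D \neq S_n$, the class $D$ has a unique extension $\overline{D}$ to an $(n+1)$-ball class, and $\mathcal{B}_{n+1}(y) \in \overline{D}$ contains the $S_n$-ball $\mathcal{B}_n(x)$ at distance $1$ from its centre. Homogeneity of the class $\overline{D}$ gives directly that every $D$-ball has an $S_n$-neighbour, and hence $D$ is always adjacent to $S_n$. For the reverse direction, $D \neq A_n, B_n$ forces $\overline{D} \neq A_{n+1}, B_{n+1}$, because $\underline{A_{n+1}} = \underline{B_{n+1}} = S_n \neq D = \underline{\overline{D}}$. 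Write $\mathcal{N}_A, \mathcal{N}_B$ for the sets of $n$-ball classes appearing as neighbours of the centre in an $A_{n+1}$- or $B_{n+1}$-ball respectively. I aim to show $D \in \mathcal{N}_A \cap \mathcal{N}_B$. If $D = C_n$, this is immediate from Lemma~\ref{2.11KL}(4): since $S_n$ is always adjacent to $C_n$ and the centre of any $A_{n+1}$- or $B_{n+1}$-ball lies in $S_n$, both centres have $C_n$-neighbours. If $D \neq C_n$, I would apply Lemma~\ref{2.11KL}(2) together with the forced inclusions $A_n \in \mathcal{N}_A$, $B_n \in \mathcal{N}_B$, $C_n \in \mathcal{N}_A \cap \mathcal{N}_B$ (the last from Lemma~\ref{2.11KL}(4)) to argue by a room-count that no slot remains for $D$ in $\mathcal{N}_A \setminus \mathcal{N}_B$ or $\mathcal{N}_B \setminus \mathcal{N}_A$, whence $D$ belongs to both.

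\emph{Part (2).} I argue the contrapositive. Assume $S_n = C_n$. Then $\underline{S_{n+1}} = C_n = S_n$, so $S_{n+1}$ is one of the two extensions of $S_n$; without loss of generality, $S_{n+1} = A_{n+1}$. For $D$ as in part (1), the extension $\overline{D}$ is an $(n+1)$-ball class distinct from $A_{n+1}, B_{n+1}$ and contains an $S_n$-neighbour of its centre. Under $S_{n+1} = A_{n+1}$, I will show that any $(n+1)$-ball class with an $S_n$-neighbour of its centre and with restriction distinct from $S_n$ must be $S_{n+1}$ itself; thus $\overline{D} = S_{n+1}$, and $D = \underline{S_{n+1}} = C_n$. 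The contrapositive follows.

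\emph{Part (3).} By Lemma~\ref{2.11KL}(3), $A_n$ and $B_n$ (when distinct from $S_n$) are always adjacent to $S_n$; by Lemma~\ref{2.11KL}(4), so is $C_n$. When $C_n \neq S_n$, this places $C_n \in \mathcal{N}_A \cap \mathcal{N}_B$; together with $|\mathcal{N}_A \setminus \{S_n\}|, |\mathcal{N}_B \setminus \{S_n\}| \leq 2$ from Lemma~\ref{2.11KL}(2) and the forced inclusions $A_n \in \mathcal{N}_A$, $B_n \in \mathcal{N}_B$, inclusion--exclusion yields $|(\mathcal{N}_A \cup \mathcal{N}_B) \setminus \{S_n\}| \leq 2 + 2 - 1 = 3$. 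When $C_n = S_n$, part (2) excludes any class $D \neq A_n, B_n, S_n$ adjacent to $S_n$, bounding the adjacent classes (apart from $S_n$) by $|\{A_n, B_n\}| \leq 2 \leq 3$. Either way, we obtain the desired bound.

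The main obstacle will be the reverse adjacency in part~(1), specifically the subcase $D \neq C_n$: one must rule out the asymmetric scenario $D \in \mathcal{N}_A \setminus \mathcal{N}_B$ via a tight room-count from Lemma~\ref{2.11KL}(2), with additional care in the degenerate cases where $C_n$ coincides with $A_n$ or $B_n$.
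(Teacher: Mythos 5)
There are genuine gaps in parts (1) and (2). In part (1) the forward direction (every $D$-ball has an $S_n$-neighbour, via the unique extension $\overline{D}$) is fine, but the reverse direction is exactly the content of the lemma and your ``room-count'' is neither carried out nor capable of closing the degenerate cases. Concretely, classes $D\neq A_n,B_n,C_n,S_n$ adjacent to $S_n$ do occur (see the last example of Section 5 and Lemma~\ref{lem:merged}(1)), and they can only occur when $A_n$, $B_n$, $C_n$, $S_n$ collide; in a configuration such as $A_n=C_n\neq S_n$ with $B_n\neq S_n,C_n$, one has $\mathcal{N}_A\setminus\{S_n\}=\{C_n,D\}$ while $\mathcal{N}_B\setminus\{S_n\}$ already contains the two classes $B_n$ and $C_n$, so Lemma~\ref{2.11KL}(2) leaves \emph{no} room for $D$ in $\mathcal{N}_B$ -- the count points in the wrong direction and yields no contradiction. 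The paper avoids all of this by going \emph{down} a level rather than up: a single adjacent pair shows that the colored ball $S_n$ itself contains an $(n-1)$-ball of class $\underline{D}$ next to its centre, so every $S_n$-ball has a $\underline{D}$-neighbour; since $D\neq A_n,B_n$ forces $\underline{D}\neq S_{n-1}$, the class $\underline{D}$ extends uniquely to $D$, and always-adjacency follows in two lines. You never use the restriction $\underline{D}$, which is the key idea.

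In part (2) your intermediate claim is false as stated. When $S_n=C_n$ and, say, $S_{n+1}=A_{n+1}$, one has $\underline{S_{n+1}}=C_n=S_n$, so your claim amounts to asserting that no $(n+1)$-class with restriction different from $S_n$ has an $S_n$-neighbour of its centre, i.e.\ that nothing outside $\{A_{n+1},B_{n+1}\}$ is adjacent to $A_{n+1}$ or $B_{n+1}$. But $\overline{A_n}$ (or $\overline{B_n}$) is such a class by Lemma~\ref{2.11KL}(1) whenever $A_n\neq S_n$ (resp.\ $B_n\neq S_n$), and connectivity of $\G_{n+1}$ with $b_{n+1}(\phi)\geq 3$ already rules out $\{A_{n+1},B_{n+1}\}$ being isolated. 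The paper's actual mechanism is different: since one of $A_{n+1},B_{n+1}$ equals $S_{n+1}$, Lemma~\ref{2.11KL}(3) shows $A_{n+1}$ and $B_{n+1}$ can be adjacent \emph{to each other}; together with always-adjacency to $\overline{D}$ (from part (1)) and to $\overline{A_n}$ or $\overline{B_n}$, this produces three distinct classes adjacent to a single $(n+1)$-ball, contradicting Lemma~\ref{2.11KL}(2). Your part (3) is essentially the paper's count and is fine once (1) and (2) are established.
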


\begin{proof}
(1) We only need to show that $S_n$ is always adjacent to $D$. 
Since $S_n$ can be adjacent to $D$, it follows that $S_n$ contains $\underline{D}$, 
i.e. $S_n$ is always adjacent to $\underline{D}$. Note that $\underline{D} \neq S_{n-1}$ since $D \neq A_n, B_n$.
Therefore $\underline{D}$ is uniquely extended to $D$ and $S_n$ is always adjacent to $D$.

(2) Suppose $D \neq C_n$. If $S_n = C_n$, then either $A_{n+1} = S_{n+1}$ or $B_{n+1} = S_{n+1}$.
By Lemma~\ref{2.11KL} (3), it follows that $B_{n+1} (\ne S_{n+1})$ is adjacent to $S_{n+1} = A_{n+1}$ or
$A_{n+1} (\ne S_{n+1})$ is adjacent to $S_{n+1} = B_{n+1}$.
Therefore, $A_{n+1}$, $B_{n+1}$ can be adjacent.
Since $A_{n+1}$, $B_{n+1}$ are always adjacent to $D$ by part (1) and to $A_n$, $B_n$, respectively, by Lemma~\ref{2.11KL} (1),
it follows that $A_{n+1}$ and $B_{n+1}$ can be adjacent to $B_{n+1}$, $\overline{A_n}$, $\overline{D}$
and to $A_{n+1}$, $\overline{B_n}$, $\overline{D}$ respectively.
Since either $A_n \ne S_n$ or $B_n \ne S_n$, either $B_{n+1}$, $\overline{A_n}$, $\overline{D}$ or $A_{n+1}$, $\overline{B_n}$, $\overline{D}$ are distinct, which contradicts Lemma~\ref{2.11KL} (2).

(3) By Lemma~\ref{2.11KL} (2), apart from $S_n$, there are at most four classes of $n$-balls adjacent to $S_n$. 
If there exists $D$ distinct from $A_n, B_n, S_n$ and adjacent to $S_n$, by part (1), $D, S_n$ are always adjacent, thus two of the four classes are $D$.  
\end{proof}

\subsection{Graph $\G_n$ of colored balls} In this subsection, we establish basic properties of the graph $\G_n$.
Recall that $V\G_n = \B_\phi(n)$ and two classes of $n$-balls are adjacent in $\G_n$, i.e. they are connected by an edge in $E\G_n$ if they can be adjacent. Recall also that a loop is an edge whose initial and terminal vertices are the same, and a cycle is of length larger than 1.


\begin{lemma}\label{lem:merged}
\begin{enumerate}
\item Suppose that $D$ is distinct from $A_n, B_n, C_n, S_n$. If $S_n,D$ are adjacent in $\G_n$, then $D$ is a vertex of degree 1 in $\G_n$ and there is a cycle in $\G_{n+1}$.

\item If $B_{n+1}$ (resp. $A_{n+1}$) is adjacent to $A_n$ (resp. $B_n$) in $\G_n$, and $A_n \neq S_n$, $C_n$, then there is a cycle in $\G_{n+1}$.
\end{enumerate}
\end{lemma}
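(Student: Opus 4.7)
The plan is to handle parts (1) and (2) in parallel, since both produce a cycle in $\G_{n+1}$ by lifting an adjacency from $\G_n$ to the $(n+1)$-ball level.

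For part (1), suppose $D \neq A_n, B_n, C_n, S_n$ is adjacent to $S_n$ in $\G_n$. First, Lemma~\ref{newlem}(2) gives $S_n \neq C_n$. Since $D$ is non-special, its unique $(n+1)$-extension $\overline{D}$ satisfies $\overline{D} \notin \{A_{n+1}, B_{n+1}, S_{n+1}\}$: the first two have center-class $S_n \neq D$, and $S_{n+1}$ has center-class $C_n \neq D$. To prove $D$ has degree $1$ in $\G_n$, examine the $d$ neighbors $y_1, \dots, y_d$ of the center $x$ of a $\overline{D}$-ball. Lemma~\ref{2.11KL}(2) bounds the classes of $\N_n(y_i)$ to at most two distinct classes apart from $D$, of which $S_n$ is one by hypothesis. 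The task is to rule out both a second class and any self-loop: Lemma~\ref{newlem}(3), together with $D \neq A_n, B_n, C_n$ and $C_n \neq S_n$, forces one of $A_n, B_n$ to coincide with $S_n$ or $C_n$; and since $\underline{D} \neq S_{n-1}$ (as $D \neq A_n, B_n$), the $(n-1)$-sub-balls $\N_{n-1}(y_i)$ are uniquely determined by $D$, from which a case-by-case analysis of how each $\N_{n-1}(y_i)$ extends to an $n$-ball inside $\overline{D}$ forces every $\N_n(y_i)$ to lie in $S_n$. This structural step is the technically heaviest part of the proof.

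Once degree $1$ is established, the $d$ neighbors of the center of $\overline{D}$ all have $n$-ball class $S_n$, so their $(n+1)$-ball classes lie in $\{A_{n+1}, B_{n+1}\}$; within the unique $(n+2)$-extension $\overline{\overline{D}}$ I would argue that both $A_{n+1}$ and $B_{n+1}$ must occur (otherwise the two distinct extensions of $S_n$ would not actually be distinguished in $T$ near $\overline{D}$-balls, contradicting their definition). Hence $\overline{D}$ is adjacent to both $A_{n+1}$ and $B_{n+1}$ in $\G_{n+1}$. Since $S_n \neq C_n$ gives $A_{n+1}, B_{n+1} \neq S_{n+1}$, Lemma~\ref{2.11KL}(3) at level $n+1$ supplies the edges $A_{n+1}-S_{n+1}$ and $B_{n+1}-S_{n+1}$, closing the $4$-cycle $\overline{D}-A_{n+1}-S_{n+1}-B_{n+1}-\overline{D}$ in $\G_{n+1}$.

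For part (2), by symmetry it suffices to treat the case that $B_{n+1}$ contains an $A_n$-ball at distance $1$ from center, with $A_n \neq S_n, C_n$. The unique $(n+1)$-extension $\overline{A_n}$ satisfies $\overline{A_n} \notin \{A_{n+1}, B_{n+1}, S_{n+1}\}$ by the same center-class argument. Lemma~\ref{2.11KL}(1) gives the edge $\overline{A_n}-A_{n+1}$ in $\G_{n+1}$, and the hypothesis gives $\overline{A_n}-B_{n+1}$. If $S_n \neq C_n$, then $A_{n+1}, B_{n+1} \neq S_{n+1}$ and both are adjacent to $S_{n+1}$, yielding the $4$-cycle $\overline{A_n}-A_{n+1}-S_{n+1}-B_{n+1}-\overline{A_n}$. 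If $S_n = C_n$, one of $A_{n+1}, B_{n+1}$ equals $S_{n+1}$, say $A_{n+1}=S_{n+1}$, and then $B_{n+1}$ (being $\neq S_{n+1}$) is adjacent to $A_{n+1}=S_{n+1}$ by Lemma~\ref{2.11KL}(3), producing the triangle $\overline{A_n}-A_{n+1}-B_{n+1}-\overline{A_n}$. The main obstacle is the degree-$1$ claim in part (1); once the structural analysis of $\overline{D}$ rules out any extra adjacency of $D$, the cycle constructions in both parts follow routinely.
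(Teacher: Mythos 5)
Part (2) of your proposal is correct and is essentially the paper's argument. Part (1), however, has a genuine gap, and it sits exactly where you locate the ``technically heaviest part.'' You defer the degree-one claim to an unexecuted ``case-by-case analysis'' of how the $(n-1)$-sub-balls of the neighbors of a $\overline{D}$-center extend inside $\overline{D}$; that analysis is never carried out, and it is not clear it can be made to work from the ingredients you list (your intermediate claim that one of $A_n,B_n$ must equal $S_n$ or $C_n$ already overlooks the case $A_n=B_n$). Worse, your justification that \emph{both} $A_{n+1}$ and $B_{n+1}$ are adjacent to $\overline{D}$ --- ``otherwise the two distinct extensions of $S_n$ would not be distinguished near $\overline{D}$-balls'' --- is not an argument: nothing in the definition of $A_{n+1},B_{n+1}$ as the two extensions of $S_n$ prevents, a priori, all neighbors of every $\overline{D}$-center from extending to $A_{n+1}$ alone, with $B_{n+1}$ simply not adjacent to $\overline{D}$.

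The missing ingredient is Lemma~\ref{newlem}~(1). Since $D\neq A_n,B_n,S_n$ and $S_n,D$ can be adjacent, that lemma upgrades the adjacency to \emph{always} adjacent: every ball of class $S_n$ has a neighbor of class $D$. As the centers of $A_{n+1}$- and $B_{n+1}$-balls carry $S_n$, and $D$ extends uniquely to $\overline{D}$, both $A_{n+1}$ and $B_{n+1}$ are adjacent to $\overline{D}$ in $\G_{n+1}$; combined with Lemma~\ref{2.11KL}~(3) (using $S_n\neq C_n$, hence $A_{n+1},B_{n+1}\neq S_{n+1}$) this immediately yields the cycle $[S_{n+1}A_{n+1}\overline{D}B_{n+1}S_{n+1}]$. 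The paper then reverses your order of deduction: the degree-one statement for $D$ falls out by restricting this cycle to $n$-balls, since by Lemma~\ref{2.11KL}~(2) the only classes adjacent to $\overline{D}$ are $A_{n+1}$ and $B_{n+1}$, both of which restrict to $S_n$. Proving the cycle first makes the hard half of the statement a one-line corollary and avoids the structural analysis you were unable to complete.
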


\begin{proof}
(1) By Lemma~\ref{newlem} (2), we have $S_n \neq C_n$. Since $S_n \ne D$ as well, 
both $A_{n+1}$ and $B_{n+1}$ are distinct from $S_{n+1}$ and $\overline D$. 
Lemma~\ref{2.11KL} (3) implies that both $A_{n+1}$ and $B_{n+1}$ are adjacent to $S_{n+1}$.
By Lemma~\ref{newlem} (1), $S_n$ is always adjacent to $D$,
thus both $A_{n+1}$ and $B_{n+1}$ can be adjacent to $\overline D$. 
Therefore, the path with vertices $[S_{n+1}A_{n+1}\overline D B_{n+1} S_{n+1}]$ is a cycle in $\G_{n+1}$. The restriction of the path to $n$-balls is a line segment $[C_n S_n D]$ with $D$ a vertex of degree 1. 

(2) By assumption, $\overline{A_n}$ is adjacent to $B_{n+1}$ in $\G_n$. By Lemma~\ref{2.11KL} (1), $\overline{A_n}$ is adjacent to $A_{n+1}$ in $\G_n$.
Since $A_n \neq S_n$, $\overline{A_n}$ differs from both $A_{n+1}$ and $B_{n+1}$.
If $S_n \neq C_n$, then 
$S_{n+1}$ differs from both $A_{n+1}$ and $B_{n+1}$.
Thus, by Lemma~\ref{2.11KL} (3), the path with vertices $[S_{n+1} A_{n+1} \overline{A_n}B_{n+1}S_{n+1}]$ is a cycle in $\G_{n+1}$.
If $S_n = C_n$, then $S_{n+1}$ is one of $A_{n+1}, B_{n+1}$, 
By Lemma~\ref{2.11KL} (3), the path $[S_{n+1}\overline{A_n} B_{n+1} S_{n+1}]$ or $[S_{n+1}\overline{A_n}A_{n+1}S_{n+1}]$ is a cycle in $\G_{n+1}$.
\end{proof}

Recall that $S_{-1}=C_{-1}=A_{-1}= \emptyset$ and $S_0=A_0$.
\begin{lemma}\label{lem:n0} 
There exists $0 \leq \NN \leq \infty$ such that $S_n = A_n = C_n$ if and only if $-1 \le n < \NN$. Therefore, either $A_K =S_K$ or $B_K=S_K.$ 

\end{lemma}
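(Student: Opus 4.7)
The plan is to show the set $\mathcal{N} := \{n \geq -1 : S_n = A_n = C_n\}$ is an initial segment $\{-1, 0, \ldots, K-1\}$ for some $K \in \{0, 1, \ldots, \infty\}$; the ``therefore'' clause then follows immediately. By the convention $S_{-1} = A_{-1} = C_{-1} = \emptyset$, the base index $n = -1$ lies in $\mathcal{N}$, and $S_0 = A_0$ by the labeling choice in Lemma~\ref{2.11KL}. Set $K := \inf\{n \geq 0 : S_n \neq A_n \text{ or } S_n \neq C_n\} \in [0,\infty]$. The forward inclusion $\{-1,0,\ldots,K-1\}\subseteq \mathcal{N}$ is built into this definition, and the real content of the lemma is the downward-hereditary property
\[
S_{n+1} = A_{n+1} = C_{n+1} \ \Longrightarrow\ S_n = A_n = C_n,
\]
from which a downward induction yields $\mathcal{N} \subseteq \{-1,0,\ldots,K-1\}$, and hence the claimed equality.

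I will prove the two conjuncts of the hereditary implication separately. The first conjunct $C_n = S_n$ is a one-line chain: since $A_{n+1}$ is by definition an extension of $S_n$, $C_n = \underline{S_{n+1}} = \underline{A_{n+1}} = S_n$. The second conjunct $S_n = A_n$ is the main obstacle. Here I plan to invoke the uniqueness clause of Lemma~\ref{2.11KL}(1), which characterizes $A_{n+1}$ as the extension of $S_n$ with $i(A_{n+1},A_n) > i(B_{n+1},A_n)$, together with the extra input $C_{n+1} = A_{n+1}$, which reads $\underline{S_{n+2}} = A_{n+1} = S_{n+1}$. Splitting cases according to whether $S_n$ equals $A_n$, equals $B_n$, or belongs to neither class, I expect Lemma~\ref{newlem} (notably the constraints on which classes can be adjacent to $S_n$ and on when $S_n = C_n$) to rule out the latter two possibilities: the very fact that the $A$-labeled extension coincides with the special ball at level $n+1$ is what should propagate the agreement between the $A$-labeling and specialty one level down.

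Finally, the ``therefore'' clause: if $K = 0$, then $S_0 = A_0$ is the initial labeling convention, so $S_K = A_K$. If $K \geq 1$, the equality at $n = K-1$ in particular gives $C_{K-1} = S_{K-1}$, hence $\underline{S_K} = S_{K-1}$, i.e.\ $S_K$ is an extension of $S_{K-1}$; since $S_{K-1}$ has exactly the two extensions $A_K$ and $B_K$, this forces $S_K \in \{A_K, B_K\}$.
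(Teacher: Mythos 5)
Your overall architecture coincides with the paper's: everything reduces to the downward-hereditary implication $S_{n+1}=A_{n+1}=C_{n+1}\Rightarrow S_n=A_n=C_n$, your one-line derivation of $C_n=S_n$ is correct, and your handling of the final ``therefore'' clause matches the paper's ($S_{K-1}=C_{K-1}$ forces $S_K\in\{A_K,B_K\}$). The problem is that the crux of the lemma --- the conjunct $S_n=A_n$, i.e.\ why $S_n$ cannot equal $B_n$ or a third class --- is only announced, not proved: ``I expect Lemma~\ref{newlem}\dots to rule out the latter two possibilities'' is a plan, and the ingredients you name do not assemble themselves. What is actually needed (and what the paper does) is the following chain: apply Lemma~\ref{newlem}~(2) at level $n+1$, where $S_{n+1}=A_{n+1}=C_{n+1}$, to conclude that $S_{n+1}$ is adjacent in $\G_{n+1}$ only to $B_{n+1}$ and itself; use connectedness of $\G_{n+1}$ and $|V\G_{n+1}|\ge 3$ together with Lemma~\ref{2.11KL}~(2) to produce exactly one further class $D$ adjacent to $B_{n+1}$; then restrict to level $n$ to see that $S_n=\underline{A_{n+1}}=\underline{B_{n+1}}$ is adjacent only to $\underline{D}$ and itself. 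Since by Lemma~\ref{2.11KL}~(1) $S_n$ can be adjacent to both $A_n$ and $B_n$, this forces $\{A_n,B_n\}\subseteq\{S_n,\underline{D}\}$, which kills the ``neither'' case; and the configuration $B_n=S_n$, $A_n=\underline{D}$ is excluded because $A_{n+1}$ would then have to be adjacent to the unique extension $D$ of $\underline{D}$, contradicting that $S_{n+1}=A_{n+1}$ neighbors only $B_{n+1}$ and itself.

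Without this restriction-of-neighborhoods argument your case split has no engine. The uniqueness clause of Lemma~\ref{2.11KL}~(1) that you propose to invoke could indeed be made to exclude $S_n=B_n$ (in that configuration every $n$-ball adjacent to the center of an $A_{n+1}$-ball would have class $S_n=B_n$, so $A_{n+1}$ would not be adjacent to $A_n$ at all), but you would still owe the preliminary step that pins down the neighbors of $S_{n+1}$, which is exactly where Lemma~\ref{newlem}~(2) and the connectivity of $\G_{n+1}$ enter. As written, the proof is incomplete at its central step.
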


%
%
\begin{proof}     
Suppose that $S_{n+1} = A_{n+1} = C_{n+1}$ and $n \ge 0$. 
Then by Lemma~\ref{newlem} (2), $S_{n+1}$ is adjacent to only $B_{n+1}$ and itself.
By Lemma~\ref{2.11KL} (2), $B_{n+1}$ has exactly one more adjacent vertex $D$ in $\G_{n+1}$ since $\G_{n+1}$ is connected and $|V\G_{n+1}| \ge 3$. 
Then $S_n = \underline{B_{n+1}}=\underline{A_{n+1}} = \underline{S_{n+1}} = C_n$ and $S_n$ is adjacent to only $\underline{D}$ and itself.
Since $S_n$ can be adjacent to $A_n$ and $B_n$ by Lemma~\ref{2.11KL} (1), 
we have $A_n = S_n=C_n$ and $B_n = \underline{D}$.
The case $B_n=S_n=C_n$ and $A_n = \underline{D}$ does not occur since otherwise $A_{n+1}$ would be adjacent to $\underline{D}$ which is a contradiction to the fact that $S_{n+1}$ is adjacent to only $B_{n+1}$ and itself.
If  $S_0 = A_0 \ne C_0$, then put $\NN =0$. 

Since $S_{K-1}=C_{K-1}$, either $A_K =S_K$ or $B_K=S_K.$ 
\end{proof}
\subsection{Edge-indexed Graphs $\G_n^A, \G_n^B$ of colored balls}\label{subsec:GAB} Although the vertices of $\G_n$ are equivalence classes of vertices of $X$, $\G_n$ do not resemble the graph $X$ even locally, because of the special ball $S_n$, which can be extended to two ways in $T_\phi$. In this section, we define two edge-indexed graphs 
$\G^A_n$, $\G^B_n$ so that $X$ locally looks like either $\G^A_n$ or $\G^B_n$.   

\begin{definition} Define the indices $i, i_A, i_B : \bigcup_n (V \G_n \times V \G_n) \to \mathbb{N}$ as follows.
\begin{enumerate}
\item[(1)] If an $n$-ball $X_n$ is not special, let 
$ i(X_n, Y_n) $ be the number of $n$-balls colored by $Y_n$ adjacent to $X_n$. It is independent of the position of $X_n$ in the tree.
\item[(2)] Define $i_A(S_n, Y_n)$, $i_B(S_n, Y_n)$ to be the number of $Y_n$ adjacent to $S_n$ in $A_{n+1}, B_{n+1}$, respectively. For simplicity, for $X_n \ne S_{n}$ denote $i_A (X_n,Y_n) = i_B (X_n,Y_n) = i(X_n,Y_n)$.  

\end{enumerate}
\end{definition}

In particular, $ i(X_n, Y_n)=0 $ if there is no edge between $X_n$ and $Y_n$. 
\begin{remark}
Remark that $i, i_A, i_B$ are not reflexive in general, however the positivity of $i$ is reflexive in the following sense: 
\begin{enumerate}
\item If $X,Y$ are not special, then  $i(X, Y)>0$ if and only if $i(Y, X)>0$. 
\item If $X \neq S_n$, then $i(X, S_n)>0$ if and only if $i_A(S_n, X)+i_B(S_n, X)>0$.
\end{enumerate}
\end{remark}
\begin{definition}\label{def:gnab}
Define $\G^A_n$ ($\G^B_n$) to be the edge-indexed oriented graph whose vertices are those which are connected by a path from $S_n$ with edges of positive index for $i_A$ ($i_B$, respectively). 
Their oriented edge set is the set of oriented edges between vertices in $\G^A_n$ ($\G^B_n$, respectively) with positive index for $i_A$ ($i_B$, respectively), endowed with the index $i_A$ ($i_B$, respectively). 
\end{definition}
For $n = -1$,  define $\G^A_{-1} = \G^B_{-1}$ to be the edge-indexed graph consisting of the vertex of empty ball $S_{-1} = C_{-1}$
 and a loop on it indexed by $d$, the degree of $\T$.

 It is clear that the set of vertices $V\G^A_n \cup V\G^B_n$ is the set of classes of $n$-balls $\mathbf B_\phi (n)$.

Note that the 1-neighborhood of a given vertex in $\G^A_n$ and $\G^B_n$ are identical except at $S_n$, since any non-special $n$-ball has a unique extension to $(n+1)$-ball. 

%

The following lemma is immediate by definition. The index $i_B$ enjoys properties similar to those of $i_A$.
\begin{lemma}\label{lem:index}
Let $V \neq S_n$. Then for each $n \ge -1$
\begin{enumerate}
\item For an $n$-ball $U \neq S_n, C_n$ we have
$$ i(U,V) = i(\overline{U}, \overline{V}), \qquad i(U, S_n) = i(\overline{U}, A_{n+1})+i(\overline{U}, B_{n+1}).$$
\item If $C_n \ne S_n$, then 
\begin{align*} i(C_n, V) &= i_A (S_{n+1}, \overline{V}),  &i(C_n, S_n) &= i_A (S_{n+1}, A_{n+1}) + i_A (S_{n+1}, B_{n+1}), \\
i_A(S_n, V)&= i(A_{n+1}, \overline{V}), & i_A(S_n, S_n) &= i(A_{n+1}, A_{n+1}) +i(A_{n+1},B_{n+1}).\end{align*}

\item If $C_n = S_n$, say $S_{n+1} = A_{n+1}$, then 
\begin{align*}
i_A(S_n, V) &= i_A(A_{n+1}, \overline{V}) &
i_A(S_n, S_n) &= i_A(A_{n+1}, A_{n+1}) +i_A(A_{n+1},B_{n+1}) \\
&= i_B(A_{n+1}, \overline{V}) &
&= i_B(A_{n+1}, A_{n+1}) +i_B(A_{n+1},B_{n+1}) \\
i_B(S_n, V) &= i(B_{n+1}, \overline{V}) &  
i_B(S_n, S_n) &= i(B_{n+1}, A_{n+1}) +i(B_{n+1},B_{n+1}).
\end{align*}
\end{enumerate}
\end{lemma}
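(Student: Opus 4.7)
The proof of this lemma is a systematic bookkeeping exercise: each identity asserts that a count of adjacencies at level $n$ equals the same count carried out at level $n+1$, obtained by transporting $\N_n(y)$ to its canonical extension $\N_{n+1}(y)$ whenever the former class is non-special. My plan is to treat the three parts in turn, keeping the following elementary observation at the center. If $x, y$ are adjacent vertices of $\T$, then $\N_n(y) \subset \N_{n+1}(x)$, so the class $[\N_n(y)]$ is determined by $[\N_{n+1}(x)]$; moreover, whenever $V \neq S_n$ the restriction map identifies $V$ with its unique extension $\overline{V}$, so $[\N_n(y)] = V$ if and only if $[\N_{n+1}(y)] = \overline{V}$.

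For part (1), I fix a representative $x$ of the class $U$. Since $U \neq S_n$, $\N_{n+1}(x)$ has the unique class $\overline{U}$, and since $U \neq C_n$ we have $\overline{U} \neq S_{n+1}$, so $i(\overline{U}, \cdot)$ is a plain, unambiguous count. For $V \neq S_n$, the observation puts neighbors $y$ of $x$ with $[\N_n(y)] = V$ in bijection with those with $[\N_{n+1}(y)] = \overline{V}$, giving the first formula. For $i(U, S_n)$, a neighbor $y$ with $[\N_n(y)] = S_n$ has $[\N_{n+1}(y)]$ equal to exactly one of $A_{n+1}$ or $B_{n+1}$, so the count splits disjointly.

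For part (2), the preliminary step is to show that $C_n \neq S_n$ forces $A_{n+1}, B_{n+1} \neq S_{n+1}$. This follows from $\underline{A_{n+1}} = \underline{B_{n+1}} = S_n$ versus $\underline{S_{n+1}} = C_n$. Once this is noted, every symbol $i(A_{n+1}, \cdot)$, $i(B_{n+1}, \cdot)$, $i(\overline{V}, \cdot)$ on the right-hand sides is a plain index. For $i(C_n, V) = i_A(S_{n+1}, \overline{V})$, I fix $x$ with $[\N_n(x)] = C_n$, so $[\N_{n+1}(x)] = S_{n+1}$, and choose the $(n+2)$-extension $A_{n+2}$. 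Since $V \neq S_n$ forces $\overline{V} \notin \{A_{n+1}, B_{n+1}\}$, the neighbors $y$ with $[\N_{n+1}(y)] = \overline{V}$ are exactly those with $[\N_n(y)] = V$, and in particular the count is independent of the $A$-vs-$B$ choice of $(n+2)$-extension. The remaining identities of part (2) then drop out by the same scheme: splitting the count of $S_n$-neighbors of $C_n$ by their extension type yields the second identity; translating $i_A(S_n, \cdot)$ to $i(A_{n+1}, \cdot)$ via the correspondence $y \mapsto \N_{n+1}(y)$ (now with $A_{n+1}$ non-special) yields the third; and a further $A_{n+1}/B_{n+1}$-split yields the fourth.

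Part (3) is the delicate case, since here $A_{n+1} = S_{n+1}$ is itself special and the symbol $i(A_{n+1}, \cdot)$ is no longer defined: it must be replaced throughout by the decorated versions $i_A, i_B$. The substantive point to verify is the equality $i_A(A_{n+1}, \overline{V}) = i_B(A_{n+1}, \overline{V})$ for $V \neq S_n$; since $V \neq S_n = C_n$, we have $\overline{V} \neq S_{n+1} = A_{n+1}$, so the adjacent $(n+1)$-balls of class $\overline{V}$ come from non-special $n$-balls $\N_n(y)$ whose extensions are unique, and the count therefore depends only on $[\N_{n+1}(x)] = A_{n+1}$ and not on the further $(n+2)$-extension. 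After this, the remaining identities of part (3) follow by the same extension/restriction correspondence combined with the $A_{n+1}/B_{n+1}$-splitting of $S_n$-neighbors. The main obstacle is purely notational: keeping straight which classes are special at which level, and consequently which of $i, i_A, i_B$ carries unambiguous meaning; once this bookkeeping is in place each identity becomes a one-line application of the basic correspondence.
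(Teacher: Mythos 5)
Your proof is correct and takes the only natural route; the paper itself offers no proof at all, stating only that ``the following lemma is immediate by definition,'' and your writeup is precisely the extension/restriction bookkeeping that claim is shorthand for. In particular you correctly isolate the two points that make it non-trivial to state (that $i_A(S_{n+1},\overline{V})=i_B(S_{n+1},\overline{V})$ when $\overline{V}\notin\{A_{n+1},B_{n+1}\}$, since the count is already determined at level $n+1$, and that only the \emph{sum} over $A_{n+1},B_{n+1}$ is forced when splitting the $S_n$-neighbors), so nothing is missing.
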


\begin{definition}
We say that a Sturmian coloring is cyclic if there is a cycle in $\G_n$ for some $n$.
We say that a Sturmian coloring is acyclic if it is not cyclic.
\end{definition}

\begin{example}
For the coloring $\phi$ in Figure~\ref{sturmfig}, we have the sequence of $\G_n$ as follows.
$$
A_0=S_0 =\circ,  \quad B_0=C_0= \bullet; $$
$$
\begin{tikzpicture}[every loop/.style={}]
  \tikzstyle{every node}=[inner sep=-1pt]
  
  \node (0) at (0,0) {$\circ$} node [left=30pt] at (0,0) {$\G_0$ :} node [below=4pt] at (0,0) {$S_0$};  
  \node (1) at (1.5,0) {$\bullet$} node [below=4pt] at (1.5,0) {$B_0 $};

  \path[-] 
	(0)  edge (1);
  
  \node (10) at (0,1) {$\circ$} node [left=30pt] at (0,1) {$\G^A_0$ :} node [below=4pt] at (0,1) {$S_0$};    
  \node (11) at (1.5,1) {$\bullet$} node [below=4pt] at (1.5,1) {$B_0$};
 
  \node (16) at (0,-1) {$\circ$} node [left=30pt] at (0,-1) {$\G^B_0$ :} node [below=4pt] at (0,-1) {$S_0$};  
  \node (17) at (1.5,-1) {$\bullet$} node [below=4pt] at (1.5,-1) {$B_0$};
 
  \path[-] 
	(10)  edge node [above=4pt] {1 \qquad 2} (11)
	(16)  edge node [above=4pt] {2 \qquad 2} (17);
  \path[-]
	  	(10) edge [loop left] node [above=8pt,right=4pt] {2} (10)
	  	(11) edge [loop right] node [above=8pt, left=4pt] {1} (11)
	  	(16) edge [loop left] node [above=8pt,right=4pt] {1} (16)
	  	(17) edge [loop right] node [above=8pt, left=4pt] {1} (17);
\end{tikzpicture}  $$
$$
A_1 = 
\raisebox{-.2\height}{\begin{tikzpicture}[scale=.6]
  \tikzstyle{every node}=[inner sep=-1pt]
  \node (0) at (0,0) {$\circ$};
  \node (1) at (.87,.5) {$\bullet$};
  \node (2) at (-.87,.5) {$\circ$};
  \node (3) at (0,-1) {$\circ$};
  \path[-] (0) edge (1) (0) edge (2) (0) edge (3);
\end{tikzpicture},} \quad
B_1 = C_1 =  \raisebox{-.2\height}{\begin{tikzpicture}[scale=.6]
  \tikzstyle{every node}=[inner sep=-1pt]
  \node (0) at (0,0) {$\circ$};
  \node (1) at (.87,.5) {$\bullet$};
  \node (2) at (-.87,.5) {$\circ$};
  \node (3) at (0,-1) {$\bullet$};
  \path[-] (0) edge (1) (0) edge (2) (0) edge (3);
\end{tikzpicture},} 
\quad S_1 = \overline{B_0}=
\raisebox{-.2\height}{\begin{tikzpicture}[scale=.6]
  \tikzstyle{every node}=[inner sep=-1pt]
  \node (0) at (0,0) {$\bullet$};
  \node (1) at (.87,.5) {$\bullet$};
  \node (2) at (-.87,.5) {$\circ$};
  \node (3) at (0,-1) {$\circ$};
  \path[-] (0) edge (1) (0) edge (2) (0) edge (3);
\end{tikzpicture};}$$
$$
\begin{tikzpicture}[every loop/.style={}]
  \tikzstyle{every node}=[inner sep=-1pt]
  
  \node (0) at (0,0) {$\circ$} node [left=20pt] at (0,0) {$\G_1$ :} node [below=4pt] at (0,0) {$A_1$};  
  \node (1) at (1.5,0) {$\bullet$} node [below=4pt] at (1.5,0) {$S_1$};
  \node (2) at (3,0) {$\circ$} node [below=4pt] at (3,0) {$B_1$};

  \path[-] 
	(0)  edge (1)
	(1)  edge (2);
  
  \node (10) at (0,1.3) {$\circ$} node [left=20pt] at (0,1.3) {$\G^A_1$ :} node [below=4pt] at (0,1.3) {$A_1$};    
  \node (11) at (1.5,1.3) {$\bullet$} node [below=4pt] at (1.5,1.3) {$S_1$};
  \node (12) at (3,1.3) {$\circ$} node [below=4pt] at (3,1.3) {$B_1$};

  \node (16) at (1.5,-1.3) {$\bullet$} node [left=20pt] at (0,-1.3) {$\G^B_1$ :} node [below=4pt] at (1.5,-1.3) {$S_1$};  
  \node (17) at (3,-1.3) {$\circ$} node [below=4pt] at (3,-1.3) {$B_1$};

  \path[-] 
	(10)  edge node [above=4pt] {1 \qquad 1} (11)
	(11)  edge node [above=4pt] {1 \qquad 2} (12)
	(16)  edge node [above=4pt] {2 \qquad 2} (17);
  \path[-]
	  	(10) edge [loop left] node [above=8pt,right=4pt] {2} (10)
	  	(11) edge [loop above] (11)
	  	(12) edge [loop right] node [above=8pt, left=4pt] {1} (12)
	  	(16) edge [loop left] node [above=8pt,right=4pt] {1} (16)
	  	(17) edge [loop right] node [above=8pt, left=4pt] {1} (17);
\end{tikzpicture} 
$$

$$
\overline{A_1} = \raisebox{-.3\height}{\begin{tikzpicture}[scale=.4]
  \tikzstyle{every node}=[inner sep=-1pt]
  \node (0) at (0,0) {$\circ$};
  \node (1) at (.87,.5) {$\bullet$};
  \node (2) at (-.87,.5) {$\circ$};
  \node (3) at (0,-1) {$\circ$};
  \node (4) at (1.73,0) {$\bullet$};
  \node (5) at (.87,1.5) {$\circ$};
  \node (6) at (-.87,1.5) {$\bullet$};
  \node (7) at (-1.73,0) {$\circ$};
  \node (8) at (-.87,-1.5) {$\bullet$};
  \node (9) at (.87,-1.5) {$\circ$};
  \path[-] (0) edge (1) (0) edge (2) (0) edge (3) (1) edge (4) (1) edge (5) (2) edge (6) (2) edge (7) (3) edge (8) (3) edge (9);
\end{tikzpicture},} \quad
\overline{B_1} = S_2 = \raisebox{-.3\height}{\begin{tikzpicture}[scale=.4]
  \tikzstyle{every node}=[inner sep=-1pt]
  \node (0) at (0,0) {$\circ$};
  \node (1) at (.87,.5) {$\bullet$};
  \node (2) at (-.87,.5) {$\circ$};
  \node (3) at (0,-1) {$\bullet$};
  \node (4) at (1.73,0) {$\bullet$};
  \node (5) at (.87,1.5) {$\circ$};
  \node (6) at (-.87,1.5) {$\bullet$};
  \node (7) at (-1.73,0) {$\bullet$};
  \node (8) at (-.87,-1.5) {$\bullet$};
  \node (9) at (.87,-1.5) {$\circ$};
  \path[-] (0) edge (1) (0) edge (2) (0) edge (3) (1) edge (4) (1) edge (5) (2) edge (6) (2) edge (7) (3) edge (8) (3) edge (9);
\end{tikzpicture},} \quad
A_2 = \raisebox{-.3\height}{\begin{tikzpicture}[scale=.4]
  \tikzstyle{every node}=[inner sep=-1pt]
  \node (0) at (0,0) {$\bullet$};
  \node (1) at (.87,.5) {$\bullet$};
  \node (2) at (-.87,.5) {$\circ$};
  \node (3) at (0,-1) {$\circ$};
  \node (4) at (1.73,0) {$\circ$};
  \node (5) at (.87,1.5) {$\circ$};
  \node (6) at (-.87,1.5) {$\circ$};
  \node (7) at (-1.73,0) {$\circ$};
  \node (8) at (-.87,-1.5) {$\bullet$};
  \node (9) at (.87,-1.5) {$\circ$};
  \path[-] (0) edge (1) (0) edge (2) (0) edge (3) (1) edge (4) (1) edge (5) (2) edge (6) (2) edge (7) (3) edge (8) (3) edge (9);
\end{tikzpicture},} \quad
B_2 = \raisebox{-.3\height}{\begin{tikzpicture}[scale=.4]
  \tikzstyle{every node}=[inner sep=-1pt]
  \node (0) at (0,0) {$\bullet$};
  \node (1) at (.87,.5) {$\bullet$};
  \node (2) at (-.87,.5) {$\circ$};
  \node (3) at (0,-1) {$\circ$};
  \node (4) at (1.73,0) {$\circ$};
  \node (5) at (.87,1.5) {$\circ$};
  \node (6) at (-.87,1.5) {$\bullet$};
  \node (7) at (-1.73,0) {$\circ$};
  \node (8) at (-.87,-1.5) {$\bullet$};
  \node (9) at (.87,-1.5) {$\circ$};
  \path[-] (0) edge (1) (0) edge (2) (0) edge (3) (1) edge (4) (1) edge (5) (2) edge (6) (2) edge (7) (3) edge (8) (3) edge (9);
\end{tikzpicture}.} $$
$$
\begin{tikzpicture}[every loop/.style={}]
  \tikzstyle{every node}=[inner sep=-1pt]
  
  \node (0) at (0,.3) {$\circ$} node [left=20pt] at (0,0) {$\G_2$ :} node [below=4pt] at (0,.3) {$\overline{A_1}$};  
  \node (1) at (1.5,.3) {$\bullet$} node [below=4pt] at (1.5,.3) {$A_2$};
  \node (2) at (1.5,-.3) {$\bullet$} node [below=4pt] at (1.5,-.3) {$B_2$};
  \node (3) at (3,0) {$\circ$} node [below=4pt] at (3,0) {$S_2$};

  \path[-] 
	(0)  edge (1)
	(1)  edge (3)
	(2)  edge (3);
  
  \node (10) at (0,1.3) {$\circ$} node [left=20pt] at (0,1.3) {$\G^A_2$ :} node [below=4pt] at (0,1.3) {$\overline{A_1}$};    
  \node (11) at (1.5,1.3) {$\bullet$} node [below=4pt] at (1.5,1.3) {$A_2$};
  \node (12) at (3,1.3) {$\circ$} node [below=4pt] at (3,1.3) {$S_2$};

  \node (16) at (1.5,-1.3) {$\bullet$} node [left=20pt] at (0,-1.3) {$\G^B_2$ :} node [below=4pt] at (1.5,-1.3) {$B_2$};  
  \node (17) at (3,-1.3) {$\circ$} node [below=4pt] at (3,-1.3) {$S_2$};

  \path[-] 
	(10)  edge node [above=4pt] {1 \qquad 1} (11)
	(11)  edge node [above=4pt] {1 \qquad 2} (12)
	(16)  edge node [above=4pt] {2 \qquad 2} (17);
  \path[-]
	  	(10) edge [loop left] node [above=8pt,right=4pt] {2} (10)
	  	(11) edge [loop above] (11)
	  	(12) edge [loop right] node [above=8pt, left=4pt] {1} (12)
	  	(16) edge [loop left] node [above=8pt,right=4pt] {1} (16)
	  	(17) edge [loop right] node [above=8pt, left=4pt] {1} (17);
\end{tikzpicture} 
$$

Note that $S_2 = C_2$ and $A_3 = S_3$. Let us omit the balls from $n=4$.

\bigskip
\begin{center}
\begin{tikzpicture}[every loop/.style={}]
  \tikzstyle{every node}=[inner sep=-1pt]
  
  \node (0) at (0,0) {$\bullet$} node [left=20pt] at (0,0) {$\G_3$ :} node [below=4pt] at (0,0) {$\overline{B_2}$};  
  \node (1) at (1.5,0) {$\circ$} node [below=4pt] at (1.5,0) {$B_3$};
  \node (2) at (3,0) {$\circ$} node [below=4pt] at (3,0) {$A_3$};
  \node (3) at (4.5,0) {$\bullet$} node [below=4pt] at (4.5,0) {$C_3$};
  \node (4) at (6,0) {$\circ$} node [below=4pt] at (6,0) {$\overline{\overline{A_1}}$};

  \path[-] 
	(0)  edge (1)
	(1)  edge (2)
	(2)  edge (3)
	(3)  edge (4);
	  
  \node (10) at (3,1.1) {$\circ$} node [left=20pt] at (0,1.3) {$\G^A_3$ :} node [below=4pt] at (3,1.1) {$A_3$};    
  \node (11) at (4.5,1.1) {$\bullet$} node [below=4pt] at (4.5,1.1) {$C_3$};
  \node (12) at (6,1.1) {$\circ$} node [below=4pt] at (6,1.1) {$\overline{\overline{A_1}}$};

  \node (16) at (0,-1.1) {$\bullet$} node [left=20pt] at (0,-1.1) {$\G^B_3$ :} node [below=4pt] at (0,-1.1) {$\overline{B_2}$};  
  \node (17) at (1.5,-1.1) {$\circ$} node [below=4pt] at (1.5,-1.1) {$B_3$};
  \node (18) at (3,-1.1) {$\circ$} node [below=4pt] at (3,-1.1) {$A_3$};
  \node (19) at (4.5,-1.1) {$\bullet$} node [below=4pt] at (4.5,-1.1) {$C_3$};
  \node (20) at (6,-1.1) {$\circ$} node [below=4pt] at (6,-1.1) {$\overline{\overline{A_1}}$};

  \path[-] 
	(10)  edge node [above=4pt] {2 \qquad 1} (11)
	(11)  edge node [above=4pt] {1 \qquad 1} (12)
	(16)  edge node [above=4pt] {2 \qquad 2} (17)
	(17)  edge node [above=4pt] {1 \qquad 1} (18)
	(18)  edge node [above=4pt] {2 \qquad 1} (19)
	(19)  edge node [above=4pt] {1 \qquad 1} (20);
  \path[-]
	  	(10) edge [loop left] node [above=8pt,right=4pt] {1} (10)
	  	(11) edge [loop above] (11)
	  	(12) edge [loop right] node [above=8pt, left=4pt] {2} (12)
	  	(16) edge [loop left] node [above=8pt,right=4pt] {1} (16)
	  	(19) edge [loop above] (19)
	  	(20) edge [loop right] node [above=8pt, left=4pt] {2} (20);
\end{tikzpicture}\\

\bigskip

\begin{tikzpicture}[every loop/.style={}]
  \tikzstyle{every node}=[inner sep=-1pt]
  
  \node (0) at (0,-.3) {$\bullet$} node [left=20pt] at (0,0) {$\G_4$ :} node [below=4pt] at (0,-.3) {$\overline{\overline{B_2}}$};  
  \node (1) at (1.5,-.3) {$\circ$} node [below=4pt] at (1.5,-.3) {$\overline{B_3}$};
  \node (2) at (3,-.3) {$\circ$} node [below=4pt] at (3,-.3) {$B_4$};
  \node (3) at (4.5,0) {$\bullet$} node [below=4pt] at (4.5,0) {$S_4$};
  \node (4) at (6,0) {$\circ$} node [below=4pt] at (6,0) {$C_4$};
  \node (5) at (3,.3) {$\circ$} node [below=4pt] at (3,.3) {$A_4$};

  \path[-] 
	(0)  edge (1)
	(1)  edge (2)
	(2)  edge (3)
	(5)  edge (3)
	(3)  edge (4);
	  
  \node (10) at (3,1.1) {$\circ$} node [left=20pt] at (0,1.1) {$\G^A_4$ :} node [below=4pt] at (3,1.1) {$A_4$};    
  \node (11) at (4.5,1.1) {$\bullet$} node [below=4pt] at (4.5,1.1) {$S_4$};
  \node (12) at (6,1.1) {$\circ$} node [below=4pt] at (6,1.1) {$C_4$};

  \node (16) at (0,-1.3) {$\bullet$} node [left=20pt] at (0,-1.1) {$\G^B_4$ :} node [below=4pt] at (0,-1.3) {$\overline{\overline{B_2}}$};  
  \node (17) at (1.5,-1.3) {$\circ$} node [below=4pt] at (1.5,-1.3) {$\overline{B_3}$};
  \node (18) at (3,-1.3) {$\circ$} node [below=4pt] at (3,-1.3) {$B_4$};
  \node (19) at (4.5,-1.3) {$\bullet$} node [below=4pt] at (4.5,-1.3) {$S_4$};
  \node (20) at (6,-1.3) {$\circ$} node [below=4pt] at (6,-1.3) {$C_4$};

  \path[-] 
	(10)  edge node [above=4pt] {2 \qquad 1} (11)
	(11)  edge node [above=4pt] {1 \qquad 1} (12)
	(16)  edge node [above=4pt] {2 \qquad 2} (17)
	(17)  edge node [above=4pt] {1 \qquad 1} (18)
	(18)  edge node [above=4pt] {2 \qquad 1} (19)
	(19)  edge node [above=4pt] {1 \qquad 1} (20);
  \path[-]
	  	(10) edge [loop left] node [above=8pt,right=4pt] {1} (10)
	  	(11) edge [loop above] (11)
	  	(12) edge [loop right] node [above=8pt, left=4pt] {2} (12)
	  	(16) edge [loop left] node [above=8pt,right=4pt] {1} (16)
	  	(19) edge [loop above] (19)
	  	(20) edge [loop right] node [above=8pt, left=4pt] {2} (20);
\end{tikzpicture} \\
\bigskip
\begin{tikzpicture}[every loop/.style={}]
  \tikzstyle{every node}=[inner sep=-1pt]
  
  \node (0) at (0,-.3) {$\bullet$} node [left=20pt] at (0,0) {$\G_5$ :} node [below=4pt] at (0,-.3) {$\overline{\overline{\overline{B_2}}}$};  
  \node (1) at (1.5,-.3) {$\circ$} node [below=4pt] at (1.5,-.3) {$\overline{\overline{B_3}}$};
  \node (2) at (3,-.3) {$\circ$} node [below=4pt] at (3,-.3) {$\overline{B_4}$};
  \node (3) at (4.5,-.3) {$\bullet$} node [below=4pt] at (4.5,-.3) {$B_5$};
  \node (4) at (6,0) {$\circ$} node [below=4pt] at (6,0) {$S_5$};
  \node (5) at (3,.3) {$\circ$} node [below=4pt] at (3,.3) {$\overline{A_4}$};
  \node (6) at (4.5,.3) {$\bullet$} node [below=4pt] at (4.5,.3) {$A_5$};

  \path[-] 
	(0)  edge (1)
	(1)  edge (2)
	(2)  edge (3)
	(5)  edge (6)
	(6)  edge (4)
	(3)  edge (4);
	  
  \node (10) at (3,1.1) {$\circ$} node [left=20pt] at (0,1.1) {$\G^A_5$ :} node [below=4pt] at (3,1.1) {$\overline{A_4}$};    
  \node (11) at (4.5,1.1) {$\bullet$} node [below=4pt] at (4.5,1.1) {$A_5$};
  \node (12) at (6,1.1) {$\circ$} node [below=4pt] at (6,1.1) {$S_5$};

  \node (16) at (0,-1.3) {$\bullet$} node [left=20pt] at (0,-1.3) {$\G^B_5$ :} node [below=4pt] at (0,-1.3) {$\overline{\overline{\overline{B_2}}}$};  
  \node (17) at (1.5,-1.3) {$\circ$} node [below=4pt] at (1.5,-1.3) {$\overline{\overline{B_3}}$};
  \node (18) at (3,-1.3) {$\circ$} node [below=4pt] at (3,-1.3) {$\overline{B_4}$};
  \node (19) at (4.5,-1.3) {$\bullet$} node [below=4pt] at (4.5,-1.3) {$B_5$};
  \node (20) at (6,-1.3) {$\circ$} node [below=4pt] at (6,-1.3) {$S_5$};

  \path[-] 
	(10)  edge node [above=4pt] {2 \qquad 1} (11)
	(11)  edge node [above=4pt] {1 \qquad 1} (12)
	(16)  edge node [above=4pt] {2 \qquad 2} (17)
	(17)  edge node [above=4pt] {1 \qquad 1} (18)
	(18)  edge node [above=4pt] {2 \qquad 1} (19)
	(19)  edge node [above=4pt] {1 \qquad 1} (20);
  \path[-]
	  	(10) edge [loop left] node [above=8pt,right=4pt] {1} (10)
	  	(11) edge [loop above] (11)
	  	(12) edge [loop right] node [above=8pt, left=4pt] {2} (12)
	  	(16) edge [loop left] node [above=8pt,right=4pt] {1} (16)
	  	(19) edge [loop above] (19)
	  	(20) edge [loop right] node [above=8pt, left=4pt] {2} (20);
\end{tikzpicture} 
\end{center}

\end{example}

\section{Induction algorithm for acyclic Sturmian colorings}\label{section:wocycle}

Throughout Section~\ref{section:wocycle} and Section~\ref{section:4}, we assume that $\phi$ is acyclic, i.e. $\G_n$ does not have any cycle of length larger than 1 for all $n$.

\subsection{Concatenation of $\G_n^A, \G_n^B$}\label{subsec:3.2}

The following lemma gives some special condition which ensures that the index $i$ for $\G_{n+1}$ is zero for some balls.

\begin{lemma}\label{lem:conf} 
The following properties hold. (Similar properties hold for $i_B$.)
\begin{enumerate}
\item For $D \neq A_n, B_n, S_n, C_n$, $i(D,S_n) = 0.$
 If $A_n \ne S_n, C_n$, then $i(A_n, S_n) = i(\overline{A_n}, A_{n+1})$.


\item If $C_n \ne S_n$, then $i_A(S_n, S_n) = i(A_{n+1}, A_{n+1})$. 

If furthermore $A_{n+1} \ne C_{n+1},$ 
then $i(C_n, S_n) = i_B (S_{n+1}, B_{n+1}).$ 

\item
If $C_n = S_n$, say $A_{n+1} = S_{n+1}$ and $B_{n+1} \ne C_{n+1}$, 
then 
$i_A(S_n, S_n) = i_A(A_{n+1}, A_{n+1}).$
\end{enumerate}
\end{lemma}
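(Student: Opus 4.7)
The plan is to establish each identity by reducing it, via Lemma~\ref{lem:index}, to the vanishing of a single index, which the acyclic hypothesis forces through Lemma~\ref{lem:merged}.

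For part~(1), the first statement follows directly from Lemma~\ref{lem:merged}~(1): if $i(D,S_n)>0$ for some $D \notin \{A_n, B_n, C_n, S_n\}$, then $S_n$ and $D$ are adjacent in $\G_n$, producing a cycle in $\G_{n+1}$ and contradicting the standing acyclic assumption. The second statement uses Lemma~\ref{lem:index}~(1), applicable because $A_n \ne S_n, C_n$, to write $i(A_n,S_n) = i(\overline{A_n}, A_{n+1}) + i(\overline{A_n}, B_{n+1})$; Lemma~\ref{lem:merged}~(2) then kills the second summand, since otherwise $B_{n+1}$ would be adjacent to $\overline{A_n}$ in $\G_{n+1}$, yielding a cycle.

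For part~(2), Lemma~\ref{lem:index}~(2) expands $i_A(S_n, S_n) = i(A_{n+1}, A_{n+1}) + i(A_{n+1}, B_{n+1})$. Since $C_n \ne S_n$, both $A_{n+1}$ and $B_{n+1}$ differ from $S_{n+1}$, so each is always adjacent to $S_{n+1}$ by Lemma~\ref{2.11KL}~(3); a nonzero $i(A_{n+1}, B_{n+1})$ would then produce the triangular cycle $A_{n+1}{-}S_{n+1}{-}B_{n+1}{-}A_{n+1}$ in $\G_{n+1}$, contradicting acyclicity. For the second identity, the $i_B$-analog of Lemma~\ref{lem:index}~(2) gives $i(C_n, S_n) = i_B(S_{n+1}, A_{n+1}) + i_B(S_{n+1}, B_{n+1})$, so it suffices to show $i_B(S_{n+1}, A_{n+1}) = 0$. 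Applying Lemma~\ref{lem:index}~(2) or~(3) at level $n+1$ (depending on whether $C_{n+1} \ne S_{n+1}$) rewrites this index as $i(B_{n+2}, \overline{A_{n+1}})$; since $A_{n+1} \ne S_{n+1}$ (from $C_n \ne S_n$) and $A_{n+1} \ne C_{n+1}$ (hypothesis), Lemma~\ref{lem:merged}~(2) at level $n+1$ forces this index to vanish, otherwise yielding a cycle in $\G_{n+2}$.

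Part~(3) follows the same pattern. Lemma~\ref{lem:index}~(3) gives $i_A(S_n, S_n) = i_A(A_{n+1}, A_{n+1}) + i_A(A_{n+1}, B_{n+1})$; with $A_{n+1} = S_{n+1}$ the second summand is $i_A(S_{n+1}, B_{n+1})$. A level-$(n+1)$ reduction via Lemma~\ref{lem:index} rewrites it (as $i(A_{n+2}, \overline{B_{n+1}})$ in the main subcase $C_{n+1}\ne S_{n+1}$), and $B_{n+1} \ne S_{n+1}$ (automatic from $A_{n+1} = S_{n+1}$) together with the hypothesis $B_{n+1} \ne C_{n+1}$ permit Lemma~\ref{lem:merged}~(2) at level $n+1$ to force vanishing. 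The main technical subtlety sits in the second identity of part~(2) and the corresponding step in part~(3): one must split on whether $C_{n+1}$ coincides with $S_{n+1}$ in order to apply the correct formula of Lemma~\ref{lem:index} at the next level; the hypotheses $A_{n+1} \ne C_{n+1}$ and $B_{n+1} \ne C_{n+1}$ are exactly what ensure the extensions $\overline{A_{n+1}}$ and $\overline{B_{n+1}}$ are non-special, so the cycle arguments of Lemma~\ref{lem:merged} apply uniformly.
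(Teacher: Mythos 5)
Your proof is correct and follows essentially the same route as the paper: expand each index via Lemma~\ref{lem:index} and kill the cross term by invoking Lemma~\ref{lem:merged} together with acyclicity. The only difference is a harmless extra detour (rewriting $i_B(S_{n+1},A_{n+1})$ and $i_A(S_{n+1},B_{n+1})$ as level-$(n+2)$ indices before applying Lemma~\ref{lem:merged}~(2)); the paper applies Lemma~\ref{lem:merged}~(2) to these adjacencies directly.
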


\begin{proof}
(1) It follows from Lemma~\ref{lem:merged} (1) that  $i(D,S_n) = 0$  for $D \ne A_n, B_n, C_n, S_n$.
If $A_n \ne S_n, C_n$, 
then by Lemma~\ref{lem:merged} (2) $i(\overline{A_n}, B_{n+1}) = 0$.
Lemma~\ref{lem:index} (1) implies that $i(A_n, S_{n}) =  i(\overline{A_n}, A_{n+1})$.

(2) $C_n \ne S_n$ implies that $S_{n+1} \ne A_{n+1}$, $S_{n+1} \ne B_{n+1}$.
If $A_{n+1}$ and $B_{n+1}$ are adjacent, then $[S_{n+1} A_{n+1}  B_{n+1}  S_{n+1}]$ is a cycle in $\G_{n+1}$,
thus $i(A_{n+1}, B_{n+1}) = i(B_{n+1}, A_{n+1}) =0$. 
{Moreover,} if $A_{n+1} \ne C_{n+1}$, then by Lemma~\ref{lem:merged} (2) we deduce that $i_B (S_{n+1}, A_{n+1}) = 0$.
By Lemma~\ref{lem:index}, the second assertion follows.

(3) If $A_{n+1} = S_{n+1}$, $B_{n+1} \ne C_{n+1}$, then since $B_{n+1} \ne A_{n+1} = S_{n+1}$
it follows from Lemma~\ref{lem:merged} {(2)} that $i_A(S_{n+1}, B_{n+1}) = 0$.
Using Lemma~\ref{lem:index}, we complete the proof.
\end{proof}

\begin{definition} \label{def:sum}[Concatenation]    
Let $\G_1, \G_2$ be two edge-indexed graphs.
Let $m_i,\ell_i$ ($i=1,2$) be the indices of the edge and the loop coming out of $V$ in $\G_i$. 
When $m_1=m_2$ and $\ell_1=\ell_2$, we write them as $m$ and $\ell$.

\begin{enumerate}
\item For $1 \le i < m$, the $(i)$-concatenation $\G_1 \overset{V,V'}{\underset{i}{\bowtie}} \G_2$ of $\G_1$ and $\G_2$ at $(V,V')$ is the edge-indexed graph defined as follows : the vertex set is $V \G_1 \cup V \G_2$, where only the vertices $V \in \G_1$ and $V' \in \G_2$ are identified (all the other vertices of $\G_1$ and $\G_2$ are distinct in $\G_1 \overset{V, V'}{\underset{i}{\bowtie}} \G_2$). The oriented edge set is $E\G_1 \cup E\G_2$, where the loop at $V$ in $\G$ and the loop at $V$ in $\G_2$ are identified. 
The edge-index is $i, m-i$ for the non-loop edge at $V$ in $\G_1$, $\G_2$, respectively, and $\ell$ for the loop at $V$. Note that $\ell$ can be zero.
\begin{center}
\begin{tikzpicture}[every loop/.style={}]
  \tikzstyle{every node}=[inner sep=-1pt]
  \node (0) at (0,0) {$\cdots$ \ };
  \node (1) at (1,0) {$\circledcirc$}node [below = 20 pt] at (1,0) {$\G_1$};
  \node (2) at (2,0) {$\bullet$} node [below=5pt] at (2,0) {$V$};
  \path[-] 
	(0) edge (1)
	(1) edge node [above=4pt] {\quad $m$} (2);
  \path[-] (2) edge [loop right] node [above=8pt,left=2pt] {$\ell$} (2);

  \node (6) at (4,0) {$\bullet$} node [below=5pt] at (4,0) {$V'$} ;
  \node (7) at (5,0) {$\circledast$} node [below = 20 pt] at (5,0) {$\G_2$};
  \node (8) at (6,0) {\ $\cdots$};
  \path[-] 
	(7) edge (8)
	(6) edge node [above=4pt] {$m$ \quad} (7);
  \path[-] (6) edge [loop left] node [above=8pt,right=2pt] {$\ell$} (6);

  \node (10) at (8,0) {$\cdots$ \ };
  \node (11) at (9,0) {$\circledcirc$} ;
  \node (12) at (10,0) {$\bullet$} node [below=5pt] at (10,0) {$V=V'$} node [below = 18 pt] at (10,0) {$\G_1 \overset{V,V'} {\underset{i}  {\bowtie}} \G_2$ };
  \node (13) at (11,0) {$\circledast$}; 
  \node (14) at (12,0) {\ $\cdots$};
  \path[-] 
	(10) edge (11)
	(13) edge (14)
	(11) edge node [above=4pt] {\quad $i$} (12)
	(12) edge node [above=4pt] {\quad $m-i$ \quad} (13);
  \path[-] (12) edge [loop above] node [above=3pt,left=4pt] {$\ell$} (12);
\end{tikzpicture} 
\end{center}

\item For $1 \leq i \leq \ell_1, 1 \leq j \le \ell_2$, the $(i,j)$-concatenation $\G_1  \underset{i,j}  {\bowtie} \G_2$ of $\G_1$ and $\G_2$ at $(V, V')$ is the edge-indexed graph with vertex set $V\G_1 \sqcup V\G_2$ (the vertices $V$ in $\G_1$ and $\G_2$ are distinct in the concatenation) and the oriented edge set $E\G_1 \sqcup E \G_2 \sqcup \{e, \overline{e}\}$ with one new pair of oriented edges $e, \overline{e}$ between $V$ in $\G_1$ and $V$ in $\G_2$. 
The edge-index is $\ell_1-i$, $\ell_2-j$ for loops from $\G_1$ and $\G_2$, respectively, and $i(V, V')=i, i(V', V)=j$ for $e$.
\begin{center}
\begin{tikzpicture}[every loop/.style={}]
  \tikzstyle{every node}=[inner sep=-1pt]
  \node (0) at (0,0) {$\cdots$ \ };
  \node (1) at (1,0) {$\circledcirc$} node [below = 20 pt] at (1,0) {$\G_1$};
  \node (2) at (2,0) {$\bullet$} node [below=5pt] at (2,0) {$V$};
  \path[-] 
	(0) edge (1)
	(1) edge node [above=4pt] {\quad $m_1$} (2);
  \path[-] (2) edge [loop right] node [above=8pt,left=2pt] {$\ell_1$} (2);

  \node (6) at (4,0) {$\bullet$} node [below=5pt] at (4,0) {$V'$} ;
  \node (7) at (5,0) {$\circledast$} node [below = 20 pt] at (5,0) {$\G_2$};
  \node (8) at (6,0) {\ $\cdots$};
  \path[-] 
	(7) edge (8)
	(6) edge node [above=4pt] {$m_2$ \quad} (7);
  \path[-] (6) edge [loop left] node [above=8pt,right=2pt] {$\ell_2$} (6);

  \node (10) at (8,0) {$\cdots$\ };
  \node (11) at (9,0) {$\circledcirc$};
  \node (12) at (10,0) {$\bullet$} node [below=5pt] at (10,0) {$V$} node [below = 18 pt] at (10.5,0) {$\G_1 \overset{V,V'}{\underset{i,j}{\bowtie}} \G_2$ };

  \node (13) at (11,0) {$\bullet$} node [below=5pt] at (11,0) {$V'$};
  \node (14) at (12,0) {$\circledast$};
  \node (15) at (13,0) {\ $\cdots$};
  \path[-] 
	(10) edge (11)
	(14) edge (15)
	(12) edge node [above=4pt] {$i \quad j$} (13)
	(11) edge node [above=4pt] {\quad $m_1$ \;\;} (12)
	(13) edge node [above=4pt] {$m_2$ \quad} (14);
  \path[-] (12) edge [loop above] node [above=5pt] {$\ell_1-i$} (12);
  \path[-] (13) edge [loop above] node [above=5pt] {$\ell_2-j$} (13);
\end{tikzpicture} 
\end{center}
\end{enumerate}
\end{definition}
Let us call $V,V'$ the \emph{joining vertices} of the concatenation.
We will denote by $\G_1 \bowtie \G_2$ either an $i$-concatenation or an $(i,j)$-concatenation of $\G_1$ and $\G_2$. In this section, when we omit the vertices $V, V'$, all the concatenations of $\G^A_n$, $\G^B_n$ are at $V=C_n, V'=C_n$.

\begin{theorem}\label{thm:4p}
Acyclic Sturmian colorings enjoy the following properties. 
\begin{enumerate}
\item If $A_{n+1} \ne S_{n+1}$ and $A_{n+1} \ne C_{n+1}$, then
$\G^B_{n+1} \cong \G^B_{n}.$

\item If $S_n \neq C_n$ and $A_{n+1} = C_{n+1}$, then for some $i$,
$\G^B_{n+1} \cong \G^A_{n} \underset{i}  {\bowtie}  \G^B_{n}.$
\begin{center}
\begin{tikzpicture}[every loop/.style={}]
  \tikzstyle{every node}=[inner sep=-1pt]
  \node (0) at (0,0) {$\cdots$ \ };
  \node (1) at (1,0) {$\circledcirc$} node [below=5.5pt] at (1,0) {$S_n$} node [below = 20 pt] at (1,0) {$\G^A_n$};
  \node (2) at (2,0) {$\bullet$} node [below=5pt] at (2,0) {$C_n$};
  \path[-] 
	(0) edge (1)
	(1) edge node [above=4pt] {\quad $m$} (2);
  \path[-] (2) edge [loop right] node [above=8pt,left=2pt] {$\ell$} (2);

  \node (6) at (4,0) {$\bullet$} node [below=5pt] at (4,0) {$C_n$} ;
  \node (7) at (5,0) {$\circledast$} node [below=5.5pt] at (5,0) {$S_n$} node [below = 20 pt] at (5,0) {$\G^B_n$};
  \node (8) at (6,0) {\ $\cdots$};
  \path[-] 
	(7) edge (8)
	(6) edge node [above=4pt] {$m$ \quad} (7);
  \path[-] (6) edge [loop left] node [above=8pt,right=2pt] {$\ell$} (6);

  \node (10) at (8,0) {$\cdots$ \ };
  \node (11) at (9,0) {$\circledcirc$} node [below=5.5pt] at (9,0) {$A_{n+1}$}  ;
  \node (12) at (10,0) {$\bullet$} node [below=5pt] at (10,0) {$S_{n+1}$} node [below = 18 pt] at (10,0) {$\G^B_{n+1}$ };
  \node (13) at (11,0) {$\circledast$} node [below=5.5pt] at (11,0) {$B_{n+1}$} ; 
  \node (14) at (12,0) {\ $\cdots$};
  \path[-] 
	(10) edge (11)
	(13) edge (14)
	(11) edge node [above=4pt] {\quad $i$} (12)
	(12) edge node [above=4pt] {\quad $m-i$ \quad} (13);
  \path[-] (12) edge [loop above] node [above=3pt,left=4pt] {$\ell$} (12);
\end{tikzpicture} 
\end{center}
\item If (i) $A_{n+1} = S_{n+1}$ or (ii) $S_n = C_n$ and $A_{n+1} = C_{n+1}$, then for some $i,j$,
$\G^B_{n+1} \cong \G^A_{n} \underset{i,j}  {\bowtie}  \G^B_{n}.$
\begin{center}
\begin{tikzpicture}[every loop/.style={}]
  \tikzstyle{every node}=[inner sep=-1pt]
  \node (0) at (0,0) {$\cdots$ \ };
  \node (1) at (1,0) {$\circledcirc$} node [below = 20 pt] at (1,0) {$\G^A_n$};
  \node (2) at (2,0) {$\bullet$} node [below=5pt] at (2,0) {$C_n=S_n$};
  \path[-] 
	(0) edge (1)
	(1) edge node [above=4pt] {\quad $m_1$} (2);
  \path[-] (2) edge [loop right] node [above=8pt,left=2pt] {$\ell_1$} (2);

  \node (6) at (4,0) {$\bullet$} node [below=5pt] at (4,0) {$C_n=S_n$} ;
  \node (7) at (5,0) {$\circledast$} node [below = 20 pt] at (5,0) {$\G^B_n$};
  \node (8) at (6,0) {\ $\cdots$};
  \path[-] 
	(7) edge (8)
	(6) edge node [above=4pt] {$m_2$ \quad} (7);
  \path[-] (6) edge [loop left] node [above=8pt,right=2pt] {$\ell_2$} (6);

  \node (10) at (8,0) {$\cdots$\ };
  \node (11) at (9,0) {$\circledcirc$};
  \node (12) at (10,0) {$\bullet$} node [below=5pt] at (10,0) {$A_{n+1}$} node [below = 18 pt] at (10.5,0) {$\G^B_{n+1}$ };

  \node (13) at (11,0) {$\bullet$} node [below=5pt] at (11,0) {$B_{n+1}$};
  \node (14) at (12,0) {$\circledast$};
  \node (15) at (13,0) {\ $\cdots$};
  \path[-] 
	(10) edge (11)
	(14) edge (15)
	(12) edge node [above=4pt] {$i \quad j$} (13)
	(11) edge node [above=4pt] {\quad $m_1$ \;\;} (12)
	(13) edge node [above=4pt] {$m_2$ \quad} (14);
  \path[-] (12) edge [loop above] node [above=5pt] {$\ell_1-i$} (12);
  \path[-] (13) edge [loop above] node [above=5pt] {$\ell_2-j$} (13);
\end{tikzpicture} 
\end{center}
\end{enumerate}
By symmetry, similar properties hold for $\G^A_{n+1}$.

The isomorphism of edge-indexed graphs from $\G^B_{n+1}$ to $\G^B_{n}$ or $\G^{A}_{n} \bowtie \G^B_{n} $ is given by the restriction of $(n+1)$-balls to $n$-balls.
$D \mapsto \underline D$.
\end{theorem}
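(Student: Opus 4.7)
The common tool for all three parts is the restriction map $\pi \colon D \mapsto \underline{D}$ from $(n+1)$-balls to $n$-balls. Apart from $S_{n+1}\mapsto C_n$ and $A_{n+1},B_{n+1}\mapsto S_n$, every $(n+1)$-ball has a unique restriction, because only $S_n$ extends in two ways. By Lemma~\ref{lem:index}, whenever $D,E$ and $\underline{D},\underline{E}$ are all non-special, $i(D,E)=i(\underline{D},\underline{E})$, so $\pi$ preserves the edge-indexed structure away from $S_n$. The task in each case reduces to (i) deciding which of $A_{n+1}, B_{n+1}$ lies in $V\G^B_{n+1}$, and (ii) bookkeeping the edges and loops incident to $S_n$ and $C_n$.

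First I would observe that $B_{n+1}$ always belongs to $V\G^B_{n+1}$ by Lemma~\ref{2.11KL}(3), whereas the membership of $A_{n+1}$ is exactly what distinguishes the three situations. Outside $\{A_{n+1},B_{n+1},S_{n+1}\}$, the map $\pi$ is a bijection onto $V\G^B_n\setminus\{S_n,C_n\}$, so only the local picture near $S_n$ and $C_n$ needs care.

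For part (1), I argue that $A_{n+1}$ is disconnected from $S_{n+1}$ in $\G^B_{n+1}$: since $A_{n+1}\ne S_{n+1}, C_{n+1}$, Lemma~\ref{lem:merged}(2) combined with acyclicity forces $i_B(S_{n+1},A_{n+1})=0$, and there is no other path reaching $A_{n+1}$ from $S_{n+1}$ with positive $i_B$-index. Then $\pi$ is a bijection $V\G^B_{n+1}\to V\G^B_n$, and Lemma~\ref{lem:conf}(1) identifies the edge at $S_n$ with the restriction of the edge at $S_{n+1}$, yielding $\G^B_{n+1}\cong \G^B_n$. For part (2), $A_{n+1}=C_{n+1}$ combined with Lemma~\ref{2.11KL}(4) gives $i_B(S_{n+1},A_{n+1})>0$, so both $A_{n+1}$ and $B_{n+1}$ lie in $V\G^B_{n+1}$; under $\pi$ both collapse to $S_n$, while $\pi(S_{n+1})=C_n\neq S_n$ remains distinct. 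The resulting picture near $S_{n+1}$ is exactly the $i$-concatenation of $\G^A_n$ (which governs the structure inside $A_{n+1}$) and $\G^B_n$ (which governs the structure inside $B_{n+1}$) at the pair $(C_n, C_n)$, with $i = i_B(S_{n+1}, A_{n+1})$ extracted via Lemma~\ref{lem:index}(2) and Lemma~\ref{lem:conf}(2). For part (3), $S_n=C_n$ in both subcases, so $\pi$ collapses $\{A_{n+1},B_{n+1},S_{n+1}\}$ into the single vertex $S_n=C_n$; the two classes $A_{n+1},B_{n+1}$ survive as the pair of joining vertices of the $(i,j)$-concatenation, the edges $e,\overline{e}$ between them arise from the adjacency between $A_{n+1}$ and $B_{n+1}$ in $\G_{n+1}$, and Lemma~\ref{lem:index}(3), Lemma~\ref{lem:conf}(3) translate the remaining loop-indices to $\ell_1-i$ and $\ell_2-j$ as in Definition~\ref{def:sum}.

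The main obstacle will be the numerical bookkeeping near $S_n$: checking in each case that the loop-index at $S_n$ (resp.\ $C_n$), the edge-index across $S_n$--$C_n$, and the index along $A_{n+1}$--$B_{n+1}$ match exactly the prescription of Definition~\ref{def:sum}, while invoking acyclicity at the right moments to eliminate stray edges (chiefly $i(A_{n+1},B_{n+1})$ via Lemma~\ref{lem:conf}(2)) that would otherwise obstruct the concatenation structure. The symmetric statements for $\G^A_{n+1}$ follow by swapping $A\leftrightarrow B$ throughout.
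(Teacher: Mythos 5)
Your proposal takes essentially the same route as the paper's proof: the isomorphism is the restriction map $D \mapsto \underline{D}$, the case analysis is governed by whether $A_{n+1}$ coincides with $S_{n+1}$ or $C_{n+1}$ (equivalently, whether $A_{n+1}$ survives in $\G^B_{n+1}$), and the index bookkeeping is carried out with Lemma~\ref{lem:index} together with the vanishing statements coming from Lemma~\ref{lem:merged}(2) and acyclicity, which is exactly the content of Lemma~\ref{lem:conf}. The paper's argument is just a more explicit version of the same computation, in particular spelling out the subcase $S_n=C_n$ (where $B_{n+1}=S_{n+1}$) inside part (1), which your sketch treats uniformly but correctly.
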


\begin{proof}
We show the edge-indexed graph isomorphism between $\G^B_{n+1}$ and $\G^B_{n}$ or $\G^{A}_{n} \bowtie \G^B_{n} $ by the extension of $n$-ball or the restriction of $(n+1)$-balls.

(1) If $S_n \ne C_n$, then by Lemma~\ref{lem:conf} (2),
$i (C_n, S_n) = i_B(S_{n+1}, B_{n+1})$ and $i_B (S_n, S_n) = i(B_{n+1}, B_{n+1})$. 
Also, by Lemma~\ref{lem:conf} (1), $i (B_n, S_n) = i( \overline{B_n}, B_{n+1})$ if $B_n \ne S_n, C_n$. Since there is no other class of $n$-balls adjacent to $S_n$ by Lemma~\ref{2.11KL} (2), we get
$\G^B_{n+1} \cong \G^B_{n}.$

If $S_n = C_n$, then $B_{n+1} = S_{n+1}$ and $A_{n+1} \ne C_{n+1}$.
Thus by Lemma~\ref{lem:conf} (3)
$i_B (S_n, S_n) = i_B(B_{n+1}, B_{n+1})$.
Also, by Lemma~\ref{lem:conf} (i), $i (B_n, S_n) = i( \overline{B_n}, B_{n+1})$ if $B_n \ne S_n= C_n$.
Since there is no other adjacent $n$-ball to $S_n$, we get
$\G^B_{n+1} \cong \G^B_{n}.$

(2) If $S_n \ne C_n$ and $A_{n+1} = C_{n+1}$, then 
 by Lemma~\ref{lem:conf} (2)
$i_B(S_n,S_n) = i(B_{n+1},B_{n+1}).$
By Lemma~\ref{lem:index} (2),
$$i(C_n, S_n) = i_B (S_{n+1}, A_{n+1}) + i_B (S_{n+1}, B_{n+1}),$$ 
where $i_B (S_{n+1}, A_{n+1}) >0$, $i_B (S_{n+1}, B_{n+1}) >0$ from Lemma~\ref{2.11KL} (4) and (1), respectively.
This case corresponds to
$$\G^B_{n+1} \cong \G^A_n \underset{i}  {\bowtie}  \G^B_n,$$
where 
$i =  i_B (S_{n+1}, A_{n+1}). $

(3) If $A_{n+1} = S_{n+1}$ (thus $S_n = C_n$), by Lemma~\ref{lem:index} (3), we have
\begin{align*}
i_A(S_n, S_n) &= i_B(A_{n+1}, A_{n+1}) +i_B(A_{n+1},B_{n+1}), \\
i_B(S_n, S_n) &= i(B_{n+1}, A_{n+1}) +i(B_{n+1},B_{n+1}).
\end{align*}
where 
$ i_B (A_{n+1}, B_{n+1}) >0$, $i (B_{n+1}, A_{n+1}) >0$ by Lemma~\ref{2.11KL} (1). 
This case corresponds to
$$\G^B_{n+1} \cong \G^A_n  \underset{i,j}  {\bowtie} \G^B_n,$$
where 
$i =  i_B (A_{n+1}, B_{n+1})$ and $ j =  i (B_{n+1}, A_{n+1}). $

Suppose that $A_{n+1} \ne S_{n+1}$, $S_n = C_n$ and $A_{n+1} = C_{n+1}$.
By Lemma~\ref{lem:index} (3) we have
\begin{align*}
i_A(S_n, S_n) &= i(A_{n+1}, A_{n+1}) +i(A_{n+1},B_{n+1}), \\
i_B(S_n, S_n) &= i_B(B_{n+1}, A_{n+1}) +i_B(B_{n+1},B_{n+1}).
\end{align*}
where 
$ i(A_{n+1}, B_{n+1}) >0$, $i_B (B_{n+1}, A_{n+1}) >0$ by Lemma~\ref{2.11KL} (1) and (4) respectively. 
This case corresponds to
$$\G^B_{n+1} \cong \G^A_n  \underset{i,j}  {\bowtie} \G^B_n,$$
where 
\begin{equation*}i =  i (A_{n+1}, B_{n+1}), \qquad j =  i_B (B_{n+1}, A_{n+1}). \qedhere \end{equation*}
\end{proof}

\subsection{Induction algorithm} In this section, we prove Theorem~\ref{thm:main}.
\begin{definition}
We define $n_k$ to be the sequence of integers $n_k \ge 0$ such that at least one of $A_{n_k}$ or $B_{n_k}$ is identical to $S_{n_k}$ or $C_{n_k}$. 
By Lemma~\ref{lem:n0}, we have $n_k = k$ for $0 \leq k \leq \NN$. 
\end{definition}

The constant $K$ and the sequence $(n_k)$ indicate for which $n$'s either $\G^A_n$ or $\G^B_n$ becomes strictly larger than $\G^A_{n-1}, \G^B_{n-1}$.
\begin{theorem}\label{thm:4} 
Let $\phi$ be an acyclic Sturmian coloring.   

\begin{enumerate}
\item For $0 \le n < \NN$, we have $A_{n}=C_{n}=S_{n}$, thus for some $i, j$
$$\G^A_{n} \cong \G^A_{n-1}, \qquad \G^B_{n} \cong \G^A_{n-1} \underset{i,j}  {\bowtie}  \G^B_{n-1}.$$

\item For $n = \NN$, we have the following cases:
\begin{enumerate}
\item If $A_{n}=S_{n}$, $B_{n} =C_{n}$, then for some $i <  i'$, $j$
$$\G^A_{n} \cong \G^A_{n-1} \underset{i,j}  {\bowtie}  \G^B_{n-1},  \qquad \G^B_{n} \cong \G^A_{n-1}  \underset{i',j}  {\bowtie} \G^B_{n-1}.$$
\item If $B_{n} =S_{n}$, then $C_{n} = \overline{B_{n-1}}$ and 
$$\G^A_{n} \cong \G^A_{n-1} \underset{i,j}  {\bowtie}  \G^B_{n-1},  \qquad \G^B_{n} \cong \G^B_{n-1}.$$
\end{enumerate}

\item
For every $n > \NN$,
$$ \G^A_{n} \cong \G^A_{n-1}, \  \G^B_{n} \cong \G^B_{n-1}, \qquad \qquad \mathrm{if} \;\;n \neq n_k,$$
$$\G^A_{n} \cong \G^A_{n-1} \bowtie \G^B_{n-1}, \  \G^B_{n} \cong \G^B_{n-1} \ \text{ or } \ \G^A_{n} \cong \G^A_{n-1}, \  \G^B_{n} \cong \G^A_{n-1}  \bowtie \G^B_{n-1}, \quad \mathrm{if} \ n= n_k.$$
Here, $\bowtie$ is either $i$-concatenation or $(i,j)$-concatenation.
%

   


\end{enumerate}

\end{theorem}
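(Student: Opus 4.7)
The plan is to proceed by induction on $n$, applying Theorem~\ref{thm:4p} (and its symmetric version obtained by swapping the roles of $A$ and $B$) at each transition $n-1 \to n$. At each step, the hypothesis to check is the coincidence pattern among $\{A_n, B_n, S_n, C_n\}$, which is controlled by Lemma~\ref{lem:n0} and the definition of the sequence $n_k$.

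For part (1), I would fix $0 \le n < \NN$. By Lemma~\ref{lem:n0}, $A_n = C_n = S_n$ and (for $n \ge 1$) $A_{n-1} = C_{n-1} = S_{n-1}$, while $n = 0$ uses the empty-ball convention $A_{-1} = C_{-1} = S_{-1}$. Since $A_n = S_n$, the hypothesis of Theorem~\ref{thm:4p}(3) is satisfied and yields $\G^B_n \cong \G^A_{n-1} \underset{i,j}{\bowtie} \G^B_{n-1}$. For $\G^A_n$, the conditions $B_n \ne S_n$ (since $b_0(\phi) = 2$ gives two classes) and $B_n \ne C_n = A_n$ place us in the symmetric form of Theorem~\ref{thm:4p}(1), giving $\G^A_n \cong \G^A_{n-1}$.

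For part (2), at $n = \NN$ we still have $S_{n-1} = C_{n-1}$, but $A_n = C_n = S_n$ no longer holds. The second half of Lemma~\ref{lem:n0} splits this into two subcases. In subcase (a) ($A_n = S_n$, $B_n = C_n$) Theorem~\ref{thm:4p}(3) applies to $\G^B_n$ via $A_n = S_n$, while its symmetric version applies to $\G^A_n$ via $S_{n-1} = C_{n-1}$ together with $B_n = C_n$; the inequality $i < i'$ of the concatenation indices follows from the choice in Lemma~\ref{2.11KL}(1) that $A_{n+1}$ contains strictly more balls of class $A_n$ than $B_{n+1}$ does. In subcase (b) ($B_n = S_n$), I would first establish $C_n = \overline{B_{n-1}}$: since $B_{n-1} \ne S_{n-1}$, the class $\overline{B_{n-1}}$ is a well-defined unique extension, and identifying a center of $S_{n+1}$ with a vertex whose $(n-1)$-ball is $B_{n-1}$ (using Lemma~\ref{2.11KL}(1) applied at level $n-1$ together with $A_n \ne S_n$) yields the claim. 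Once $C_n = \overline{B_{n-1}}$ is known, it differs from $A_n$ (which extends $S_{n-1} \ne B_{n-1}$); hence Theorem~\ref{thm:4p}(1) gives $\G^B_n \cong \G^B_{n-1}$ and the symmetric form of Theorem~\ref{thm:4p}(3) (via $B_n = S_n$) gives $\G^A_n \cong \G^A_{n-1} \underset{i,j}{\bowtie} \G^B_{n-1}$.

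For part (3), if $n \ne n_k$, then by definition of $n_k$ none of $A_n, B_n$ equals $S_n$ or $C_n$, so Theorem~\ref{thm:4p}(1) and its symmetric counterpart both apply to give $\G^A_n \cong \G^A_{n-1}$ and $\G^B_n \cong \G^B_{n-1}$. If $n = n_k$ with $n > \NN$, the key lemma to establish is that not both $A_n$ and $B_n$ can simultaneously belong to $\{S_n, C_n\}$: if they did, then since $A_n \ne B_n$ we would have $\{A_n, B_n\} = \{S_n, C_n\}$, which is the configuration of subcase (a) above, and a short trace through Theorem~\ref{thm:4p} at level $n-1$ (analyzing how $A_n, B_n, S_n, C_n$ arise from the extensions of $A_{n-1}, B_{n-1}, S_{n-1}, C_{n-1}$) forces $S_{n-1} = C_{n-1}$; Lemma~\ref{lem:n0} then gives $n - 1 < \NN$, contradicting $n > \NN$. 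With only one of $A_n, B_n$ in $\{S_n, C_n\}$, exactly one of Theorem~\ref{thm:4p}(2) or (3) triggers a concatenation on the corresponding side, while Theorem~\ref{thm:4p}(1) keeps the other side unchanged. The main obstacle is precisely this tracing argument ruling out the simultaneous coincidence $\{A_n, B_n\} = \{S_n, C_n\}$ for $n > \NN$; the identification $C_n = \overline{B_{n-1}}$ in part (2b) is the secondary technical point, while everything else is a mechanical invocation of Theorem~\ref{thm:4p}.
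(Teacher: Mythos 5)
Your overall strategy is the same as the paper's: induct on $n$ and, at each step, determine the coincidence pattern among $A_n, B_n, S_n, C_n$ so that the appropriate case of Theorem~\ref{thm:4p} (or its $A\leftrightarrow B$ symmetric version) applies. Parts (1), (2) and the $n\neq n_k$ half of part (3) are handled essentially as in the paper.

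The genuine gap is in part (3) at $n=n_k>\NN$, precisely at the step you yourself flag as the main obstacle. You propose to rule out the simultaneous coincidence $\{A_n,B_n\}=\{S_n,C_n\}$ by arguing that it forces $S_{n-1}=C_{n-1}$ and then invoking Lemma~\ref{lem:n0} to conclude $n-1<\NN$. This does not work: Lemma~\ref{lem:n0} characterizes the \emph{triple} coincidence $A_{n-1}=S_{n-1}=C_{n-1}$, whereas the equality $S_{n-1}=C_{n-1}$ alone is equivalent to $S_n\in\{A_n,B_n\}$ and occurs for arbitrarily large $n$ --- it occurs at every $n_k$ where an $(i,j)$-concatenation takes place (for instance $S_2=C_2$ while $\NN=0$ in the paper's first example). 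So deriving $S_{n-1}=C_{n-1}$ yields no contradiction; moreover your ``short trace'' as described does no work, since $S_{n-1}=C_{n-1}$ already follows trivially from $S_n\in\{A_n,B_n\}$.

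What is actually needed is the stronger conclusion $A_{n-1}=S_{n-1}$. Suppose $A_n=S_n$ and $B_n=C_n$ with $n>\NN$. By Lemma~\ref{2.11KL}~(4) and Lemma~\ref{lem:merged}~(1) (this is where acyclicity enters), $S_n=A_n$ is adjacent only to $A_n$ and $B_n$; since both of these restrict to $S_{n-1}$, the only class of $(n-1)$-balls adjacent to $A_n$ is $S_{n-1}$. But Lemma~\ref{2.11KL}~(1) says $A_n$ is always adjacent to $A_{n-1}$, so $A_{n-1}=S_{n-1}=C_{n-1}$, and \emph{now} Lemma~\ref{lem:n0} gives $n-1<\NN$, a contradiction. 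You need to insert this adjacency argument; without it the case $n=n_k>\NN$ is not closed. (A smaller point of the same kind: your derivation of $C_n=\overline{B_{n-1}}$ in (2)(b) also silently needs Lemma~\ref{lem:merged}~(1) to exclude classes adjacent to $S_n$ other than $A_n,B_n,C_n$ before the identification $\overline{B_{n-1}}=C_n$ can be made.)
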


\begin{proof}
(1) For $0 \le n < \NN$, by Lemma~\ref{lem:n0}, $A_n=C_n=S_n \ne B_n$.
Theorem~\ref{thm:4p} (1) and (3) implies that $\G^A_{n} \cong \G^A_{n-1} $ and $\G^B_{n} \cong \G^A_{n-1} \underset{i,j}  {\bowtie}  \G^B_{n-1}$ for some $i,j$. 

(2) Let $n = \NN$. Then $C_{n-1} = S_{n-1}$ which implies either (a) $S_n = A_n$ or (b) $S_n = B_n$.

(a) $S_n = A_n$ : we have $C_n \ne A_n = S_n$ by the definition of $\NN$. 
By Lemma~\ref{2.11KL} (4) and Lemma~\ref{lem:merged} (1) we have $B_n = C_n$.  
Thus, Theorem~\ref{thm:4p} (3) implies that 
$\G^A_{n} \cong \G^A_{n-1} \underset{i',j}  {\bowtie}  \G^B_{n-1}$ and
$\G^B_{n} \cong \G^A_{n-1} \underset{i,j}  {\bowtie}  \G^B_{n-1}$ for some $i',i,j$. 

(b) $S_n = B_n \ne A_n$: by the choice of $A_0$ and $B_0$, $n=\NN \ge 1$.
$S_n$ is adjacent to distint balls $A_n$ and $\overline{B_{n-1}}$.
By Lemma~\ref{lem:merged} (1), we have $C_n = \overline{B_{n-1}}\ne A_n$,
thus Theorem~\ref{thm:4p} (1)  and (3) implies that $\G^A_{n} \cong \G^A_{n-1} \underset{i,j}  {\bowtie}  \G^B_{n-1}$ for some $i,j$ and $\G^B_{n} \cong \G^B_{n-1}$.

(3) Assume that $n > \NN$.
If $S_{n-1} = C_{n-1}$, then we have either $A_{n} = S_{n}$ or $B_{n} = S_{n}$.
Suppose that $B_{n} \ne A_{n} = S_{n}$.
If $B_{n} = C_{n}$, then 
by Lemma~\ref{2.11KL} (4) and Lemma~\ref{lem:merged} (1), $A_n$ is adjancent to $A_n$ and $B_n$ only. By Lemma~\ref{2.11KL} (1), $A_n$ is adjacent to $A_{n-1}$, which implies that $A_{n-1} = S_{n-1} = C_{n-1}$, which contradicts $n \ge \NN +1$. 
Thus $B_{n} \ne C_{n}$ and Theorem~\ref{thm:4p} (1) and  (3) implies that $\G^A_{n} \cong \G^A_{n-1} $ and $\G^B_{n} \cong \G^A_{n-1} \underset{i,j}  {\bowtie}  \G^B_{n-1}$ for some $i,j$. 
For the case $A_{n} \ne B_{n} = S_{n}$, we apply the same argument.

If $S_{n-1} \neq C_{n-1}$, then we have $A_{n}, B_{n} \ne S_{n}$
Suppose that $B_{n} \ne  A_{n} = C_{n}$ or $A_{n} \ne  B_{n} = C_{n}$.
Then by Theorem~\ref{thm:4p} (1), (2) for some $i$,
$\G^A_{n} \cong \G^A_{n-1} $, $\G^B_{n} \cong \G^A_{n-1} \underset{i}  {\bowtie}  \G^B_{n-1}$
or
$\G^A_{n} \cong \G^A_{n-1} \underset{i}  {\bowtie}  \G^B_{n-1}$, $\G^B_{n} \cong \G^B_{n-1}.$
In case of $A_{n}, B_{n} \ne C_{n}$, by Theorem~\ref{thm:4p} (1),
$\G^A_{n} \cong \G^A_{n-1} $ and $\G^B_{n} \cong \G^B_{n-1}.$
This is the case of $n \ne n_k$.
\end{proof}

For $\alpha \in \{A, B\}$, define $\overline{\alpha}=B$ if $\alpha=A$ and vice versa. Recall from Section~\ref{ak} that $\alpha_k$ satisfies
$|V \G^{\alpha_k}_{n_k}|> |V\G^{\overline{\alpha_k}}_{n_k}|$ if $k \neq K$ and $\alpha_K = A$.
The following proposition shows how $n_k$ is related to $\alpha_k$ and $i_k, j_k$. 
It will be used in Theorem~\ref {thm:bounded1}.

\begin{proposition}\label{lem:nk}     
The special ball $S_{n_k}$ is a vertex of degree 1 in $\G^{\overline\alpha_k}_{n_k}$.
Put $m =|V\G^{\overline\alpha_k}_{n_k}|$.
For $i= 0,1, \cdots, m-1$, the vertex of $\G^{\overline\alpha_k}_{n_k}$ of distance $i$ from $S_{n_k}$, which we denote by $[i]$, is the central $n_k$-ball of $S_{n_k+i}$.
Therefore
$$ n_{k+1} = \begin{cases} n_{k} + m -1 , &\text{ if there is a $(i)$-concatenation at } n_{k+1},\\
n_{k} + m, &\text{ if there is a $(i,j)$-concatenation at } n_{k+1}.
\end{cases} $$
Moreover, none of the vertices of $\G^{\alpha_k}_{n_k-1}$ is the center of a special $n$-ball if $n_k + 1 \le n \le n_{k+1}$ for $(i)$-concatenation at $n_{k}$
and if $n_k  \le n \le n_{k+1}$ for $(i,j)$-concatenation at $n_{k}$.
\end{proposition}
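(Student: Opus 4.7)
The plan is to track the positions of the special ball $S_n$ and the central ball $C_n$ as $n$ increases from $n_k$ to $n_{k+1}$, using the fact that the edge-indexed graphs $\G^A_n, \G^B_n$ stay constant in this range.

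First I would set up the following basic observations. By the definition of the sequence $(n_k)$, for $n_k < n < n_{k+1}$ neither $A_n$ nor $B_n$ equals $S_n$ or $C_n$, so Theorem~\ref{thm:4}(3) yields $\G^A_n \cong \G^A_{n-1}$ and $\G^B_n \cong \G^B_{n-1}$ as edge-indexed graphs, with the isomorphism given by the restriction $D \mapsto \underline D$. Under this restriction map, the special vertex $S_n \in V\G^{\overline{\alpha_k}}_n$ is sent to $\underline{S_n} = C_{n-1}$, while $C_n$ is sent to $\underline{C_n}$, which by definition is the central $(n-1)$-ball of the central ball of $S_{n+1}$, i.e., an adjacent vertex on the ``far'' side of $C_{n-1}$ from $S_{n-1}$. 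Iterating these restriction isomorphisms identifies $S_{n_k+i}$ with the vertex $[i]$ of $\G^{\overline{\alpha_k}}_{n_k}$, where $[i]$ is the central $n_k$-ball of $S_{n_k+i}$. This establishes the labeling claim.

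Next I would prove that $S_{n_k}$ has degree $1$ (off the loop) in $\G^{\overline{\alpha_k}}_{n_k}$ by a case analysis of the concatenation at step $n_k$ using Theorem~\ref{thm:4p}. By the choice of $\alpha_k$, only $\G^{\alpha_k}$ grows at $n_k$ via a concatenation $\G^A_{n_k-1}\bowtie\G^B_{n_k-1}$, whereas $\G^{\overline{\alpha_k}}_{n_k}\cong\G^{\overline{\alpha_k}}_{n_k-1}$. Under the restriction isomorphism, $S_{n_k}$ in $\G^{\overline{\alpha_k}}_{n_k}$ corresponds to $C_{n_k-1}$ in $\G^{\overline{\alpha_k}}_{n_k-1}$; tracing through the three cases of Theorem~\ref{thm:4p} shows that $C_{n_k-1}$ is an endpoint of $\G^{\overline{\alpha_k}}_{n_k-1}$ (its only non-loop neighbor being the next vertex along the chain provided by Theorem~\ref{thm:1.1}). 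Combining with the observation above that $S_n$ then walks one vertex per step along the non-loop edges of $\G^{\overline{\alpha_k}}_{n_k}$ in the direction away from the starting endpoint, the vertices $[0], [1], \dots, [m-1]$ are exhausted in order, each of them being the central $n_k$-ball of the corresponding $S_{n_k+i}$.

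Finally I would derive the formula for $n_{k+1}$. After $m-1$ steps, the special ball arrives at the far endpoint $[m-1]$ of $\G^{\overline{\alpha_k}}_{n_k}$. At this moment $C_{n_k+m-1}$ must be a vertex of $\G^{\alpha_k}_{n_k}\setminus\G^{\overline{\alpha_k}}_{n_k}$, which triggers a new concatenation. If this concatenation is of $(i)$-type, the endpoint $[m-1]$ is identified with the joining vertex on the other side, so $C_{n_k+m-1}$ already sits in the previous $\G^{\alpha_k}$, forcing $n_{k+1}=n_k+m-1$. If instead it is of $(i,j)$-type, a brand-new edge is introduced from $[m-1]$ to the $\G^{\alpha_k}$-side, so one additional step is needed before any of $A,B$ becomes central/special, yielding $n_{k+1}=n_k+m$. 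Throughout this walk none of the vertices of $\G^{\alpha_k}_{n_k}$ (which is $\G^{\alpha_k}_{n_k-1}$ in the statement) serves as a center of $S_n$, because $C_{n_k+i}$ only leaves $\G^{\overline{\alpha_k}}_{n_k}$ at the very last step. The main obstacle is the careful case analysis in the second paragraph: Theorem~\ref{thm:4p} has three subcases, each producing a slightly different embedding of $S_{n_k}$ in $\G^{\overline{\alpha_k}}_{n_k}$, and one must verify uniformly that $S_{n_k}$ lands at an extreme endpoint and that the walk described in Observation (b) does not hit any early $n_j$.
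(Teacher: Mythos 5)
Your overall strategy coincides with the paper's: identify $S_{n_k}$ with the end vertex $C_{n_k-1}$ of $\G^{\overline\alpha_k}_{n_k-1}$ under the restriction isomorphism, and then let the special ball walk one vertex per step along $\G^{\overline\alpha_k}_{n_k}$ until it reaches the far end. However, your endgame rests on a false claim. You assert that at time $n=n_k+m-1$ the ball $C_{n}$ ``must be a vertex of $\G^{\alpha_k}_{n_k}\setminus\G^{\overline{\alpha_k}}_{n_k}$'' and that it ``leaves $\G^{\overline{\alpha_k}}_{n_k}$ at the very last step.'' This never happens: by Lemma~\ref{2.11KL}~(4) the classes $S_n$ and $C_n$ are always adjacent, and (as recorded in the introduction) $C_n$ is a vertex of \emph{both} $\G^A_n$ and $\G^B_n$ for every $n$ --- indeed, if $C_n$ were missing from $\G^{\overline\alpha_k}_n$ then $S_{n+1}=\overline{C_n}$ would be missing from $\G^{\overline\alpha_k}_{n+1}$, which is absurd since $S_{n+1}$ is by definition a vertex of both graphs. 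The correct dichotomy at the far end is different: the only neighbours of $[m-1]$ in $\G^{\overline\alpha_k}_{n_k}$ are $[m-2]$ and $[m-1]$ itself, so either $C_{n_k+m-1}=S_{n_k+m-1}$ (whence $S_n=C_n$ and Theorem~\ref{thm:4p}~(3) forces an $(i,j)$-concatenation at $n_{k+1}=n_k+m$), or $C_{n_k+m-1}$ is the extension of $S_{n_k+m-2}=[m-2]$, i.e.\ equals $A_{n_k+m-1}$ or $B_{n_k+m-1}$, which by the very definition of the sequence $(n_j)$ makes $n_{k+1}=n_k+m-1$, with an $(i)$-concatenation by Theorem~\ref{thm:4p}~(2). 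Your mechanism (``$C$ escapes to the other graph'') happens to output the right formula but for the wrong reason, and it also cannot justify the final claim of the proposition, in particular the asymmetry between the ranges $n\ge n_k+1$ and $n\ge n_k$, which comes from whether the joining vertices are identified (in an $(i)$-concatenation $[0]=C_{n_k-1}$ is a vertex of $\G^{\alpha_k}_{n_k-1}$ and is the special centre at $n=n_k$) or kept distinct (in an $(i,j)$-concatenation).

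A secondary, smaller gap: in your first paragraph the assertion that $\underline{C_n}$ is ``an adjacent vertex on the far side of $C_{n-1}$ from $S_{n-1}$'' is exactly the inductive step that needs proof. The paper establishes it by noting that $C_n$ is adjacent to $S_n$, lies in $\G^{\overline\alpha_k}_n$, and (since no concatenation occurs for $n_k<n<n_{k+1}$) is distinct from $S_n$ and from $A_n,B_n$, so among the at most two non-loop neighbours of $[i]$ it can only be $[i+1]$. You should supply this argument rather than fold it into an ``i.e.''
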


\begin{proof}
We may assume that $\alpha_k = A$.  
From the definition of concatenation, it is immediate that $C_{n_k-1}$ is a vertex of degree 1 in $\G^{B}_{n_k}$. 
By Theorem~\ref{thm:4}, $S_{n_k} \ne C_{n_k}$.
By Lemma~\ref{2.11KL} (4), the only adjacent vertex of $[0]= S_{n_k}$ is $[1]$, which is equal to $C_{n_k}$. 
By the canonical projection from $\G^{B}_{n_k}$ to $\G^{B}_{n_k+1}$, the vertices $[0], [1]$ are mapped to $B_{n_k+1}, S_{n_k+1}$ respectively, thus again Lemma~\ref{2.11KL} (4) implies that $[2]$ is mapped to $C_{n_k+1}$.
Inductively, for $1 \le i \le m-1$ the vertex $[i]$ is mapped to $S_{n_k+i}$ by the canonical projection from $\G^{B}_{n_k}$ to $\G^{B}_{n_k+i}$.
By Theorem~\ref{thm:4}, we continue this procedure until $n_{k+1}$.  
\end{proof}

\begin{example}
Let $s_i, t_i$, $i = 1,2,3$ be integers satisfying $t_i \ge 1$, $s_i \ge 0$, $s_i + 2 t_i = d$ for each $i = 1,2,3$ and $s_1 \ne s_2$. Example 7 in \cite{KL1} is the following:

\begin{center}
\renewcommand{\arraystretch}{2}
\begin{tabular}{rl}
\raisebox{1.2\height}{$Y$ :} &
\raisebox{.8\height}{\begin{tikzpicture}[every loop/.style={},scale=.8]
  \tikzstyle{every node}=[inner sep=-1pt]

  \node (b0) at (-6,0) {$\cdots$} ;
  \node (b1) at (-5,0) {$\bullet$} node [above=6pt] at (-5,0) {$c$};
  \node (b3) at (-3,0) {$\bullet$} node [above=6pt] at (-3,0) {$d$};
  \node (b5) at (-1,0) {$\bullet$} node [above=6pt] at (-1,0) {$c$};
  \node (b7) at (1,0) {$\bullet$} node [above=6pt] at (1,0) {$c$};
  \node (b9) at (3,0) {$\bullet$} node [above=6pt] at (3,0) {$d$};
  \node (b11) at (5,0) {$\bullet$} node [above=6pt] at (5,0) {$c$};
  \node (b13) at (7,0) {$\bullet$} node [above=6pt] at (7,0) {$d$};
  \node (b14) at (8,0) {$\cdots$} ;

  \path[-]   
		 (b0) edge (b1)    
		 (b1) edge (b3)    
 		 (b3) edge (b5)    
 		 (b5) edge (b7)    
 		 (b7) edge (b9)    
		 (b9) edge (b11)
 		 (b11) edge (b13)    
 		 (b13) edge (b14);
\end{tikzpicture}}\\
\raisebox{1.5\height}{$X$ :} &
\begin{tikzpicture}[every loop/.style={},scale=.8]
  \tikzstyle{every node}=[inner sep=-1pt]

  \node (0) at (-6,0) {$\cdots$};
  \node (1) at (-5,0) {$\circ$};
  \node (2) at (-4,0) {$\bullet$};
  \node (3) at (-3,0) {$\circ$};
  \node (4) at (-2,0) {$\bullet$};
  \node (5) at (-1,0) {$\circ$};
  \node (6) at (0,0) {$\bullet$};
  \node (7) at (1,0) {$\circ$};
  \node (8) at (2,0) {$\bullet$};
  \node (9) at (3,0) {$\circ$};
  \node (10) at (4,0) {$\bullet$};
  \node (11) at (5,0) {$\circ$};
  \node (12) at (6,0) {$\bullet$};
  \node (13) at (7,0) {$\circ$};
  \node (14) at (8,0) {$\cdots$};

  \path[-] 
		 (0) edge node [below=2pt] {$ t_1$} (1)
		 (1) edge node [below=2pt] {$t_1 \ t_3$} (2)
		 (2) edge node [below=2pt] {$t_3 \ t_2$} (3)
		 (3) edge node [below=2pt] {$t_2 \ t_3$} (4)
		 (4) edge node [below=2pt] {$t_3 \ t_1$} (5)
		 (5) edge node [below=2pt] {$t_1 \ t_3$} (6)
		 (6)  edge node [below=2pt] {$t_3 \ t_1$} (7)
		 (7)  edge node [below=2pt] {$t_1 \ t_3$} (8)
		 (8)  edge node [below=2pt] {$t_3 \ t_2$} (9)
		 (9)  edge node [below=2pt] {$t_2 \ t_3$} (10)
		 (10)  edge node [below=2pt] {$t_3 \ t_1$} (11) 
		 (11)  edge node [below=2pt] {$t_1 \ t_3$} (12) 
		 (12)  edge node [below=2pt] {$t_3 \ t_2$} (13) 
		 (13)  edge node [below=2pt] {$t_2$} (14) 
 		 (1) edge [loop above] node [right=3pt] {$s_1$} (1)    
 		 (2) edge [loop above] node [right=3pt] {$s_3$} (2)  
 		 (3) edge [loop above] node [right=3pt] {$s_2$} (3)    
 		 (4) edge [loop above] node [right=3pt] {$s_3$} (4)  
 		 (5) edge [loop above] node [right=3pt] {$s_1$} (5)    
 		 (6) edge [loop above] node [right=3pt] {$s_3$} (6)  
 		 (7) edge [loop above] node [right=3pt] {$s_1$} (7)    
 		 (8) edge [loop above] node [right=3pt] {$s_3$} (8)  
		 (9) edge [loop above] node [right=3pt] {$s_2$} (9)
 		 (10) edge [loop above] node [right=3pt] {$s_3$} (10)  
 		 (11) edge [loop above] node [right=3pt] {$s_1$} (11)    
 		 (12) edge [loop above] node [right=3pt] {$s_3$} (12)  
 		 (13) edge [loop above] node [right=3pt] {$s_2$} (13) ;
\end{tikzpicture}
\end{tabular}
\end{center}

$$\G^A_0 :
\begin{tikzpicture}[every loop/.style={}]
  \tikzstyle{every node}=[inner sep=-1pt]
  \node (1) at (0,0) {$\circ$};
  \node (2) at (1,0) {$\bullet$};
  \path[-] 
	(1)  edge node [above=4pt] {$2t_1$ $2t_3$} (2);
  \path[-]
	(1) edge [loop left] node [above=6pt,right=3pt] {$s_1$} (1)
	(2) edge [loop right] node [above=6pt,left=3pt] {$s_3$} (2);
\end{tikzpicture}  \qquad
\G^B_0 :
\begin{tikzpicture}[every loop/.style={}]
  \tikzstyle{every node}=[inner sep=-1pt]
  \node (1) at (0,0) {$\circ$};
  \node (2) at (1,0) {$\bullet$};
  \path[-] 
	(1)  edge node [above=4pt] {$2t_2$ $2t_3$} (2);
  \path[-]
	(1) edge [loop left] node [above=6pt,right=3pt] {$s_2$} (1)
	(2) edge [loop right] node [above=6pt,left=3pt] {$s_3$} (2);
\end{tikzpicture}$$
$$\G^A_1 :
\begin{tikzpicture}[every loop/.style={}]
  \tikzstyle{every node}=[inner sep=-1pt]
  \node (1) at (0,0) {$\circ$};
  \node (2) at (1,0) {$\bullet$};
  \path[-] 
	(1)  edge node [above=4pt] {$2t_1$ $2t_3$} (2);
  \path[-]
	(1) edge [loop left] node [above=6pt,right=3pt] {$s_1$} (1)
	(2) edge [loop right] node [above=6pt,left=3pt] {$s_3$} (2);
\end{tikzpicture}  \qquad
\G^B_1 :
\begin{tikzpicture}[every loop/.style={}]
  \tikzstyle{every node}=[inner sep=-1pt]
  \node (0) at (-1,0) {$\circ$};
  \node (1) at (0,0) {$\bullet$};
  \node (2) at (1,0) {$\circ$};
  \path[-] 
	(0)  edge node [above=4pt] {$2t_1$ $t_3$} (1)
	(1)  edge node [above=4pt] {$t_3$ $2t_2$} (2);
  \path[-] 
	(0) edge [loop left] node [above=6pt,right=3pt] {$s_1$} (0)
	(1) edge [loop above] (1)
	(2) edge [loop right] node [above=6pt,left=3pt] {$s_2$} (2);
\end{tikzpicture}$$
$$\G^A_2 :
\begin{tikzpicture}[every loop/.style={}]
  \tikzstyle{every node}=[inner sep=-1pt]
  \node (0) at (0,0) {$\bullet$};
  \node (1) at (1,0) {$\circ$};
  \node (2) at (2,0) {$\bullet$};
  \node (3) at (3,0) {$\circ$};
  \path[-] 
	(0)  edge node [above=4pt] {$2t_3$ $t_1$} (1)
	(1)  edge node [above=4pt] {$t_1$ $t_3$} (2)
	(2)  edge node [above=4pt] {$t_3$ $2t_2$} (3);
  \path[-] 
	(0) edge [loop left] node [above=6pt,right=3pt] {$s_3$} (0)
	(1) edge [loop above] (1)
	(2) edge [loop above] (2)
	(3) edge [loop right] node [above=6pt,left=3pt] {$s_2$} (3);
\end{tikzpicture}  \qquad
\G^B_2 :
\begin{tikzpicture}[every loop/.style={}]
  \tikzstyle{every node}=[inner sep=-1pt]
  \node (0) at (-1,0) {$\circ$};
  \node (1) at (0,0) {$\bullet$};
  \node (2) at (1,0) {$\circ$};
  \path[-] 
	(0)  edge node [above=4pt] {$2t_1$ $t_3$} (1)
	(1)  edge node [above=4pt] {$t_3$ $2t_2$} (2);
  \path[-] 
	(0) edge [loop left] node [above=6pt,right=3pt] {$s_1$} (0)
	(1) edge [loop above] (1)
	(2) edge [loop right] node [above=6pt,left=3pt] {$s_2$} (2);
\end{tikzpicture}$$
$$\G^A_3 :
\begin{tikzpicture}[every loop/.style={}]
  \tikzstyle{every node}=[inner sep=-1pt]
  \node (0) at (0,0) {$\bullet$};
  \node (1) at (1,0) {$\circ$};
  \node (2) at (2,0) {$\bullet$};
  \node (3) at (3,0) {$\circ$};
  \path[-] 
	(0)  edge node [above=4pt] {$2t_3$ $t_1$} (1)
	(1)  edge node [above=4pt] {$t_1$ $t_3$} (2)
	(2)  edge node [above=4pt] {$t_3$ $2t_2$} (3);
  \path[-] 
	(0) edge [loop left] node [above=6pt,right=3pt] {$s_3$} (0)
	(1) edge [loop above] (1)
	(2) edge [loop above] (2)
	(3) edge [loop right] node [above=6pt,left=3pt] {$s_2$} (3);
\end{tikzpicture}  \qquad
\G^B_3 :
\begin{tikzpicture}[every loop/.style={}]
  \tikzstyle{every node}=[inner sep=-1pt]
  \node (0) at (-1,0) {$\circ$};
  \node (1) at (0,0) {$\bullet$};
  \node (2) at (1,0) {$\circ$};
  \path[-] 
	(0)  edge node [above=4pt] {$2t_1$ $t_3$} (1)
	(1)  edge node [above=4pt] {$t_3$ $2t_2$} (2);
  \path[-] 
	(0) edge [loop left] node [above=6pt,right=3pt] {$s_1$} (0)
	(1) edge [loop above] (1)
	(2) edge [loop right] node [above=6pt,left=3pt] {$s_2$} (2);
\end{tikzpicture}$$
$$\G^A_4 :
\begin{tikzpicture}[every loop/.style={}]
  \tikzstyle{every node}=[inner sep=-1pt]
  \node (0) at (0,0) {$\bullet$};
  \node (1) at (1,0) {$\circ$};
  \node (2) at (2,0) {$\bullet$};
  \node (3) at (3,0) {$\circ$};
  \path[-] 
	(0)  edge node [above=4pt] {$2t_3$ $t_1$} (1)
	(1)  edge node [above=4pt] {$t_1$ $t_3$} (2)
	(2)  edge node [above=4pt] {$t_3$ $2t_2$} (3);
  \path[-] 
	(0) edge [loop left] node [above=6pt,right=3pt] {$s_3$} (0)
	(1) edge [loop above] (1)
	(2) edge [loop above] (2)
	(3) edge [loop right] node [above=6pt,left=3pt] {$s_2$} (3);
\end{tikzpicture}  \qquad
\G^B_4 :
\begin{tikzpicture}[every loop/.style={}]
  \tikzstyle{every node}=[inner sep=-1pt]
  \node (0) at (-1,0) {$\circ$};
  \node (1) at (0,0) {$\bullet$};
  \node (2) at (1,0) {$\circ$};
  \node (3) at (2,0) {$\bullet$};
  \node (4) at (3,0) {$\circ$};
  \node (5) at (4,0) {$\bullet$};
  \path[-] 
	(0)  edge node [above=4pt] {$2t_1$ $t_3$} (1)
	(1)  edge node [above=4pt] {$t_3$ $t_2$} (2)
	(2)  edge node [above=4pt] {$t_2$ $t_3$} (3)
	(3)  edge node [above=4pt] {$t_3$ $t_1$} (4)
	(4)  edge node [above=4pt] {$t_1$ $2t_3$} (5);
  \path[-] 
	(0) edge [loop left] node [above=6pt,right=3pt] {$s_1$} (0)
	(1) edge [loop above] (1)
	(2) edge [loop above] (2)
	(3) edge [loop above] (3)
	(4) edge [loop above] (4)
	(5) edge [loop right] node [above=6pt,left=3pt] {$s_3$} (5);
\end{tikzpicture}$$
In this example, $K=0$, and moreover, we have  
$n_k = f_{k+2}-1$ where $f_k$ is the Fibonacci sequence defined by 
$f_k = f_{k-1} + f_{k-2}$, $f_1 = 1, f_2 = 1$,
i.e., $n_0=0, n_1 = 1, n_2 = 2,  n_3 =4$ and so on.
Note that
$\alpha_k = A$ if $k$ is even and $\alpha_k = B$ if $k$ is odd. 
For the sequence of $\mathbf{i}_k$ we have
$v_0 = (2t_1, 2t_2,2 t_3)$ and $v_{3k+i} = (t_{2+i})$ for each $i = -1,0,1$.
\end{example}




\section{Inverse process for acyclic Sturmian coloring}\label{section:4}
In this section, we characterize acyclic Sturmian colorings completely by showing that the converse of Theorem~\ref{thm:4} also holds : we define admissible sequences which determine sequences of edge-indexed graphs $\F^A_k, \F^B_k$ constructed by $(i)$-concatenations or $(i,j)$-concatenations recursively, so that the appropriate direct limit $\F$ is a linear graph canonically colored by $a,b$. We show that these colorings are Sturmian.

\subsection{Admissible sequences}
Let us first define the admissible sequences of indices which we will use in concatenations.
Set $\D =\{1, 2, \dots, d-1,d\}$, 
where $d$ is the degree of the tree. 
Let $\{ (\alpha_k, \mathbf{i}_k) \}_{k=0}^\infty$ be a sequence of pairs of $\alpha_k \in \{ A,B\}$ and $\mathbf{i}_k$ is one of $i_k \in \D$, $(i_k, j_k)\in \D^2$ or $(i_k, i'_k, j_k) \in \D^3.$
Put $K = \min\{ k \ge 0 : \alpha_k = A \}$ and $i_{-1} = 0$.
To define admissibility, we need an extra notation for the edge-indices at the end vertices: define the sequence $(\i^A_k, \i^B_k, \i^C_k)_{k \geq K}$ recursively by
\begin{align*}
\i^A_K = i'_{K}, \ \i^B_K = i_{K-1}, \ \i^C_K =  j_0 & &\text{ for }  i_K =0, \\
\i^A_K = i_{K} , \ \i^B_K = i'_{K}, \ \i^C_K =  j_0 & & \text{ for }  i_K >0
\end{align*}
and for $k>K$
$$\i^C_{k} = \i^{\overline\alpha_k}_{k-1}, \quad  \i^{\alpha_k}_k = i^{\alpha_k}_{k-1}, \quad \i^{\overline\alpha_k}_{k} = \i^C_{k-1}.$$

\begin{definition}[Admissible sequence]\label{def:4.1}
We call a sequence of indices $ \mathbf{i}_k$ an \emph{$\alpha_k$-admissible sequence} if it satisfies the following conditions.

\begin{enumerate}
\item If $0 \le k < K$, then $\mathbf{i}_k = (i_k,j_k) \in \D^2$ and satisfies
$$
1 \le i_k < d, \qquad 1 \le j_k \le d - i_{k-1}.
$$
\item If $k = K$, then $\mathbf{i}_K = (i_K, i'_K, j_K) \in \D^3$ and satisfies
$$1 \le i_K < i'_K \le d, \qquad 1 \le j_K \le d - i_{K-1},$$
or $\mathbf{i}_K = (i_K,j_K) \in \D^2$ and satisfies
$$
1 \le i_K \le d, \qquad 1 \le j_K \le d - i_{K-1}.
$$
Furthermore $\mathbf{i}_K \in \D^3$  if $K = 0$.
\item If $k > K$, then $\mathbf{i}_k =i_k$ or $(i_k, j_k)$ and satisfies
\begin{align*}
&1 \le i_k < \i^C_{k-1}, & &\text{if } \ \mathbf{i}_k = i_k, \\
&1 \le i_k \le d- \i^C_{k-1}, \quad 1 \le j_k \le d- \i^C_{k-1}, & &\text{if } \ \mathbf{i}_k = (i_k,j_k).
\end{align*}
\end{enumerate}

\end{definition}

\begin{definition}[Definition of edge-index graphs $\F^A_k, \F^B_k$] \label{def:F}
To an $\alpha_k$-admissible sequence $\mathbf{i}_k$, we associate the edge-indexed graphs $\F^A_k, \F^B_k$ defined as follows.
\begin{enumerate}
\item
For $k=-1$, $\F^A_{-1}, \F^B_{-1}$ are both the graph with one vertex and one loop of index $d$.
\begin{center}
\begin{tikzpicture}[every loop/.style={}]
  \tikzstyle{every node}=[inner sep=-1pt]
  \node (1) at (0,0) {$\circ$} node [below=15pt] at (0,0) {$\F^A_{-1}$} ;
  \path[-] (1) edge [loop left] node [above=8pt,right=2pt] {$d$} (1);
\end{tikzpicture} 
\qquad
\begin{tikzpicture}[every loop/.style={}]
  \tikzstyle{every node}=[inner sep=-1pt]
  \node (1) at (0,0) {$\bullet$} node [below=15pt] at (0,0) {$\F^B_{-1}$} ;
  \path[-] (1) edge [loop left] node [above=8pt,right=2pt] {$d$} (1);
\end{tikzpicture} 
\end{center}

\item For $k=0, \cdots, K-1$, define
$$\F^A_{k} = \F^A_{k-1}, \qquad  
\F^B_{k} = \F^A_{k-1} \overset{V,V'} {\underset{\mathbf{i}_{k}}  {\bowtie} } \F^B_{k-1}.$$
Here, $V$ is the unique vertex of $\F^A_{k-1}$. The vertex $V'$ is the unique vertex of $\F^B_{k-1}$ coming from $\F^A_{k-2}$ for $k \geq 1$ and the unique vertex of $\F^B_{-1}$ for $k=0$.

\begin{center}
\begin{tikzpicture}[every loop/.style={}]
  \tikzstyle{every node}=[inner sep=1pt]
  \node (1) at (0,0) {$\circ$} node [below=5pt,right=5pt] at (0,0) {$V$} ;
  \node (1) at (0,0) {} node [below=15pt] at (0,0) {$\F^A_{k}$} ;
  \path[-] (1) edge [loop left] node [above=8pt,right=2pt] {$d$} (1);
\end{tikzpicture} 
\qquad
\begin{tikzpicture}[every loop/.style={}]
  \tikzstyle{every node}=[inner sep=-1pt]
  \node (1) at (0,0) {$\circ$} node [below=5pt] at (0,0) {$V'$} ;
  \node (2) at (1,0) {$\circ$};
  \node (3) at (2,0) {\, $\cdots$} node [below = 15 pt] at (2,0) {$\F^B_{k}$};
  \node (4) at (3,0) {$\circ$};
  \node (5) at (4,0) {$\bullet$};
  \path[-] 
	(2) edge node [above=4pt] {$\ i_{k-1}$}  (3)
	(3) edge node [above=4pt] {$\  j_1$}  (4)
	(4) edge node [above=4pt] {$i_0 \  j_0$} (5)
	(1) edge node [above=4pt] {$i_k \   j_k$} (2);
 \path[-] 
        (2) edge [loop above] (2)
        (4) edge [loop above] (4)
        (5) edge [loop right] node [above=4pt] {\ $d-j_0$} (5)
        (1) edge [loop left] node [above=4pt] {$d-i_k$ \ } (1);
\end{tikzpicture} 
\end{center}

\item For $k=K$, define
\begin{align*}
\F^A_{K} &= \F^A_{K-1} \overset{V,V'}  {\underset{i_K,j_K}  {\bowtie} } \F^B_{K-1}, \qquad
\F^B_{K}  = \F^A_{K-1} \overset{V,V'}  {\underset{i'_K,j_K}  {\bowtie} } \F^B_{K-1},&   & \text{ if } \mathbf{i}_K = (i_K, i'_K, j_K) ,  \\
\F^A_{K} &= \F^A_{K-1} \overset{V,V'}  {\underset{i_K,j_K}  {\bowtie} } \F^B_{K-1}, \qquad
\F^B_{K} = \F^B_{K-1},&  & \text{ if } \mathbf{i}_K = (i_K,j_K), 
\end{align*}
where $V, V'$ are as in part (2). 
By the \emph{common end vertices of $\F^A_K, \F^B_K$}, we mean the images of the end vertex of $\F^B_{K-1}$ different from $V'$ in $\F^A_K$, $\F^B_K$, respectively.

\item For $k > K$, define
$$
\F^{\alpha_k}_{k} = \F^{A}_{k-1}  \overset{V,V'}{\underset{\mathbf{i}_k}  {\bowtie} }\F^{B}_{k-1}, \qquad \F^{\overline \alpha_k}_{k} = \F^{\overline \alpha_k}_{k-1} $$ 
where $V, V'$ are the common end vertices of $\F^{A}_{k-1}, \F^{B}_{k-1}$.
By the \emph{common end vertices of $\F^A_k$, $\F^B_k$}, we mean the images of the non-common end vertex of $\F^{\overline \alpha_k}_{k-1}$ in $\F^A_k, \F^B_k$, respectively.
\end{enumerate}
\end{definition}

The graphs $\F^A_k$, $\F^B_k$ have canonical colorings $\varphi^A_k$, $\varphi^B_k$, resp.: the color of each vertex $y$ is $a$ or $b$, according to whether $y$ comes from $\F^A_{-1}$ or $\F^B_{-1}$, respectively.

\begin{remark}
The indices in the definition above can be interpreted as follows.
For $k >K$, $d-\mathfrak{i}_k^C$ is the index of the loop at the common end vertex and $d-\mathfrak{i}_k^A, d-\mathfrak{i}_k^B$ are the indices of the loops at the non-common end vertices of $\F^A_{k}$, $\F^B_{k}$, respectively.
\end{remark}

\subsection{$\Psi$ is a direct limit}\label{sec4.2}
For a given sequence $\alpha_k$, a sequence $\beta_k \in \{ A, B \}$ is called \emph{$\alpha_k$-admissible} if  
\begin{equation}\label{eqn:4.2}
\beta_k = \alpha_k \ \textrm{or} \ \beta_{k-1}.
\end{equation} 
For an $\alpha_k$-admissible $\beta_k$,
we have a natural inclusions $(\F^{\beta_k}_k, \varphi^{\beta_k}_k) \hookrightarrow (\F^{\beta_{k+1}}_{k+1}, \varphi^{\beta_{k+1}}_{k+1})$ of colored graphs.
Consider the colored graph $(Y, \varphi)$ which is the direct limit determined by such inclusions:
$$ (Y, \varphi)=  \varinjlim \, (\F^{\beta_k}_k, \varphi^{\beta_k}_k).$$
The coloring $\varphi$ is the color of each vertex $y$ is $a$ or $b$, according to whether $y$ comes from $\F^A_{-1}$ or $\F^B_{-1}$, respectively. 
By construction, the universal cover of the edge-indexed graph $Y$ is a $d$-regular tree $T$, thus $(Y, \varphi)$ defines a coloring on the tree $T$, which is denoted by $(T, \varphi)$.
Similarly, the edge-indexed graph $\F^A_k, \F^B_k$ colored by $\varphi^A_k, \varphi^B_k$ have a $d$-regular colored tree as their universal cover, which we denote by $(T, \varphi^A_k), (T, \varphi^B_k)$, respectively. 

Let 
$n_k = |V\F^{\alpha_{k}}_{k}|-2$. 
Fix a vertex $t \in V \F^A_k (t \in V \F^B_k)$. For $n \le n_k,$  denote by $\N_n(t)$ the class of $n$-balls with the center a lift of $t$ in $(T, \varphi^A_k)$ ($(T, \varphi^B_k)$, respectively).

\begin{lemma}\label{lem4.4}
\begin{enumerate} 
\item For any vertices $v, w$ in $\F^A_{k-1} \bowtie \F^B_{k-1}$, we have $[\N_{n_{k}}(v)] \ne [\N_{n_{k}}(w)]$.

\item Let $v$ be a vertex of $\F^A_{k-1} \bowtie \F^B_{k-1}$ corresponding to a vertex $v'$ of $\F^A_{k-1}$ or $\F^B_{k-1}$. Then $[\N_{n_{k}}(v)]=[\N_{n_{k}}(v')]$.
\end{enumerate}
\end{lemma}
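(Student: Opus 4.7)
The plan is to prove both parts simultaneously by induction on $k$, with part (2) providing the key structural input for part (1). The base case ($k$ small, where $\F^A_{k-1}, \F^B_{k-1}$ have one or two vertices) is immediate from Definition~\ref{def:F} since the colored balls of small radius distinguish the few vertices present.

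For the inductive step of part (2), I would use the natural inclusion morphism $\iota: \F^A_{k-1} \hookrightarrow \F^A_{k-1} \bowtie \F^B_{k-1}$ (the $\F^B$-case being symmetric) as a morphism of colored edge-indexed graphs. The crucial point is that the concatenation modifies the edge-index structure only at the joining vertex $V$: away from $V$, the inclusion is a local isomorphism. The admissibility conditions in Definition~\ref{def:4.1} are designed precisely so that the total degree at $V$ in the concatenation remains $d$, matching the regularity of the covering tree. Consequently, for $v' \in V\F^A_{k-1}$ with image $v$, a walk of length $\le n_k$ from a lift of $v$ in $(T, \varphi^{\alpha_k}_k)$ descends to a walk from $v$ in the quotient; the corresponding walk from $v'$ in $(T, \varphi^A_{k-1})$ has the same colored signature because the indices read off at each visited vertex agree, once one accounts for the loop-versus-branch recombination at lifts of $V$ using the admissibility of $\mathbf{i}_k$. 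This yields $[\N_{n_k}(v)] = [\N_{n_k}(v')]$ as colored ball classes.

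For part (1), I would argue in cases according to the positions of $v,w$ in the concatenation. If both lie on the $\F^A_{k-1}$-side (or both on the $\F^B_{k-1}$-side), then by part (2) their $n_k$-balls equal the $n_k$-balls of their preimages $v', w'$ in the corresponding smaller graph; these are distinct by applying the inductive hypothesis combined with the fact that increasing the radius from $n_{k-1}$ to $n_k$ only refines the distinction between classes. If $v$ and $w$ lie on opposite sides, the $n_k$-ball around $v$ has a unique path to the joining vertex $V$ carrying specific indices inherited from $\F^A_{k-1}$, whereas the analogous path from $w$ carries indices from $\F^B_{k-1}$; admissibility ensures these signatures differ, so the colored balls cannot be isomorphic.

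The main obstacle is the verification in part (2) at and near the joining vertex, where the local edge-indexed structure genuinely changes under concatenation: one must check that the colored ball class is preserved by exhibiting a color-preserving isomorphism between the relevant subtrees in $(T, \varphi^A_{k-1})$ and $(T, \varphi^{\alpha_k}_k)$, which requires matching the combinatorics of the loop branches in the smaller tree to the combination of loop and cross-edge branches appearing in the concatenation. This is where the definitions of $\i^A_k, \i^B_k, \i^C_k$ in Definition~\ref{def:4.1} and the explicit distribution of indices in the two concatenation types of Definition~\ref{def:sum} are essential.
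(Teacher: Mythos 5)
Your overall strategy (simultaneous induction on $k$, with part (2) proved by checking that the edge-indices at the joining vertex recombine so that the total degree is preserved) matches the paper's proof, and your treatment of part (2) and of the same-side case of part (1) is essentially correct. However, there is a genuine gap in the cross-side case of part (1). You claim that for $v$ on the $\F^A_{k-1}$-side and $w$ on the $\F^B_{k-1}$-side, ``the path to the joining vertex carries specific indices inherited from $\F^A_{k-1}$'' versus ``indices from $\F^B_{k-1}$,'' and that ``admissibility ensures these signatures differ.'' This is exactly backwards for the hard subcase: writing $v'_1,\dots,v'_n$ and $w'_1,\dots,w'_m$ for the vertices of $\F^A_{k-1}$ and $\F^B_{k-1}$ ordered from the common end vertices, the inductive hypothesis for part (1) and (2) gives $[\N_{n_{k-1}}(v'_i)]=[\N_{n_{k-1}}(w'_i)]$ for all $1\le i\le m$ --- the corresponding vertices on the two sides have \emph{identical} colored balls up to radius $n_{k-1}$, so no signature of length $\le n_{k-1}$ read toward the joining vertex can distinguish them, and admissibility does not make the indices differ. (For non-corresponding pairs $v'_i, w'_j$ with $i\ne j$ the distinctness already holds at radius $n_{k-1}$ and persists, so those are not the issue.)

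The missing idea is the containment argument the paper uses: since $d(v'_i,v'_{m+1})\le m=n_k-n_{k-1}$ (respectively $\le m-1$ for an $(i)$-concatenation), the ball $\N_{n_k}(v'_i)$ contains a ball of class $[\N_{n_{k-1}}(v'_{m+1})]$, whereas $\N_{n_k}(w'_i)$ cannot, because every $n_{k-1}$-ball occurring in $(T,\varphi^B_{k-1})$ is of one of the classes $[\N_{n_{k-1}}(w'_1)],\dots,[\N_{n_{k-1}}(w'_m)]$, all distinct from $[\N_{n_{k-1}}(v'_{m+1})]$ by induction. Hence $[\N_{n_k}(v'_i)]\ne[\N_{n_k}(w'_i)]$. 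You also omit the boundary case $n=m$ (which occurs when $k=K+1$ and $\mathbf{i}_K=(i_K,i'_K,j_K)$), where there is no vertex $v'_{m+1}$ and one must instead use $i_K<i'_K$ to see that the end vertices $v'_n$ and $w'_n$ have different numbers of neighbors of a given class. Without these two points the cross-side case of part (1) does not go through.
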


\noindent \textit{Proof.}
Denote by $v'_1, \cdots, v'_n$ and $w'_1, \cdots, w'_m$ the vertices of $\F^{A}_{k-1}$ and $\F^{B}_{k-1}$ in the linear order so that $v'_1, w'_1$ are the common end vertices with index $i^C_{k-1}$ and $v'_n, w'_m$ are the end vertices with index $\i^{A}_{k-1}, \i^{B}_{k-1}$.

If $\F^{\cdot}_k$ is $(i,j)$-concatenation, then
there are $(n+m)$ vertices in $\F^{\cdot}_k$ denoted by $v_1, \cdots, v_n, w_1, \cdots, w_m$  corresponding to vertices $v'_1, \cdots, v'_n$ of $\F^{A}_{k-1}$ and $w'_1, \cdots, w'_m$ of $\F^{B}_{k-1}$, respectively. 
If $\F^{\cdot}_k$ is $(i)$-concatenation, then 
there are $(n+m-1)$ vertices in $\F^{\cdot}_k$ denoted by $v_0, v_2, \cdots, v_n, w_2, \cdots, w_m$ where $v_0$ is a vertex of $\F^{\cdot}_{k}$ corresponding to two vertices $v'_1$ of $\F^A_{k-1}$ and $w'_1$ of $\F^B_{k-1}$.  

We use induction on $k$ on part (1) and part (2) at the same time.
\noindent
\begin{itemize}
\item[(a)] For $k \le K-1$, recall that $\alpha_k = B$, $n_k = k$, $n =1$, $m= k+1$ and the figures of $\F^A_k$ and $\F^B_k$ are as follows:
\begin{center}
\begin{tikzpicture}[every loop/.style={}]
  \tikzstyle{every node}=[inner sep=-1pt]
  \node (1) at (0,0) {$\circ$} node [below=5pt,right=5pt] at (0,0) {$v'_1$} ;
  \node (1) at (0,0) {} node [below=15pt] at (0,0) {$\F^A_{k} = \F^A_{k-1}$} ;
  \path[-] (1) edge [loop left] node [above=8pt,right=2pt] {$d$} (1);
\end{tikzpicture} 
\qquad
\begin{tikzpicture}[every loop/.style={}]
  \tikzstyle{every node}=[inner sep=-1pt]
  \node (1) at (0,0) {$\circ$} node [below=5pt] at (0,0) {$v_{1}$} ;
  \node (2) at (1,0) {$\circ$} node [below=5pt] at (1,0) {$w_{1}$} ;
  \node (3) at (2,0) {\, $\cdots$} node [below = 15 pt] at (2,0) {$\F^B_{k}$};
  \node (4) at (3,0) {$\circ$} node [below=5pt] at (3,0) {$w_{k}$} ;
  \node (5) at (4,0) {$\bullet$} node [below=5pt] at (4,0) {$w_{k+1}$} ;
  \path[-] 
	(2) edge node [above=4pt] {$\ i_{k-1}$}  (3)
	(3) edge node [above=4pt] {$\  j_1$}  (4)
	(4) edge node [above=4pt] {$i_0 \  j_0$} (5)
	(1) edge node [above=4pt] {$i_k \   j_k$} (2);
 \path[-] 
        (2) edge [loop above] (2)
        (4) edge [loop above] (4)
        (5) edge [loop right] node [above=4pt] {\ $d-j_0$} (5)
        (1) edge [loop left] node [above=4pt] {$d-i_k$ \ } (1);
\end{tikzpicture} 
\qquad
\begin{tikzpicture}[every loop/.style={}]
  \tikzstyle{every node}=[inner sep=-1pt]
  \node (2) at (1,0) {$\circ$} node [below=5pt] at (1,0) {$w'_{1}$} ;
  \node (3) at (2,0) {\, $\cdots$} node [below = 15pt] at (2,0) {$\F^B_{k-1}$};
  \node (4) at (3,0) {$\circ$} node [below=5pt] at (3,0) {$w'_{k}$} ;
  \node (5) at (4,0) {$\bullet$} node [below=5pt] at (4,0) {$w'_{k+1}$} ;
  \path[-] 
	(2) edge node [above=4pt] {$\ \ i_{k-1}$}  (3)
	(3) edge node [above=4pt] {$\  j_1$}  (4)
	(4) edge node [above=4pt] {$i_0 \  j_0$} (5);
 \path[-] 
        (4) edge [loop above] (4)
        (5) edge [loop right] node [above=4pt] {\ $d-j_0$} (5)
        (2) edge [loop left] node [above=4pt] {$d-i_{k-1}$\ \qquad } (2);
\end{tikzpicture} 
\end{center}
Obviously, there is exactly one class of $n_k$-balls in $(T, \phi^{\overline{\alpha_k}}_k)$, namely the ball colored by $a$ (colored white in the figure)
only. There are $n_k+2=k+2$ distinct classes of $n_k$-balls in $(T, \phi^{\alpha_k}_k)$, namely 
the balls  $[\N_{n_k}(v_1)]$ and $[\N_{n_k}(w_i)]$ for $i=1, \cdots, k+1$. They are distinct classes since the vertex in $\N_{n_k}(w_i)$ closest to the center $v_i$ and colored by $b$ (colored black in the figure) is of distance $n_k+2-i$ for $i=1, \cdots, k+1$ from the center.
For part (2), we have $[\N_{n_k}(v_1)]$ = $[\N_{n_k}(v'_1)]$ since all the vertices are colored by $a$ in both balls. 
Let $w'_i$ be the vertices of $\F^B_{k-1}$. Then for each $1 \le i \le n_k+1$ we have $[\N_{n_k}(w_i)]$ = $[\N_{n_k}(w'_{i})]$ since the number of neighboring $n_k -1$ colored balls are the same by the induction argument.

\item[(b)] If $k = K$ and $\mathbf{i}_K = (i_K, j_K)$, then $\F^A_k = \F^A_{k-1} \underset{i_K,j_K}{\bowtie} \F^B_{k-1}$.
An argument similar to (1) shows that $[\N_{n_{K}}(w_i)]=[\N_{n_{K}}(w'_i)]$ for $1 \le i \le m + K+1$ and $[\N_{n_{K}}(v_1)]=[\N_{n_{K}}(v'_1)]$.

\item[(c)] If $k = K$ and $\mathbf{i}_K = (i_K, i'_K, j_K)$, then 
$\F^A_k = \F^A_{k-1} \underset{i_K,j_K}{\bowtie} \F^B_{k-1}$, $\F^B_k = \F^A_{k-1}\underset{i'_K,j_K}{\bowtie} \F^B_{k-1}$.
Let us denote by $v^A_1$, $w^A_i$ and $v^B_1$, $w^B_i$ the vertices of $\F^A_K$ and $\F^B_K$ respectively.
Then $[\N_{n_{K}}(w^A_i)]=[\N_{n_{K}}(w^B_i)]=[\N_{n_{K}}(w'_i)]$ for all $1 \le i \le m = K+1$ and $[\N_{n_{K}}(v^A_1)]=[\N_{n_{K}}(v^B_1)]=[\N_{n_{K}}(v'_1)]$.



\item[(d)] Now let us prove the case $k \ge K+1$ assuming induction hypothesis for up to $k-1$. 
Suppose $\alpha_{k} = A$. 
\begin{center}
\begin{tikzpicture}[every loop/.style={}]
  \tikzstyle{every node}=[inner sep=-.5pt]
  \node (1) at (0,0) {$\bullet$} node [below=5pt] at (0,0) {$v'_1$} ;
  \node (2) at (1,0) {$\bullet$} node [below=5pt] at (1,0) {$v'_2$} ;
  \node (3) at (2,0) {\, $\cdots$} node [below = 18 pt] at (2,0) {$\F^A_{k-1}$};
  \node (4) at (3,0) {$\bullet$} node [below=5pt] at (3,0) {$v'_{n}$} ;
  \path[-] 
	(1) edge node [above=4pt] {}  (2)
	(2) edge node [above=4pt] {}  (3)
	(3) edge node [above=4pt] {}  (4);
 \path[-] 
        (1) edge [loop above] (1)
        (2) edge [loop above] (2)
        (4) edge [loop above] (4);
\end{tikzpicture} 
\qquad
\begin{tikzpicture}[every loop/.style={}]
  \tikzstyle{every node}=[inner sep=-.5pt]
  \node (0) at (0,0) {$\bullet$} node [below=5pt] at (0,0) {$v_n$};
  \node (1) at (1,0) {\, $\cdots$} node [below=5pt] at (1,0) {} ;
  \node (2) at (2,0) {$\bullet$} node [below=5pt] at (2,0) {$v_2$};
  \node (3) at (3,0) {$\bullet$} node [below=5pt] at (3,0) {$v_1$};
  \node (4) at (4,0) {$\bullet$} node [below=5pt] at (4,0) {$w_1$};
  \node [below=18pt] at (2.5,0) {$\F^A_{k} = \F^A_{k-1} \bowtie \F^B_{k-1}$};
  \node (5) at (5,0) {\, $\cdots$} node [below=5pt] at (5,0) {} ;
  \node (6) at (6,0) {$\bullet$} node [below=5pt] at (6,0) {$w_m$} ;
  \path[-] 
	(0) edge node [above=4pt] {} (1)
	(1) edge node [above=4pt] {} (2)
	(2) edge node [above=4pt] {} (3)
	(3) edge node [above=4pt] {} (4)
	(4) edge node [above=4pt] {} (5)
	(5) edge node [above=4pt] {} (6);
 \path[-] 
        (0) edge [loop above] (0)
        (2) edge [loop above] (2)
        (3) edge [loop above] (3)
        (4) edge [loop above] (4)
        (6) edge [loop above] (6);
\end{tikzpicture} 
\qquad
\begin{tikzpicture}[every loop/.style={}]
  \tikzstyle{every node}=[inner sep=-.5pt]
  \node (1) at (0,0) {$\bullet$} node [below=5pt] at (0,0) {$w'_1$} ;
  \node (3) at (1,0) {\, $\cdots$} node [below = 18 pt] at (1,0) {$\F^B_{k} = \F^B_{k-1}$};
  \node (4) at (2,0) {$\bullet$} node [below=5pt] at (2,0) {$w'_{m}$} ;
  \path[-] 
	(1) edge node [above=4pt] {}  (3)
	(3) edge node [above=4pt] {}  (4);
 \path[-] 
        (1) edge [loop above] (1)
        (4) edge [loop above] (4);
\end{tikzpicture} 
\end{center}
If $\alpha_{k-1} = A$, then $\F^A_{k-1} = \F^A_{k-2} \bowtie \F^B_{k-2}$ and $n \ge m$. 
Moreover the vertices $v'_1, \dots v'_m$ corresponds to $w'_1, \dots, w'_m$ by the definition of the common end vertices. 
By the induction argument, 
$[\N_{n_{k-1}}(v'_i)]=[\N_{n_{k-1}}(w'_i)]$ for all $1 \le i \le m$. 
Since $[\N_{n_{k-1}}(v'_i)] \ne [\N_{n_{k-1}}(v'_j)]$ for $i \ne j$ and $[\N_{n_{k-1}}(w'_i)] \ne [\N_{n_{k-1}}(w'_j)]$ for $i \ne j$, to show part (1) it remains to show that 
$[\N_{n_{k}}(v'_i)] \neq [\N_{n_{k}}(w'_i)]$ for all $1 \le i \le m$ for $(i,j)$-concatenation and for all $2 \le i \le m$ for $(i)$-concatenation.

Note that $d(v'_i, v'_{m+1}) \leq m = n_{k}-n_{k-1}$ for $1 \leq i \leq m$ in the case of $(i,j)$-concatenation. 
Similarly, $d(v'_i, v'_{m+1}) \leq m -1 = n_{k}-n_{k-1}$, and for $2 \leq i \leq m$ in the case of $(i)$-concatenation.
It follows that
$B_{n_{k}}(v'_i)$ contains $B_{n_{k-1}}(v'_{m+1})$. 
In contrast, $B_{n_{k}}(w'_i)$ does not contain $B_{n_{k-1}}(v'_{m+1})$ since all $n_{k-1}$-balls in $\F^B_{k-1}$ are of the class $[B_{n_{k-1}}(w'_i)] = [B_{n_{k-1}}(v'_i)]$, $1 \le i \le m$.
Therefore, $[\N_{n_{k}}(w'_i)] \neq [\N_{n_{k}}(v'_i)]$ in $(T, \phi^A_{k})$, for $i \geq 1$ for $(i,j)$-concatenation and for $i \geq 2$ in the case of $(i)$-concatenation. 
Thus part (1) for $k$ follows.

If $n=m$, then $k = K+1$ and $n = m = K+2$.
In this case, $\mathbf{i}_K = (i_K, i'_K, j_K)$, i.e.,
$\F^A_K = \F^A_{K-1} \underset{i_K,j_K}{\bowtie} \F^B_{K-1}$, $\F^B_K = \F^A_{K-1}\underset{i'_K,j_K}{\bowtie} \F^B_{K-1}$.
Note that $[\N_K (v'_i)] = [\N_K (w'_i)]$ for all $1 \le i \le n=m$.
Since $i_K < i'_K$, the number $[\N_K (v'_{n})]$ adjacent to $v'_{n}$ and $w'_n$ are different. Thus $[\N_{K+1} (w'_{n})] \ne [\N_{K+1} (v'_{n})]$.
By the similar argument, we have $[\N_{n_{K+1}} (w'_{i})] \ne [\N_{n_{K+1}} (v'_{i})]$ for each $1 \le i \le n$.

For part (2), we consider $\N_{n_{k+1}}(v_i)$ as a colored $m = (n_{k+1}-n_k)$-ball by coloring of $\N_{n_k} (t)$ on each vertex. 
Then $v_i$ and $w_i$ are colored by the same color $[\N_{n_k} (v_i)] = [\N_{n_k} (w_i)]$.

Note that for $1 < i \leq m_k -1$, since they both come from the same $\F_{k-1}$, we have
$$i(w_i, w_j) = i(v_i, v_j)$$ for $j=i, i-1, i+1.$ For $i=1$, 
$$i(w_1, w_1) + i(w_1, v_1) =i(v_1, w_1) + i(v_1, v_1).$$
In other words, the edge-indexed graph colored by $n_k$-balls in $\F^A_{k+1}$ with vertices of distance $m_k$ from the vertex $w_i$ is isomorphic to the edge-indexed graph colored by $n_k$-balls in $\F^B_{k+1}$ with vertices of distance $m_k$ from the vertex $w'_i$.

Therefore $[\N_m (v_i)] = [\N_m (v'_i)]$ and $[\N_m (w_i)] = [\N_m (w'_i)]$. 
The proof for the case $\alpha_k = B$ is similar.

\end{itemize}

\qed

\begin{proposition}
Two admissible sequences $\beta_k, \beta'_k$ are eventually equal if and only if they have the same direct limit
$\varinjlim \, (\F^{\beta_k}_k, \varphi^{\beta_k}_k) =  \varinjlim \, (\F^{\beta'_k}_k, \varphi^{\beta'_k}_k)$.
\end{proposition}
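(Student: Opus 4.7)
The $(\Leftarrow)$ direction will be immediate: when $\beta_k = \beta'_k$ for all $k \ge N$, the two directed systems coincide termwise with identical connecting inclusions for $k \ge N$, and since a direct limit depends only on a cofinal subsystem, the two colimits agree.

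For $(\Rightarrow)$, the plan is to assume the common direct limit $(Y, \varphi) := \varinjlim (\F^{\beta_k}_k, \varphi^{\beta_k}_k) = \varinjlim (\F^{\beta'_k}_k, \varphi^{\beta'_k}_k)$ and to show that the finite subgraph $F_k := \F^{\beta_k}_k \hookrightarrow Y$ is intrinsically recoverable, whence $\beta_k$ may be read off. I will first verify by induction using Definition~\ref{def:F} that $|V\F^A_k| \ne |V\F^B_k|$ for every $k \ge 0$, since the growth rules at each step strictly enlarge one side of the pair relative to the other (the admissibility condition $i_K < i'_K$ handles the exceptional step $k = K$ when both sides grow). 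I will then invoke Lemma~\ref{lem4.4} to conclude that each vertex of $F_k$ is characterized by its colored $n_k$-ball class in the universal cover $(T, \varphi)$ of $Y$, so $F_k$ occupies a uniquely determined subgraph of $Y$. Combining these, for each $k$ the isomorphism type of $F_k$ as an edge-indexed graph distinguishes $\beta_k = A$ from $\beta_k = B$, and $F_k \subseteq Y$ is the unique such subgraph that extends the previously identified $F_{k-1}$ at its common end vertex.

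Comparing the two admissible sequences inside $Y$, both $(F_k)_k$ and $(F'_k := \F^{\beta'_k}_k)_k$ are monotone filtrations exhausting $Y$, with each filtrant belonging to one of two prescribed isomorphism types. A disagreement $\beta_k \ne \beta'_k$ then produces a shape mismatch between $F_k$ and $F'_k$ that, by the recursive concatenation of Definition~\ref{def:F}, propagates into a concrete vertex whose inclusion times into the two filtrations differ, contradicting $Y = Y'$. Hence $\beta_k = \beta'_k$ for all sufficiently large $k$. The hardest step will be making this propagation rigorous; the cleanest route is to invoke the induction algorithm of Theorem~\ref{thm:main} applied to $(Y, \varphi)$, since by Theorem~\ref{thm:6} the algorithm recovers the equivalence class $[\beta_k]$ intrinsically from $Y$, which immediately gives $[\beta_k] = [\beta'_k]$.
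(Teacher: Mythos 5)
Your easy direction is fine (though note it is the ``$\Rightarrow$'' of the statement as written, not ``$\Leftarrow$''), and you have correctly identified Lemma~\ref{lem4.4}(1) as the engine of the converse. But the converse as you present it has a genuine gap. The step you yourself flag as hardest --- that a disagreement $\beta_k\neq\beta_k'$ ``propagates into a concrete vertex whose inclusion times into the two filtrations differ, contradicting $Y=Y'$'' --- is not a contradiction as stated: two distinct filtrations of the same graph can perfectly well include a given vertex at different times. And the fallback you offer is circular: Theorem~\ref{thm:6} asserts $\Psi\circ\Phi=\mathrm{Id}$ (the algorithm applied to a given Sturmian coloring reproduces that coloring), not that $[\beta_k]$ is recoverable from an arbitrary direct limit $\Psi(\beta_k)$; the latter is essentially $\Phi\circ\Psi=\mathrm{Id}$, which is what this Proposition is needed to establish, and Theorem~\ref{thm:6} in any case appears later in the paper. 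A smaller inaccuracy: the claim $|V\F^A_k|\neq|V\F^B_k|$ fails at $k=K$ when $\mathbf{i}_K=(i_K,i_K',j_K)$, since then both graphs are $(i,j)$-concatenations of $\F^A_{K-1}$ and $\F^B_{K-1}$ with the same vertex set; they differ only in one edge index.

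The paper's argument avoids all of this by working with ball classes of individual vertices rather than with recoverability of the filtration. Assuming $\beta_k\neq\beta_k'$ for infinitely many $k$, take any vertices $t$, $t'$ of the two limits; for large $k$ with $\beta_k\neq\beta_k'$ they lie in $\F^{\beta_k}_k$ and $\F^{\beta_k'}_k$ respectively, hence are distinct vertices of $\F^{\alpha_{k+1}}_{k+1}=\F^A_k\bowtie\F^B_k$ unless both are the common end vertices, so Lemma~\ref{lem4.4}(1) gives $[\N_{n_{k+1}}(t)]\neq[\N_{n_{k+1}}(t')]$; in the exceptional case $t$, $t'$ are the joining vertices of the concatenation, hence cease to be end vertices from stage $k+1$ on, and one reruns the argument at a later $k'$ where the sequences again disagree. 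Since every pair $(t,t')$ is separated by some ball class, no color-preserving identification of the two limits exists. If you want to keep your ``unique filtration'' viewpoint, you would have to prove precisely this separation statement anyway, so you may as well argue as the paper does.
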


\begin{proof}
It is clear that if $\beta_k, \beta'_k$ are eventual equal, the direct limits are the same.
Suppose that $\beta_k \ne \beta'_k$ for infinitely many $k$.

Let $t, t'$ be the vertices of $\varinjlim \, \F^{\beta_k}_k,  \varinjlim \, \F^{\beta'_k}_k$.
Then there exist $k_1, k_2$ such that $t \in V\F^{\beta_k}_k$ for $k \ge k_1$ and $t' \in V\F^{\beta'_k}_k$ for $k \ge k_2$.
Choose $k \ge \max( k_1, k_2)$ as $\beta_k \ne \beta'_k$.
Then $t \in V\F^{\beta_{k}}_{k}$, $t' \in V\F^{\beta'_{k}}_{k}$, thus $t$, $t'$ are different vertices in $V\F^{\alpha_{k+1}}_{k+1}$ unless they are the common ends of $\F^{\beta_{k}}_{k}$, $\F^{\beta'_{k}}_{k}$.
By Lemma~\ref{lem4.4} (1),  we have $[\N_{n_{k+1}} (t)] \ne [\N_{n_{k+1}} (t')]$.

If $t$, $t'$ are the common end vertices of $\F^{\beta_{k}}_{k}$, $\F^{\beta'_{k}}_{k}$, then they are the joining vertex of the concatenation.
Therefore, one of $t$, $t'$ is not end vertex of $\F^{\beta_{k+\ell}}_{k+\ell}$ or $\F^{\beta'_{k+\ell}}_{k+\ell}$ for all $\ell \ge 1$.
Choose another $k' > k$ such that $\beta_{k'} \ne \beta'_{k'}$ and apply the same argument.
\end{proof}

\begin{lemma}\label{lem4.6}
For any vertex $v$ in $\F^A_{k + \ell}$ or $\F^B_{k + \ell}$ with $\ell \ge 0$, 
there exists $v' \in V\F^{\alpha_k}_k$ such that $[ \N_{n_k} (v)] = [ \N_{n_k} (v')] $.
\end{lemma}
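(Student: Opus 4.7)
The plan is to prove the lemma by induction on $\ell \ge 0$, with Lemma~\ref{lem4.4}(2) as the engine driving the inductive step.

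For the base case $\ell = 0$, the statement is immediate when $v \in V\F^{\alpha_k}_k$ (take $v' = v$), so the only work lies in the case $v \in V\F^{\overline{\alpha_k}}_k$. In the generic regime $k > K$, Definition~\ref{def:F} gives $\F^{\alpha_k}_k = \F^A_{k-1} \bowtie \F^B_{k-1}$ while $\F^{\overline{\alpha_k}}_k = \F^{\overline{\alpha_k}}_{k-1}$, so $v$ is automatically a vertex of one of the two factors of the concatenation; Lemma~\ref{lem4.4}(2) then produces a corresponding vertex $v' \in V\F^{\alpha_k}_k$ with $[\N_{n_k}(v)] = [\N_{n_k}(v')]$. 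The boundary cases $0 \le k \le K$ I would handle in the same spirit by unwinding the tailored formulas in Definition~\ref{def:F}(2)--(3) one by one.

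For the inductive step, I assume the claim for $\ell - 1 \ge 0$ and take $v \in V\F^A_{k+\ell} \cup V\F^B_{k+\ell}$. If $v \in V\F^{\overline{\alpha_{k+\ell}}}_{k+\ell}$, then since $\F^{\overline{\alpha_{k+\ell}}}_{k+\ell} = \F^{\overline{\alpha_{k+\ell}}}_{k+\ell-1}$ by construction, the vertex $v$ already lies in $V\F^A_{k+\ell-1} \cup V\F^B_{k+\ell-1}$ and the induction hypothesis (with $\ell$ replaced by $\ell-1$) applies directly. Otherwise $v \in V\F^{\alpha_{k+\ell}}_{k+\ell} = V(\F^A_{k+\ell-1} \bowtie \F^B_{k+\ell-1})$, and Lemma~\ref{lem4.4}(2) applied at index $k+\ell$ furnishes $v'' \in V\F^A_{k+\ell-1} \cup V\F^B_{k+\ell-1}$ with $[\N_{n_{k+\ell}}(v)] = [\N_{n_{k+\ell}}(v'')]$. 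Restricting this equivalence of $n_{k+\ell}$-balls to their concentric $n_k$-balls (using $n_k \le n_{k+\ell}$) yields $[\N_{n_k}(v)] = [\N_{n_k}(v'')]$, and the induction hypothesis applied to $v''$ closes the step.

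The hard part will be the base case in the transitional regimes $k \le K$, where $\F^A_k$ and $\F^B_k$ do not yet obey the uniform formula $\F^{\alpha_k}_k = \F^A_{k-1} \bowtie \F^B_{k-1}$. In particular, when $\mathbf{i}_K = (i_K, i'_K, j_K)$ so that $\F^A_K$ and $\F^B_K$ have identical underlying vertex sets but different edge-indices at the joining vertex, some care is needed to verify that every $n_K$-ball class realized in $\F^B_K$ is also realized by some vertex of $\F^A_K$; this reduces to inspecting the local structure at the joining vertex via Definition~\ref{def:F}. Once the base case is settled, the rest of the proof is a routine bookkeeping application of Lemma~\ref{lem4.4}(2).
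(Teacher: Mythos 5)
Your proof is correct and follows essentially the same route as the paper: descend from level $k+\ell$ to level $k$ one step at a time, using the stability $\F^{\overline{\alpha_j}}_{j}=\F^{\overline{\alpha_j}}_{j-1}$ when the vertex lies in the unchanged factor and Lemma~\ref{lem4.4}(2) plus restriction of ball classes when it lies in the freshly concatenated graph (the paper merely packages the run of unchanged levels into a single jump to the largest $m$ with $\alpha_m=A$). The only bookkeeping point you should make explicit is that at $k+\ell=K$ with $\mathbf{i}_K\in\D^3$ both $\F^A_K$ and $\F^B_K$ are concatenations, so the "unchanged factor" branch is unavailable there and one must use the Lemma~\ref{lem4.4}(2) branch for either choice -- which you have already flagged and which causes no difficulty.
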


\begin{proof}
We may assume that $v$ is a vertex of $\F^A_{k + \ell}$.
If $\alpha_{k+i} = B$ for all $0 \le i \le \ell$, then $\F^A_{k + \ell} = \F^A_{k-1}$, thus $v$ is also a vertex of $\F^A_{k-1}$.
By Lemma~\ref{lem4.4} (2) there exists $v' \in V\F^{\alpha_k}_k$ such that $[ \N_{n_k} (v)] = [ \N_{n_k} (v')] $.

Let $m$ be the largest integer with $k \le m \le k+ \ell$ satisfying $\alpha_{m} = A$.
Then $\F^A_{k + \ell} = \F^A_{m}$,  thus $v$ is also a vertex of $\F^A_{m}$.
By Lemma~\ref{lem4.4} (2), there exists $v_{m-1} \in V\F^{\alpha_{m-1}}_{m-1}$ such that $[ \N_{n_m} (v)] = [ \N_{n_m} (v_{m-1})] $.
Inductively, we can choose $v_{i} \in V\F^{\alpha_{i}}_{i}$ such that $[ \N_{n_{i+1}} (v_{i+1})] = [ \N_{n_{i+1}} (v_{i})]$ for $k \le i \le m-1$.
Therefore, $[ \N_{n_{k+1}} (v_{k})] = [ \N_{n_{k+1}} (v_{k+1})] = \dots = [ \N_{n_{k+1}} (v_{m-1})] = [ \N_{n_{k+1}} (v)]$.
\end{proof}

\begin{proposition}\label{Prop4.7} 
Assume that $\beta_k = \alpha_k$ for infinitely many $k$.
Then the direct limit $(Y, \varphi)$ is a Sturmian coloring.
\end{proposition}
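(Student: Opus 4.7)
The plan is to show $b_n(\varphi) = n+2$ for every $n \ge 0$, which is precisely the definition of a Sturmian coloring. The strategy is first to pin down the factor complexity at the distinguished scales $n_k$ with $\beta_k = \alpha_k$, and then to interpolate to every intermediate $n$ via aperiodicity.

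Fix $k$ with $\beta_k = \alpha_k$; then $\F^{\beta_k}_k = \F^{\alpha_k}_k$ embeds into $Y$ as an edge-indexed subgraph, and $|V\F^{\alpha_k}_k| = n_k + 2$ by construction. I will establish $b_{n_k}(\varphi) = n_k + 2$ by two inequalities. The upper bound $b_{n_k}(\varphi) \le n_k + 2$ is a direct consequence of Lemma~\ref{lem4.6}: any $v \in VY$ lies in $V\F^{\beta_{k'}}_{k'} \subset V\F^A_{k'} \cup V\F^B_{k'}$ for some $k' \ge k$, and the lemma produces $v' \in V\F^{\alpha_k}_k$ with $[\N_{n_k}(v)] = [\N_{n_k}(v')]$. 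For the lower bound, Lemma~\ref{lem4.4}(1) furnishes $n_k + 2$ pairwise distinct $n_k$-ball classes in the universal cover of $\F^{\alpha_k}_k$, centred at the $n_k + 2$ vertices of $\F^{\alpha_k}_k$; one must check that they remain pairwise distinct in $T_Y$. Each subsequent concatenation in the direct system acts only at an end vertex $w$, replacing part of its loop either by an edge to a new vertex of the opposite colour or, in the $(i,j)$-case, by an edge plus a modified loop. From the viewpoint of the universal cover this introduces additional coloured structure into the $n_k$-neighbourhood of $w$ while leaving the $n_k$-neighbourhoods of all other vertices intact. Consequently $n_k$-ball classes can only split further as $k' \to \infty$, never merge, so the $n_k + 2$ distinct classes survive in $T_Y$.

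With $b_{n_k}(\varphi) = n_k + 2$ for infinitely many $k$, the complexity is unbounded and $\varphi$ is aperiodic; in particular $b_{n+1}(\varphi) \ge b_n(\varphi) + 1$ for every $n$, for otherwise no $n$-ball would be special and $\varphi$ would be periodic. Combined with $b_0(\varphi) = 2$ (both colours appear in $Y$) and $b_{n_k}(\varphi) = n_k + 2$, the $n_k$ increments $b_1 - b_0, b_2 - b_1, \ldots, b_{n_k} - b_{n_k - 1}$ are each at least $1$ and sum to exactly $n_k$, so each equals $1$, giving $b_n(\varphi) = n + 2$ for every $0 \le n \le n_k$. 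Letting $k \to \infty$ yields $b_n(\varphi) = n + 2$ for all $n$, so $(Y, \varphi)$ is Sturmian.

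The technical heart is the lower bound in the second paragraph: the distinctness provided by Lemma~\ref{lem4.4}(1) lives in the universal cover of $\F^{\alpha_k}_k$, whereas Sturmianness is a statement about $T_Y$. The informal ``unrolling only refines'' principle must be made rigorous by tracking coloured $n_k$-neighbourhoods through each concatenation in the direct system. Once this is done, the upper bound via Lemma~\ref{lem4.6} and the interpolation step are essentially bookkeeping.
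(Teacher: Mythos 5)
Your overall strategy is the paper's: the upper bound $b_{n_k}(\varphi)\le n_k+2$ via Lemma~\ref{lem4.6}, the lower bound via the $n_k+2$ pairwise distinct classes of Lemma~\ref{lem4.4}~(1), and then interpolation to all $n$. The one place where your argument as written does not hold up is the ``survival'' step, which you yourself flag as the technical heart. Your justification --- that each concatenation leaves the $n_k$-neighbourhoods of all vertices other than the end vertex $w$ intact, and that classes ``can only split, never merge'' --- is wrong in detail: the concatenation changes the $n_k$-neighbourhood of every vertex within distance $n_k$ of $w$ (new vertices become visible and the loop index at $w$ drops), and the relevant question is not whether two classes merge but whether the class of the ball centred at a \emph{fixed} vertex changes as the direct system grows, which a priori it could. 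This is exactly what Lemma~\ref{lem4.4}~(2) rules out: the inclusion $\F^{\beta_k}_k\hookrightarrow\F^{\beta_{k+1}}_{k+1}$ preserves the class $[\N_{n_{k+1}}(\cdot)]$ at every vertex, hence a fortiori $[\N_{n_k}(\cdot)]$, so ball classes stabilize along the limit and both inclusions $\mathbf B_{n_k}(\varphi)\subset\mathbf B_{n_k}(\varphi^{\alpha_k}_k)$ and $\mathbf B_{n_k}(\varphi)\supset\mathbf B_{n_k}(\varphi^{\alpha_k}_k)$ follow; the paper's proof is precisely this combination of Lemma~\ref{lem4.6} with Lemma~\ref{lem4.4}~(1) and~(2). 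You should therefore invoke Lemma~\ref{lem4.4}~(2) rather than re-derive it informally. Your final interpolation paragraph (from $b_{n_k}=n_k+2$ for infinitely many $k$ to $b_n=n+2$ for all $n$, using $b_{n+1}\ge b_n+1$ for non-periodic colorings) is correct and is left implicit in the paper, so spelling it out is a welcome addition.
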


\begin{proof}
By Lemma~\ref{lem4.6} for $k$ such that $\beta_k=\alpha_k$, we have $\mathbf{B}_{n_k}(\varphi) \subset  \mathbf{B}_{n_k}(\varphi^{\alpha_k}_k)$.
On the other hand, $\mathbf{B}_{n_k}(\varphi) \supset  \mathbf{B}_{n_k}(\varphi^{\alpha_k}_k)$ is implied by Lemma~\ref{lem4.4} (2).
Using Lemma~\ref{lem4.4} (1), we deduce that 
$$|\mathbf{B}_{n_k}(\varphi)| = |\mathbf{B}_{n_k}(\varphi^{\alpha_k}_k)|  = |V\F^{\alpha_k}_k| = n_k +2.$$
Since there are infinitely many such $k$'s, $(Y,\phi)$ is Sturmian.
\end{proof}

We remark that if $\alpha_k$ is not stabilized i.e. there are infinitely many $k$ such that $\alpha_{k+1}\neq\alpha_k$, then any admissible sequence $\beta_k$ satisfies Proposition~\ref{Prop4.7}, thus the direct limit is a Sturmian coloring.

\subsection{Sturmian coloring and direct limits}
In this section we show that $\Psi \circ \Phi = \mathrm{Id}$. Let us first define $\Phi$. 
For a given Sturmian coloring $(T,\phi)$, we need to define $(\alpha_k, \mathbf{i}_k, [\beta_k])$. 
The sequence $\alpha_k$ and $\G^A_n, \G^B_n$ are defined in Section~\ref{section:wocycle}. 
They determine $\mathbf{i}_k$ as follows. 

\begin{definition}[Definition of $\Phi$]\label{def:4.8} 
Let $(X, \phi)$ be an acyclic Sturmian coloring.
\begin{enumerate}
\item
Define $\mathbf{i}_k$ by
$$\mathbf{i}_{k} = \begin{cases} 
(i,j) , &\text{for \; case\; (1)},\\
(i,i',j) , &\text{for \;  case\; (2)(a)},\\
(i,j) , &\text{for \;  case\; (2)(b)},\\
(i)\; \text{or} \;(i,j) , &\text{for \;  case\; (3)},
\end{cases} $$
where the cases are as in Theorem~\ref{thm:4}.

\item For a fixed vertex $t \in T$, define $\beta_k = \beta_k(t)$ as follows:
\begin{enumerate}
\item[(i)] if $[\B_{n_{k}}(t)] \in V\G^{\overline{\alpha_{k}}}_{n_{k}}$ then set $\beta_{k-1} =\overline {\alpha_{k}}$.
\item[(ii)] if $[\B_{n_{k}}(t)] \notin 
V\G^{\overline{\alpha_{k}}}_{n_{k}}$ then set $\beta_{k-1} = \alpha_{k}$.
\end{enumerate}
\end{enumerate}

\end{definition}
One can easily check that $\mathbf{i}_k$ is $\alpha_k$-admissible.

%

\begin{lemma}\label{lem:4new}
For any $k \ge 0$, we have
$ [\B_{n_{k}}(t)] \in V\G^{\beta_{k}(t)}_{n_{k}}.$
\end{lemma}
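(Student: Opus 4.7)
The plan is to argue by cases on the two clauses of Definition~\ref{def:4.8}(2), using as the principal input the fact (contained in Theorems~\ref{thm:4p} and~\ref{thm:4}) that each isomorphism appearing in the induction algorithm is realized by the ball-restriction map $D \mapsto \underline D$. Between the levels $n_k$ and $n_{k+1}-1$ neither $\G^A$ nor $\G^B$ grows, so the restriction maps $V\G^A_n \to V\G^A_{n-1}$ and $V\G^B_n \to V\G^B_{n-1}$ are bijections sending $[\B_n(t)]$ to $[\B_{n-1}(t)]$. At the critical step $n = n_{k+1}$ only the $\alpha_{k+1}$-side grows via $\G^{\alpha_{k+1}}_{n_{k+1}} \cong \G^A_{n_{k+1}-1} \bowtie \G^B_{n_{k+1}-1}$, while the restriction $V\G^{\overline{\alpha_{k+1}}}_{n_{k+1}} \to V\G^{\overline{\alpha_{k+1}}}_{n_{k+1}-1}$ remains a bijection. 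These are the only structural facts I will need.

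In case (i), we have $[\B_{n_{k+1}}(t)] \in V\G^{\overline{\alpha_{k+1}}}_{n_{k+1}}$ and $\beta_k = \overline{\alpha_{k+1}}$. Composing the chain of restriction-bijections along the $\overline{\alpha_{k+1}}$-side from level $n_{k+1}$ down to level $n_k$ carries $[\B_{n_{k+1}}(t)]$ to $[\B_{n_k}(t)]$, so the latter lies in $V\G^{\overline{\alpha_{k+1}}}_{n_k} = V\G^{\beta_k(t)}_{n_k}$, as wanted.

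In case (ii), we have $[\B_{n_{k+1}}(t)] \in V\G^{\alpha_{k+1}}_{n_{k+1}} \setminus V\G^{\overline{\alpha_{k+1}}}_{n_{k+1}}$ and $\beta_k = \alpha_{k+1}$. The crux is to show $[\B_{n_{k+1}-1}(t)] \in V\G^{\alpha_{k+1}}_{n_{k+1}-1}$; I argue by contradiction. Were it not, then since $V\G^A_n \cup V\G^B_n = \mathbf{B}_n(\phi)$ we would have $[\B_{n_{k+1}-1}(t)] \in V\G^{\overline{\alpha_{k+1}}}_{n_{k+1}-1}$, and the restriction bijection $V\G^{\overline{\alpha_{k+1}}}_{n_{k+1}} \cong V\G^{\overline{\alpha_{k+1}}}_{n_{k+1}-1}$ would produce a unique $E \in V\G^{\overline{\alpha_{k+1}}}_{n_{k+1}}$ with $\underline E = [\B_{n_{k+1}-1}(t)]$. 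If $[\B_{n_{k+1}-1}(t)]$ is non-special, its extension is unique, forcing $E = [\B_{n_{k+1}}(t)]$ and contradicting the case hypothesis; if $[\B_{n_{k+1}-1}(t)] = S_{n_{k+1}-1}$, then since $S_n \in V\G^A_n \cap V\G^B_n$ by definition, it already lies in $V\G^{\alpha_{k+1}}_{n_{k+1}-1}$, contradicting the supposition. Having established $[\B_{n_{k+1}-1}(t)] \in V\G^{\alpha_{k+1}}_{n_{k+1}-1}$, iterating the restriction bijections on the $\alpha_{k+1}$-side from level $n_{k+1}-1$ down to $n_k$ yields $[\B_{n_k}(t)] \in V\G^{\alpha_{k+1}}_{n_k} = V\G^{\beta_k(t)}_{n_k}$.

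The main obstacle is the exceptional subcase $[\B_{n_{k+1}-1}(t)] = S_{n_{k+1}-1}$ in case (ii): here the global restriction map $V\G_{n_{k+1}} \to V\G_{n_{k+1}-1}$ is not injective, as $A_{n_{k+1}}$ and $B_{n_{k+1}}$ both restrict to $S_{n_{k+1}-1}$, so the naive bijection argument fails. This is rescued precisely by the definitional fact that $S_n$ lies in both $V\G^A_n$ and $V\G^B_n$, which short-circuits the bad case before the non-injectivity can cause trouble.
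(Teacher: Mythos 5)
Your proof is correct and rests on exactly the same inputs as the paper's: the stabilization of the two graphs between consecutive levels $n_k$ and $n_{k+1}$ given by Theorem~\ref{thm:4}, the fact that these isomorphisms are realized by the restriction $D\mapsto \underline D$, and the decomposition $V\G^A_n\cup V\G^B_n=\mathbf B_n(\phi)$; your case (i) coincides with the paper's. The only divergence is in case (ii): the paper transports the non-membership $[\B_{n_{k+1}}(t)]\notin V\G^{\overline{\alpha_{k+1}}}_{n_{k+1}}$ all the way down the $\overline{\alpha_{k+1}}$-side via the asserted ``if and only if'' (whose forward, extension direction is where the special-ball ambiguity could in principle intervene) and passes to the complement only at level $n_k$, whereas you pass to the complement at the top level, cross the single critical step $n_{k+1}\to n_{k+1}-1$ by a contradiction argument, and then descend on the $\alpha_{k+1}$-side using only the restriction direction, which is unambiguous. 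Your variant buys a small amount of rigor: it isolates and explicitly disposes of the one delicate subcase $[\B_{n_{k+1}-1}(t)]=S_{n_{k+1}-1}$ via the observation that $S_n\in V\G^A_n\cap V\G^B_n$ by definition, a point the paper's ``restriction and extension'' phrasing leaves implicit. Both arguments are sound; yours is marginally more self-contained at that one step.
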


\begin{proof}
By Theorem~\ref{thm:4} (3), 
$\G^{\overline{\alpha_{k+1}}}_{n_{k+1}} \cong \G^{\overline{\alpha_{k+1}}}_{n_{k}},$ 
where the isomorphism is given by the restriction and extension, i.e. 
$[\B_{n_{k+1}}(t)] \in V\G^{\overline{\alpha_{k+1}}}_{n_{k+1}}$
if and only if $[\B_{n_{k}}(t)] \in V\G^{\overline{\alpha_{k+1}}}_{n_{k}}.$

If $\beta_{k} \ne \alpha_{k+1}$, then by Definition~\ref{def:4.8} (2) (i),
$[\B_{n_{k+1}}(t)] \in V\G^{\overline{\alpha_{k+1}}}_{n_{k+1}},$ thus  
$[\B_{n_{k}}(t)] \in V\G^{\beta_{k}}_{n_k}.$

If $\beta_{k} = \alpha_{k+1}$, then by Definition~\ref{def:4.8} (2) (ii),
$[\B_{n_{k+1}}(t)] \notin V\G^{\overline{\alpha_{k+1}}}_{n_{k+1}},$ thus $[\B_{n_{k}}(t)] \notin V\G^{\overline{\alpha_{k+1}}}_{n_k},$
which implies that 
\begin{equation*}
[\B_{n_{k}}(t)] \in V\G^{\alpha_{k+1}}_{n_k} = V\G^{\beta_{k}}_{n_k}. 
\qedhere
\end{equation*}
\end{proof}

\begin{lemma}\label{lem:4.5} 
The sequence $\beta_k$ defined above is $\alpha_k$-admissible, i.e. it satisfies $\beta_k = \alpha_k$ or $\beta_{k-1}$.
\end{lemma}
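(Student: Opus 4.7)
The plan is to reduce $\alpha_k$-admissibility to a single dichotomy and then apply Lemma~\ref{lem:4new} directly. By the definition of admissibility in \eqref{eqn:4.2}, I need to show that if $\beta_k \ne \alpha_k$, then $\beta_k = \beta_{k-1}$. So I first observe the dichotomy: either $\beta_k = \alpha_k$ (in which case there is nothing to prove) or $\beta_k = \overline{\alpha_k}$, and I only need to handle the second case.

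In the second case, Lemma~\ref{lem:4new} gives $[\B_{n_k}(t)] \in V\G^{\beta_k}_{n_k} = V\G^{\overline{\alpha_k}}_{n_k}$. But Definition~\ref{def:4.8}(2)(i) says precisely that whenever $[\B_{n_k}(t)] \in V\G^{\overline{\alpha_k}}_{n_k}$ one sets $\beta_{k-1} = \overline{\alpha_k}$. Therefore $\beta_{k-1} = \overline{\alpha_k} = \beta_k$, which is what was required. This finishes the argument.

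The main (only) subtlety is aligning the index shift in Definition~\ref{def:4.8}(2) (which defines $\beta_{k-1}$ in terms of the test $[\B_{n_k}(t)] \in V\G^{\overline{\alpha_k}}_{n_k}$) with the statement of Lemma~\ref{lem:4new} (which gives $[\B_{n_k}(t)] \in V\G^{\beta_k}_{n_k}$ at the same index $k$); once these are compared at a common level, the conclusion is immediate. There is no real obstacle, since Lemma~\ref{lem:4new} already encodes the compatibility that drives the admissibility property.
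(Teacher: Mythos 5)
Your proof is correct and is essentially the same as the paper's: both rest on combining Lemma~\ref{lem:4new} with the case distinction in Definition~\ref{def:4.8}~(2). The only difference is presentational — the paper assumes $\beta_k \neq \beta_{k-1} = \alpha_k$ and derives a contradiction via clause (ii), while you argue the contrapositive directly via clause (i).
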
 

\begin{proof} 
Suppose that $\beta_k \neq \beta_{k-1} = \alpha_k$. 
By Definition~\ref{def:4.8} (2) (ii), we have 
\begin{equation*}\label{eq:4.2}
[\B_{n_{k}}(t)] \notin V\G^{\overline{\alpha_{k}}}_{n_{k}}. 
\end{equation*}
However, by Lemma~\ref{lem:4new}, we have
$
[\B_{n_{k}}(t)] \in V\G^{\beta_{k}}_{n_{k}} = V\G^{\overline{\alpha_{k}}}_{n_{k}},
$
which is a contradiction.
\end{proof}

\begin{remark} 
If $\phi$ is unbounded, then  $\beta_k= \alpha_k$ for arbitrary large $k$. If $\phi$ is bounded, then there exists $k_0$ such that $\beta_k = \alpha_k$ for all $k \geq k_0$.
\end{remark}

\begin{lemma}\label{lem:8}
Fix $t\in VT$ and let $\beta_k=\beta_k(t)$. Let $t'$ be the vertex adjacent to $t$. 
Then there exists $k_0$ such that $\beta_k(t') = \beta_k (t)$ for all $k \geq k_0$. 
 \end{lemma}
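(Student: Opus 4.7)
By Lemma~\ref{lem:4new} and Definition~\ref{def:4.8}(2), the equality $\beta_k(t) = \beta_k(t')$ is equivalent to the statement that $[\B_{n_{k+1}}(t)]$ and $[\B_{n_{k+1}}(t')]$ lie simultaneously in $V\G^{\overline{\alpha_{k+1}}}_{n_{k+1}}$, or simultaneously in its complement. Since $t$ and $t'$ are adjacent in $T$, the classes $[\B_{n_{k+1}}(t)]$ and $[\B_{n_{k+1}}(t')]$ are either equal (in which case the conclusion is immediate) or adjacent in $\G_{n_{k+1}}$, so I focus on the adjacent case.

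Under the acyclicity assumption, $\G_{n_{k+1}}$ is a tree. By Lemma~\ref{2.11KL}(2), every vertex other than $S_{n_{k+1}}$ has at most two neighbors, while Lemma~\ref{newlem} combined with Lemma~\ref{lem:merged}(1) confines the neighbors of $S_{n_{k+1}}$ in $\G_{n_{k+1}}$ to the set $\{A_{n_{k+1}}, B_{n_{k+1}}, C_{n_{k+1}}\}$. Hence $\G_{n_{k+1}}$ decomposes as the union, meeting at $S_{n_{k+1}}$, of an $A$-branch, a $B$-branch, and a common branch through $C_{n_{k+1}}$. By Definition~\ref{def:gnab}, the subset $V\G^{\overline{\alpha_{k+1}}}_{n_{k+1}}$ consists precisely of $\{S_{n_{k+1}}\}$ together with the $\overline{\alpha_{k+1}}$-branch and the common branch. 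Consequently, the only edge of $\G_{n_{k+1}}$ joining $V\G^{\overline{\alpha_{k+1}}}_{n_{k+1}}$ to its complement is the edge between $S_{n_{k+1}}$ and its $\alpha_{k+1}$-branch neighbor, which is $A_{n_{k+1}}$ if $\alpha_{k+1} = A$ and $B_{n_{k+1}}$ if $\alpha_{k+1} = B$.

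It follows that, if $\beta_k(t) \neq \beta_k(t')$, then after possibly interchanging $t$ and $t'$, we must have $[\B_{n_{k+1}}(t)] = S_{n_{k+1}}$ and $[\B_{n_{k+1}}(t')]$ equal to the $\alpha_{k+1}$-branch neighbor of $S_{n_{k+1}}$; in particular, $t$ is a center of the special ball $S_{n_{k+1}}$. If $\phi$ is of bounded type, the type set of $t$ is finite, so this happens for only finitely many $k$, which finishes the proof.

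The main obstacle is the unbounded case, where the type sets of both $t$ and $t'$ are infinite and $\alpha_k$ alternates infinitely often. The plan is to track the trajectory $[\B_n(t)]$ for $n > n_{k+1}$: since $[\B_{n_{k+1}}(t')]$ lies in the $\alpha_{k+1}$-only branch, in particular $[\B_{n_{k+1}}(t')] \neq C_{n_{k+1}}$, Lemma~\ref{lem:merged}(2) together with acyclicity forbids $[\B_{n_{k+1}+1}(t)]$ from falling into the opposite branch, forcing $[\B_{n_{k+1}+1}(t)]$ to lie on the $\alpha_{k+1}$-side or to remain special. Iterating this constraint up to the next concatenation level $n_{k+2}$ locks $[\B_n(t)]$ to the $\alpha_{k+1}$-side and, combined with the alternation of $\alpha_k$ between $A$ and $B$ in the unbounded case, yields an incompatibility with the disagreement $\beta_k(t) \neq \beta_k(t')$ persisting infinitely often, producing the required $k_0$.
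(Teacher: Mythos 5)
Your reduction and the bounded-type half are sound: under acyclicity $\G_{n}$ is a tree whose only edge joining $V\G^{\overline{\alpha_{k+1}}}_{n_{k+1}}$ to its complement issues from $S_{n_{k+1}}$, so a disagreement $\beta_k(t)\ne\beta_k(t')$ forces one of the two adjacent vertices to carry the special ball $S_{n_{k+1}}$, and a finite type set then bounds the number of such $k$. The problem is that the other half — the unbounded case, which is where the real content of the lemma lies — is only a ``plan.'' You assert that iterating Lemma~\ref{lem:merged}(2) up to $n_{k+2}$ ``locks $[\B_n(t)]$ to the $\alpha_{k+1}$-side'' and that this ``yields an incompatibility'' with infinitely many disagreements, but you never exhibit the incompatibility. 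Being locked to the $\alpha_{k+1}$-side up to the next concatenation level does not by itself prevent $[\B_{n_l}(t)]$ (or $[\B_{n_l}(t')]$) from landing on $S_{n_l}$ at some later stage $l$ with the other ball in the $\alpha_l$-only branch, which is exactly the configuration you must exclude; nothing in your sketch rules this out, and the appeal to ``alternation of $\alpha_k$'' is not a substitute for an argument.

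The paper closes this gap with a different and quite concrete mechanism that your sketch misses: once $[\B_{n_k}(t)]=S_{n_k}$ and $[\B_{n_k}(t')]=A_{n_k}$ (say $\alpha_k=A$), the vertex $t$ also has a \emph{second} neighbor $t''$ with $[\B_{n_k}(t'')]=C_{n_k}=B_{n_k}\ne A_{n_k}$. Having two neighbors whose ball classes are distinct at level $n_k$ is a property that persists for every $n\ge n_k$, so $[\B_n(t)]$ can never again be an end vertex of $\G^A_n$ or $\G^B_n$; in particular it can never again occupy the degree-one position of $S_{n_l}$ in $\G^{\overline{\alpha_l}}_{n_l}$ that a disagreement requires (compare Proposition~\ref{lem:nk}). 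Hence a disagreement at stage $k$ forces $\beta_l(t)=\beta_l(t')$ for all $l>k$, uniformly in the bounded and unbounded cases. To repair your proof you would need either to supply this ``two distinct neighbors forever'' argument or some genuine substitute for it; as written, the unbounded case is unproven.
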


 
\begin{proof} 
It is enough to show that if there is some $k > K$ such that 
$$[\B_{n_{k}}(t)] \in V\G^{\overline{\alpha_{k}}}_{n_{k}} \quad \text{ and } \quad  [\B_{n_{k}}(t')] \notin V\G^{\overline{\alpha_{k}}}_{n_{k}}, $$ 
then for $\beta_l = \beta_l(t')$ for all $l >k$.

Let $k$ be such an integer. Without loss of generality, let us assume that $\alpha_k = A$.  Then by the definition of $\G_n^A$, $\G_n^B$,
we have $[\B_{n_{k}}(t)] = S_{n_{k}}$ and $[\B_{n_{k}}(t')] = A_{n_k}$.
Since $A_{n_k+1}$ is adjacent to $A_{n_k}$ and not $B_{n_k}$, we have  $[\B_{n_{k}+1}(t)] = A_{n_{k}+1}$.
Let $t'' \in VT$ be an adjacent vertex of $t$ such that $[\B_{n_{k}}(t'')] = B_{n_{k}} = C_{n_{k}}$.
From $|V\G^B_{n_k}| > 1$, we check $B_{n_{k}} \ne S_{n_k}$.
Therefore $t$ has two adjacent vertices $t', t''$ with distinct $n_k$-balls. 

It follows that for any $n \ge n_k +1$, $[\B_n(t)]$ always adjacent to two different $n$-balls in both $\G^A_n$ and $\G^B_n$, whenever $[\B_n(t)]$ is a vertex of them.
We conclude that that $[\B_n(t)]$ is not an end vertex of $\G_n^A$ nor $\G_n^B$ for $n \ge n_k +1$. In other words, $\beta_l(t) = \beta_l(t')$ for all $l > k$.
\end{proof}
 
Repeating the above lemma for vertices between any pair of vertices $t,t'$, the sequence $\beta_k(t')$ is eventually equal to $\beta_k(t)$, thus $\Phi$ is well-defined.

\begin{remark} 
The sequence $(\alpha_k)$ has a role corresponding to the slope for the irrational rotation associated to a Sturmian word (or the ratio of alphabets $a$ and $b$ appearing in the word). The freedom coming from the intercept (starting point of the irrational rotation) of Sturmian words are replaced by a sequence $\beta_k$ satisfying Lemma~\ref{lem:4.5}.
\end{remark}

Define $(Y, \varphi)$ to be the direct limit $\varinjlim \F^{\beta_k}_k$ in Section~\ref{sec4.2} with $\F^A_k=\G^A_{n_k}$, $\F^B_k=\G^B_{n_k}$.

\begin{theorem}\label{thm:6} 
We have $\Psi \circ \Phi =1$, i.e. for given Sturmian coloring, let $(X, \phi)$ be the quotient graph defined in the introduction. 
Then the direct limit $ (Y, \varphi)$ is equal to $(X, \phi)$.

\end{theorem}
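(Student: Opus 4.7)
My plan is to exhibit an isomorphism $\iota \colon (X, \phi) \to (Y, \varphi)$ of colored edge-indexed graphs. For each $Gx \in VX$, pick a representative $x \in V\T$. By Lemma~\ref{lem:8}, there exists $k_0$ such that $\beta_k(x) = \beta_k$ for all $k \ge k_0$, and by Lemma~\ref{lem:4new}, $[\B_{n_k}(x)] \in V\G^{\beta_k}_{n_k} = V\F^{\beta_k}_k$ for these $k$. Define $\iota(Gx) \in VY$ to be the image of $[\B_{n_k}(x)]$ under the canonical map $\F^{\beta_k}_k \to Y$.

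First I would check that $\iota$ is well-defined. Independence of the representative $x$ follows because each $g \in G$ induces a color-preserving isomorphism $\B_{n_k}(x) \to \B_{n_k}(gx)$, so $[\B_{n_k}(x)] = [\B_{n_k}(gx)]$. Independence of $k \ge k_0$ follows from Theorem~\ref{thm:4p}: the isomorphism between $\F^{\beta_{k+1}}_{k+1}$ and $\F^{\beta_k}_k$ (or $\F^A_k \bowtie \F^B_k$) realized by the restriction $D \mapsto \underline{D}$ sends $[\B_{n_{k+1}}(x)]$ to $[\B_{n_k}(x)]$; in the opposite direction, the inclusion $\F^{\beta_k}_k \hookrightarrow \F^{\beta_{k+1}}_{k+1}$ selects the extension of $[\B_{n_k}(x)]$ recorded by $\beta_k(x) = \beta_k$, namely $[\B_{n_{k+1}}(x)]$, so the image in $Y$ is the same at every level $k \ge k_0$.

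Next I would verify that $\iota$ is a bijection. Surjectivity: every vertex of $Y$ is represented, for $k$ large, by some $v \in V\F^{\beta_k}_k$, which is a class $[\B_{n_k}(y)]$ for some $y \in V\T$; by construction $\iota(Gy)$ equals the image of $v$ in $Y$. Injectivity: if $\iota(Gx) = \iota(Gy)$, then $[\B_{n_k}(x)] = [\B_{n_k}(y)]$ for all large $k$, so $x$ and $y$ are congruent (Definition~\ref{def:subball}), hence $Gx = Gy$. The coloring is preserved because the color of $\iota(Gx)$ is the color of the center of $[\B_{n_k}(x)]$, namely $\phi(x)$.

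Finally, I would check that $\iota$ preserves edges and their indices. At a non-special class $v = [\B_{n_k}(x)] \ne S_{n_k}$, the $(n_k+1)$-ball at $x$ is uniquely determined, so the $1$-neighborhood of $\iota(Gx)$ in $\F^{\beta_k}_k$, with indices as in Definition~\ref{def:gnab}, agrees with the $1$-neighborhood of $Gx$ in $X$. At a special class $S_{n_k}$, the two extensions $A_{n_k+1}, B_{n_k+1}$ yield two distinct local pictures, and Lemma~\ref{lem:4new} ensures that $\beta_k(x) = \beta_k$ records the one occurring at $x$, so again the local data match. I expect the main obstacle to be precisely this bookkeeping at special balls: one must verify that the $(i)$- and $(i,j)$-concatenations of Definition~\ref{def:sum} redistribute the edge-index $m$ and loop-indices $\ell_i$ at the joining vertices exactly as the structure of $X$ around $Gx$ prescribes, which is what Theorem~\ref{thm:4p} supplies once the admissibility of $(\mathbf{i}_k)$ and $(\beta_k)$ is in place.
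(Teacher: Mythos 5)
Your proposal is correct and follows essentially the same route as the paper: the paper's proof likewise combines Lemmas~\ref{lem:4new} and \ref{lem:8} to place $[\B_{n_k}(t)]$ in $V\G^{\beta_k}_{n_k}$ for all large $k$, obtains an edge-index-preserving injection from $VX$ into the vertex set of $\varinjlim \G^{\beta_k}_{n_k}$, and deduces surjectivity from the fact that a vertex of the direct limit determines a compatible sequence of ball classes and hence a vertex of $X$. Your write-up simply makes the well-definedness checks and the bookkeeping at the special balls more explicit than the paper does.
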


\begin{proof} 
By Lemmas \ref{lem:4new} and \ref{lem:8}, for any vertex $t \in VX$, there exists $k(t)$ such that $\N_{n_k}(t) \in V\G_{n_k}^{\beta_k}$ for $k \geq k(t)$. Moreover, for $k > k(t)$, the $n_k$-ball centered at $t$ is not an end vertex of $\G^{\beta_k}_{n_k}$, thus the injection preserves the adjacencent $n_k$-balls. Thus there is a injection from $VX$ to the set of vertices of $\varinjlim G^{\beta_k}_{n_k}(t)$
preserving the edge indices. This map is clearly surjective, since any vertex of $\varinjlim G^{\beta_k}_{n_k}(t)$ determines a sequence of equivalence classes of $n$-balls, which determines a vertex of $X$.

\end{proof}

\section{Sturmian colorings of bounded type}\label{sec:5}

\subsection{Cyclic Sturmian colorings}\label{sec:4}

In this subsection, we investigate cyclic Sturmian colorings. The main result in this section is Proposition~\ref{prop:circular}. 

\begin{lemma}\label{lem:cyclic} Suppose that $\G_n$ has a cycle. Then
\begin{enumerate}
\item the special ball $S_n$ is in the cycle.

\item the special ball $S_n$ is adjacent to $A_n, B_n, C_n$ only, apart from $S_n$ itself.
\end{enumerate}
\end{lemma}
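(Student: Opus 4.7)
My plan is to tackle parts (1) and (2) separately.

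For part (1), I would proceed by contradiction. Suppose some cycle $C$ in $\G_n$ avoids $S_n$. Every vertex of $C$ is then non-special, and by Lemma~\ref{2.11KL}(2) admits at most two non-loop neighbors in $\G_n$; these two slots are already consumed by the cycle edges, so no further non-loop edge is incident to any vertex of $C$. Hence $C$ constitutes an entire connected component of $\G_n$. But $\G_n$ is connected --- any two classes $[\N_n(x)], [\N_n(y)]$ are joined by the sequence of classes along the geodesic from $x$ to $y$ in $T$ --- so $C = \G_n$, contradicting that $S_n$ lies in $\G_n$ but not on $C$.

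For part (2), I would argue by downward induction on $n$, proving the slightly stronger statement that no class $D \in V\G_n$ with $D \neq A_n, B_n, C_n, S_n$ can be adjacent to $S_n$. The base case $n = 0$ is vacuous since $V\G_0 = \{A_0, B_0\} \subseteq \{A_0, B_0, C_0, S_0\}$. For the inductive step, assume such a $D$ exists at level $n$. By Lemma~\ref{lem:merged}(1), $D$ has non-loop degree exactly one in $\G_n$, adjacent only to $S_n$. I would then examine the restriction $\underline D$: since every $T$-neighbor of a center of $D$ has $n$-ball class in $\{D, S_n\}$, its concentric $(n-1)$-ball class lies in $\{\underline D, S_{n-1}\}$, forcing $\underline D$ to have unique non-self neighbor $S_{n-1}$ in $\G_{n-1}$. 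Moreover $\underline D \neq S_{n-1}$ (else $D$ would be one of the extensions $A_n$ or $B_n$) and $\underline D \neq A_{n-1}, B_{n-1}, C_{n-1}$ (each extends uniquely to $A_n, B_n, S_n$ respectively, all distinct from $D$). Hence $\underline D$ is a forbidden vertex at level $n-1$, contradicting the inductive hypothesis.

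The most delicate point will be tracking how adjacency and the degree-one condition propagate under restriction; once I confirm that the $T$-neighborhood of any center of $D$ sees only classes $D$ and $S_n$, restriction transports the degree-one property unchanged, and the identification of $A_{n-1}, B_{n-1}, C_{n-1}$ through their unique extensions closes the induction.
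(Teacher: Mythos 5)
Your part (1) is correct and is essentially the paper's own argument: by Lemma~\ref{2.11KL}(2) every non-special class has at most two distinct neighbours, so a cycle avoiding $S_n$ would be an entire connected component of the connected graph $\G_n$, which is impossible.

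Part (2) has a genuine gap. The strengthened statement you propose to prove by downward induction --- that for \emph{every} $n$ no class $D\neq A_n,B_n,C_n,S_n$ is adjacent to $S_n$ --- is false: the paper exhibits a Sturmian coloring (the second example in Section~\ref{sec:4}) in which, for $n=1$, the special ball $S_1$ is adjacent to a $1$-ball that is none of $A_1,B_1,C_1,S_1$. By Lemma~\ref{lem:merged}(1) such a configuration merely forces a cycle in $\G_{n+1}$; it is not forbidden. Your induction must therefore contain an error, and it does, in the passage from $D$ to $\underline D$. First, the concentric restriction of $S_n$ is $C_{n-1}$, not $S_{n-1}$ (these coincide only for $n<K$), so restricting the $1$-neighbourhood of a center of $D$ shows that $\underline D$ is adjacent only to $\underline D$ and $C_{n-1}$ --- the degree-one property does not transport to an adjacency with $S_{n-1}$. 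Second, the exclusions $\underline D\neq A_{n-1},B_{n-1}$ fail: $A_{n-1}$ does not ``extend uniquely to $A_n$''. By construction $A_n$ and $B_n$ are the two extensions of $S_{n-1}$, i.e.\ $\underline{A_n}=\underline{B_n}=S_{n-1}$, whereas the unique extension of $A_{n-1}$ (when $A_{n-1}\neq S_{n-1}$) is $\overline{A_{n-1}}$, which need not lie in $\{A_n,B_n,C_n,S_n\}$. So the case $\underline D\in\{A_{n-1},B_{n-1}\}$ is not ruled out and the induction collapses. The missing idea is that part (2) must use the cycle hypothesis through part (1): since $S_n$ lies on the cycle, the cycle uses two distinct non-loop neighbours of $S_n$. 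If an extra neighbour $D$ existed, then $D$ would be always adjacent to $S_n$ by Lemma~\ref{newlem}(1), and combining Lemma~\ref{newlem}(3) with Lemma~\ref{2.11KL}(1) and (4) forces the neighbours of $S_n$ other than $S_n$ itself to be exactly $C_n$ and $D$ (so that $\{A_n,B_n\}=\{S_n,C_n\}$); hence the cycle would have to pass through $D$, which is impossible because Lemma~\ref{lem:merged}(1) makes $D$ a vertex of degree one.
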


\begin{proof}
(1) Suppose that $\G_n$ has a vertex $v$ which is not in the cycle.
Since $\G_n$ is connected, there exists a vertex $w$ in the cycle connected to $v$.
Since $S_n$ is the unique vertex of $\G_n$ with degree 3 by Lemma~\ref{newlem} (2), it follows that $w=S_n$ and $S_n$ is in the cycle. 

(2) Suppose that there exists $D \neq S_n$ distinct from $A_n, B_n, C_n$ and adjacent to $S_n$. 
By Lemma~\ref{newlem} (1), $S_n \neq C_n$ and by Lemma~\ref{lem:merged} (1),  $D$ is of degree 1.
Since $S_n$ is adjacent to $C_n$ by Lemma~\ref{2.11KL} (4),
 the classes of distance 1 from $S_n$ in $A_{n+1}, B_{n+1}$ are $\{C_n , S_n, D\}$ (with possibly distinct indices). 
Using Lemma~\ref{2.11KL} (4) again, $C_n$ and $D$ belong to the cycle, contradicting that $D$ is of degree 1.
\end{proof}

\begin{lemma}\label{lem:graphs} 
If $\G_n$ has a cycle not containing $C_n$, then $\G_{n+\ell}$ has a cycle containing $C_{n+\ell}$ for some $\ell \ge 1$.
\end{lemma}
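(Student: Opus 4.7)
The plan is to describe the shape of $\G_n$ as a single cycle plus a hanging tail rooted at $C_n$, and then to track how this tail shrinks by one vertex under the passage $n \mapsto n+1$ until $C_{n+\ell}$ is forced into the cycle.

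By Lemma~\ref{lem:cyclic}(1), $S_n$ lies on the cycle, and by Lemma~\ref{lem:cyclic}(2) its only neighbors apart from itself lie in $\{A_n, B_n, C_n\}$. Since the cycle avoids $C_n$, the two cycle-neighbors of $S_n$ must be $A_n$ and $B_n$. Lemma~\ref{2.11KL}(2) forces every vertex other than $S_n$ to have at most two distinct neighbors, so $\G_n$ (disregarding loops) decomposes as a cycle $[S_n, A_n, u_1, \dots, u_j, B_n, S_n]$ together with a simple hanging tail $[S_n, C_n, w_1, \dots, w_m]$ terminating in a degree-one vertex (with $m \ge 0$). Note that $S_n \neq C_n$, so $S_{n+1} \neq A_{n+1}, B_{n+1}$.

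Passing to $\G_{n+1}$, I would first lift the cycle: Lemma~\ref{2.11KL}(1) gives $A_{n+1}$ adjacent to $\overline{A_n}$ and $B_{n+1}$ adjacent to $\overline{B_n}$; Lemma~\ref{2.11KL}(3) gives $A_{n+1}$ and $B_{n+1}$ both adjacent to $S_{n+1}$; and every adjacency along the old cycle between non-special vertices lifts to the $(n+1)$-extensions. This produces the cycle
\[
[S_{n+1}, A_{n+1}, \overline{A_n}, \overline{u_1}, \dots, \overline{u_j}, \overline{B_n}, B_{n+1}, S_{n+1}]
\]
in $\G_{n+1}$, whose vertices are distinct because their restrictions to $n$-balls are distinct. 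Next I would shrink the tail: if $m \ge 1$, the extension $\overline{w_1}$ is adjacent to $\overline{C_n} = S_{n+1}$ (lifting the $w_1$--$C_n$ adjacency), and by Lemma~\ref{lem:cyclic}(2) applied to $\G_{n+1}$ it lies in $\{A_{n+1}, B_{n+1}, C_{n+1}\}$. Since its restriction $w_1 \neq S_n$ differs from the common restriction of $A_{n+1}$ and $B_{n+1}$, we must have $\overline{w_1} = C_{n+1}$. Hence the tail at level $n+1$ is $[S_{n+1}, C_{n+1}, \overline{w_2}, \dots, \overline{w_m}]$, exactly one vertex shorter than at level $n$, and the new cycle still avoids $C_{n+1}$.

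Iterating this procedure, $\G_{n+k}$ carries a cycle avoiding $C_{n+k}$ with a tail of $m+1-k$ vertices beyond $S_{n+k}$, for each $0 \le k \le m$. At $k = m+1$ the tail has vanished, so $S_{n+m+1}$ has no neighbor outside the cycle; since Lemma~\ref{2.11KL}(4) requires $C_{n+m+1}$ to be adjacent to $S_{n+m+1}$, it must coincide with one of the cycle-neighbors $A_{n+m+1}$ or $B_{n+m+1}$, giving the desired $\ell = m+1$. The most delicate step to justify is that no spurious cycle or extra edge appears in $\G_{n+1}$; this should follow from the vertex count $|V\G_{n+1}| = n+3 = |V\G_n|+1$ combined with the degree bound of Lemma~\ref{2.11KL}(2), since any additional edge would force some non-special vertex to have degree at least three.
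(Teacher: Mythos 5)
Your proof is correct and takes essentially the same route as the paper's: both decompose $\G_n$ into the cycle through $S_n, A_n, B_n$ and the hanging path through $C_n$, lift the cycle to $\G_{n+1}$ via Lemma~\ref{2.11KL} (1), (3) and extension of non-special balls, identify $C_{n+1}$ as the next tail vertex, and iterate until the tail is exhausted. The only cosmetic difference is your closing step (forcing $C_{n+\ell}$ to coincide with $A_{n+\ell}$ or $B_{n+\ell}$ via Lemma~\ref{2.11KL} (4)) versus the paper's observation that at that stage every vertex of $\G_{n+\ell}$ lies on the cycle.
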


\begin{proof} 
By Lemma~\ref{lem:cyclic} (2), $S_n$ is adjacent to $A_n, B_n, C_n, S_n$ only.
Since $S_n$ is the unique vertex of degree at least 3 and it is in the cycle, and since $C_n$ is adjacent to $S_n$, 
the graph $\G_n$ is the union of a cycle containing $S_n, A_n, B_n$ and a line segment containing $S_n, C_n$. 

\medskip

\begin{center}
\begin{tikzpicture}[every loop/.style={}]
  \tikzstyle{every node}=[inner sep=-1pt]
  
  \node (-1) at (-1,2) {$\bullet$} node [left=20pt] at (-1,2) {$\G_n$ :} node [below=4pt] at (-1,2) {$E^\ell$};
  \node (0) at (0,2) {$\cdots$} node [below=10pt] at (0,2) {$\ldots$};
  \node (1) at (1,2) {$\bullet$} node [below=4pt] at (1,2) {$E^2$};
  \node (2) at (2,2) {$\bullet$} node [below=4pt] at (2,2) {$C_n$};
  \node (3) at (3,2) {$\bullet$} node [below=4pt] at (3,2) {$S_n$};
  \node (4) at (4,2.3) {$\bullet$} node [below=4pt] at (4,2.3) {$A_n$};
  \node (5) at (4,1.7) {$\bullet$} node [below=4pt] at (4,1.7) {$B_n$};
  \node (6) at (5,2.3) {$\bullet$} node [below=4pt] at (5,2.3) {$D^1$};
  \node (7) at (5,1.7) {$\bullet$} node [below=4pt] at (5,1.7) {$D^m$};
  \node (8) at (6,2.3) {$\bullet$};
  \node (9) at (6,1.7) {$\bullet$};
  \node (10) at (7,2) {$\bullet$};

  \path[-] 
  	(-1)  edge (0)
	(0)  edge (1)
	(1)  edge (2)
	(2)  edge (3)
	(3)  edge (4)
	(3)  edge (5)
        (4)  edge (6)
        (5)  edge (7)
     (8)  edge (10)
     (9)  edge (10);
   \draw[dashed] (6) -- (8);
   \draw[dashed] (7) -- (9);
  
  \node (19) at (-1,0) {$\bullet$} node [left=20pt] at (-1,0) {$\G_{n+1}$ :} node [below=4pt] at (-1,0) {$\overline{E^\ell}$};
  \node (20) at (0,0) {$\cdots$} node [below=10pt] at (0,0) {$\ldots$};
  \node (21) at (1,0) {$\bullet$} node [below=4pt] at (1,0) {$C_{n+1}$};
  \node (22) at (2,0) {$\bullet$} node [below=4pt] at (2,0) {$S_{n+1}$};
  \node (23) at (3,.3) {$\bullet$} node [below=4pt] at (3,.3) {$A_{n+1}$};
  \node (24) at (3,-.3) {$\bullet$} node [below=4pt] at (3,-.3) {$B_{n+1}$};
  \node (25) at (4,.3) {$\bullet$} node [below=4pt] at (4,.3) {$\overline{A_n}$};
  \node (26) at (4,-.3) {$\bullet$} node [below=4pt] at (4,-.3) {$\overline{B_n}$};
  \node (27) at (5,.3) {$\bullet$} node [below=4pt] at (5,.3) {$\overline{D^1}$};
  \node (28) at (5,-.3) {$\bullet$} node [below=4pt] at (5,-.3) {$\overline{D^m}$};
  \node (29) at (6,.3) {$\bullet$};
  \node (30) at (6,-.3) {$\bullet$};
  \node (31) at (7,0) {$\bullet$};

  \path[-] 
	(19)  edge (20)
	(20)  edge (21)
	(21)  edge (22)
	(22)  edge (23)
	(22)  edge (24)
	(23)  edge (25)
	(24)  edge (26)
           (25)  edge (27)
           (26)  edge (28)
           (29) edge (31)
           (30) edge (31) ;
	\draw[dashed] (27) -- (29);
	\draw[dashed] (28) -- (30);
\end{tikzpicture}
\end{center}
Denote the cycle by $[S_n A_n D^1 D^2 \dots D^m B_n S_n]$ and the line segment by $[S_n C_n E^2 \cdots E^\ell]$. 
Since  $(n+1)$-ball extension of $S_n$ which is adjacent to $A_n$ (resp. $B_n$) is $A_{n+1}$ (resp. $B_{n+1}$), it follows that
$[S_{n+1} A_{n+1}\overline{A_n} \overline{D^1}  \dots \overline{D^m} \overline{B_n} S_{n+1}]$ is a cycle in $\G_{n+1}$.

If there are no vertices $E^i$, $C_n$ is the unique vertex not in the cycle in $\G_n$, thus all the vertices in $\G_{n+1}$ belong to the cycle.

If $\ell \geq 2$, then $\overline{E^2}$ is adjacent to every $\overline{C_n} = S_{n+1}$, which implies that $\overline{E^2} = C_{n+1}$.
Only the path $[\overline{E^2} \overline{E^3} \dots \overline{E^\ell}]$ in $\G_{n+1}$ is not in the cycle.
Repeating this procedure $j$ times, it follows that only the path $[\overline{E^{j+1}}^{j} \overline{E^{j+2}}^{j} \cdots \overline{E^\ell}^{j}]$ is not in the cycle. (Here we denote the extension of an $n$-ball $E$ to the $(n+j)$-ball by $\overline{E}^{j}$.)  Thus $\G_{n+\ell}$ contains all the vertices, in particular $C_{n+\ell}.$
\end{proof}

\begin{lemma}\label{lem:graphs2} 
If $\G_n$ has a cycle containing $C_n = S_n$, then $\G_{n+1}$ has a cycle containing $C_{n+1} \ne S_{n+1}$.
\end{lemma}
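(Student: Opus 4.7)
The plan is to follow the strategy of Lemma~\ref{lem:graphs}: extract the shape of the cycle in $\G_n$ from Lemma~\ref{lem:cyclic}, then lift it to $\G_{n+1}$ via the extension map, exploiting the extra constraint $C_n = S_n$ that forces either $A_{n+1}=S_{n+1}$ or $B_{n+1}=S_{n+1}$.

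First, I would identify the shape of the cycle in $\G_n$. By Lemma~\ref{lem:cyclic}, $S_n$ lies on the cycle, and the only neighbors of $S_n$ apart from itself are contained in $\{A_n, B_n, C_n\} = \{A_n, B_n, S_n\}$. Since the cycle has length $>1$, $S_n$ must have two distinct neighbors on it, forcing both $A_n$ and $B_n$ to differ from $S_n$ and to appear as those neighbors. Thus the cycle has the form
\[
[S_n\,A_n\,D^1\cdots D^m\,B_n\,S_n]
\]
with $A_n, B_n, D^1, \ldots, D^m$ pairwise distinct and all non-special (the $D^i$ being non-special since otherwise they would equal $S_n$, which is uniquely the special class).

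Next, from $C_n = S_n$ it follows that $S_{n+1}$ is one of the two extensions of $S_n$, so without loss of generality $A_{n+1} = S_{n+1}$. I would then lift the cycle to $\G_{n+1}$ by replacing each non-special vertex $U$ by its unique extension $\overline{U}$ and attaching the endpoints to the two extensions of $S_n$: using Lemma~\ref{2.11KL}~(1) at both ends (so that $\overline{A_n}$ is adjacent to $A_{n+1}=S_{n+1}$ and $\overline{B_n}$ to $B_{n+1}$) and Lemma~\ref{2.11KL}~(3) to see that $B_{n+1}$ is adjacent to $S_{n+1}$, the closed walk
\[
[S_{n+1}\,\overline{A_n}\,\overline{D^1}\cdots \overline{D^m}\,\overline{B_n}\,B_{n+1}\,S_{n+1}]
\]
is a cycle in $\G_{n+1}$; its vertices are pairwise distinct since the central $n$-balls $A_n, D^i, B_n$ separate the non-special lifts from the two distinct special extensions $A_{n+1}, B_{n+1}$.

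Finally, to pinpoint $C_{n+1}$ I would apply Lemma~\ref{lem:cyclic}~(2) to the now-cyclic graph $\G_{n+1}$: the neighbors of $S_{n+1} = A_{n+1}$ apart from itself lie in $\{B_{n+1}, C_{n+1}\}$, but the lifted cycle already exhibits two distinct neighbors of $S_{n+1}$, namely $\overline{A_n}$ and $B_{n+1}$. Hence $C_{n+1} = \overline{A_n}$, which is non-special (as $A_n$ is non-special), so $C_{n+1} \neq S_{n+1}$, and $C_{n+1}$ lies on the cycle as desired. The main obstacle is purely bookkeeping — verifying pairwise distinctness of the lifted vertices and handling the degenerate case $m=0$ (where $\G_n$ is the triangle $[S_n\,A_n\,B_n\,S_n]$ and the lift is a $4$-cycle), both of which follow from the uniqueness of extensions for non-special classes.
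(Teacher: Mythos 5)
Your proof is correct and takes essentially the same route as the paper's: both determine the shape of the cycle $[S_n\,A_n\,\cdots\,B_n\,S_n]$ from the restricted adjacency of $S_n$ when $C_n=S_n$, lift it through the unique extensions of the non-special classes together with Lemma~\ref{2.11KL}~(1) and (3) at the two special extensions, and then identify $C_{n+1}$ as the lift $\overline{A_n}$ (or $\overline{B_n}$) from the fact that $S_{n+1}$ can only be adjacent to $A_{n+1}$, $B_{n+1}$, $C_{n+1}$. The only differences are cosmetic: you reduce to $A_{n+1}=S_{n+1}$ by symmetry where the paper writes out both cases, and you spell out the distinctness and $m=0$ bookkeeping that the paper leaves implicit.
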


\begin{proof}
Suppose that $\G_{n}$ has a cycle which contains $C_{n} = S_{n}$. By definition of $C_n$ and $S_n$, it follows that either $A_{n+1}= S_{n+1}$ or $B_{n+1} = S_{n+1}$. 
As $C_n = S_n$, by Lemma~\ref{newlem} (1), the cycle in $\G_n$ is of the form 
$[S_n A_n C^1 \dots C^mB_n S_n]$.
Since we have by Lemma~\ref{2.11KL} (3), either $B_{n+1}$ is adjacent to $S_{n+1} = A_{n+1}$
or $A_{n+1}$ is adjacent to $S_{n+1} = B_{n+1}$.
Therefore, we have a cycle either
$[S_{n+1}  (= A_{n+1} ) \overline{A_n} \overline{C^1} \dots \overline{C^m} \overline{B_n} B_{n+1} S_{n+1}  (= A_{n+1} )]$ or $[S_{n+1}  (= B_{n+1} ) A_{n+1} \overline{A_n} \overline{C^1} \dots \overline{C^m} \overline{B_n} S_{n+1} (= B_{n+1} )]$.
Since $S_{n+1}$ is adjacent to $A_{n+1}$, $B_{n+1}$ or $C_{n+1}$, either $\overline{A_n}= C_{n+1}$ or $\overline{B_n}= C_{n+1}$. 
Therefore $\G_{n+1}$ has a cycle which contains $C_{n+1} \ne S_{n+1}$.
\end{proof}

\begin{proposition}\label{prop:circular} 
Suppose that for some $n$, $\G_n$ has a cycle. Let $m$ be the smallest integer such that $S_m \ne C_m$ are in the cycle of $\G_m$. 
\begin{enumerate}
\item
The coloring $\phi$ is of bounded type. 
\item 
Then the quotient graph $X_\phi$ of $(T,\phi)$ is isomorphic to $\varinjlim \G^B_n$ (resp. $X = \varinjlim \G^A_n$).
All the graphs $\G^A_{n}$ (or $\G^B_n$)  are isomorphic for $n \ge m$. 
\item By part (1), the quotient graph $X_\phi$ is an infinite ray by Theorem~\ref{thm:1.1}. As in Theorem~\ref{thm:1.1}, let $x_i$ be the $(i-1)$-th vertex from the left. The ball $\B_{m+i}(x_i)$ is special for all $i\geq0$.
\end{enumerate}
\end{proposition}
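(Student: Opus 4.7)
I will proceed in three paragraphs: first, a structural analysis of $\G_n$ for $n \ge m$; second, the bounded-type assertion (Part~(1)); and third, the direct-limit and special-ball assertions (Parts~(2) and~(3)).

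\textbf{Paragraph 1 (Structure of $\G_n$ for $n \ge m$).}
Lemmas~\ref{lem:cyclic} and~\ref{lem:graphs} guarantee that $m$ exists. At $n=m$, Lemma~\ref{lem:cyclic}(2) restricts the neighbors of $S_m$ in $\G_m$ to $\{A_m, B_m, C_m, S_m\}$; since $C_m \ne S_m$ lies on the cycle and $S_m, C_m$ are adjacent by Lemma~\ref{2.11KL}(4), $C_m$ is one of the two cycle-neighbors of $S_m$. After swapping the roles of $A$ and $B$ if needed, I may assume $A_m$ is the other cycle-neighbor and $B_m$ lies on a (possibly empty) tail attached at $S_m$. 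I then claim by induction on $n \ge m$ that $\G_n$ retains this shape --- a cycle of fixed length together with a tail grown by $n-m$ extra vertices --- and that $C_n \ne S_n$ throughout. The induction step uses $\overline{C_n}=S_{n+1}$ (unique extension since $C_n \ne S_n$), so the special class rotates one position along the cycle; $S_n$ bifurcates into $A_{n+1}$ (inserted into the cycle adjacent to the new $S_{n+1}$) and $B_{n+1}$ (inserted at the head of the tail); and the new $C_{n+1}$, being the cycle-neighbor of $S_{n+1}$ opposite $A_{n+1}$, differs from $S_{n+1}=\overline{C_n}$ because distinct cycle vertices of $\G_n$ have distinct unique extensions. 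Computing $i_A, i_B$ at the rotating special vertex via Lemmas~\ref{lem:index} and~\ref{lem:conf}, I then identify which of $\G^A_n, \G^B_n$ encodes the cycle side and show it is isomorphic via restriction to its level-$m$ counterpart for every $n \ge m$, while the other grows by exactly one new vertex per step via a natural inclusion.

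\textbf{Paragraph 2 (Part (1)).}
Bounded type will follow directly from $C_n \ne S_n$ for $n \ge m$. Fix $v \in V\T$ and suppose its type set contains two integers $m \le n < n'$. Then $[\B_n(v)]$ equals the iterated restriction of $[\B_{n'}(v)]=S_{n'}$ to level $n$, so $n'-n$ successive restrictions of $S_{n'}$ must yield $S_n$. But each restriction of a special class at level $\ge m+1$ gives a non-special class: $\underline{S_{k+1}}=C_k \ne S_k$ for every $k \ge m$, and each $C_k$ has a unique extension, so the iterated-restriction chain $S_{n'}, C_{n'-1}, \underline{C_{n'-1}}, \ldots$ consists entirely of non-special classes and never equals any $S_j$. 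This contradiction shows the type set of $v$ contains at most one integer in $[m,\infty)$, hence at most $m+1$ integers overall, proving that $\phi$ is of bounded type.

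\textbf{Paragraph 3 (Parts (2), (3), and main obstacle).}
By Theorem~\ref{thm:1.1} and Part~(1), $X_\phi$ is an infinite ray. For (2), the natural inclusions from Paragraph~1 make the direct limit of the growing family well-defined as an edge-indexed graph. Each vertex of $X_\phi$ is a $G$-orbit in $V\T$, and by bounded type two orbits coincide iff their $n$-ball classes agree for all sufficiently large $n$; matching each orbit with its eventual class in the growing $\G^?_n$ gives an edge-indexed-graph isomorphism $X_\phi \cong \varinjlim \G^?_n$, while the other family is stable by Paragraph~1. The ``(resp.)'' in the statement simply reflects the dichotomy produced by whether $A_m$ or $B_m$ sits on the cycle side. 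For (3), under this identification the vertex $x_i$ corresponds to the orbit on which the ball first becomes special at level $n=m+i$; this is precisely the orbit of centers of $S_{m+i}$, yielding $[\B_{m+i}(x_i)]=S_{m+i}$. The main obstacle is Paragraph~1: tracing the cycle rotation, determining the precise new vertex inserted at each step, and, through a careful index computation using Lemmas~\ref{lem:index}, \ref{lem:conf}, \ref{newlem}, and~\ref{2.11KL}, identifying which of $\G^A_n, \G^B_n$ stabilizes --- anchored by the geometric observation that $A_{n+1}$ extends $S_n$ onto the cycle side while $B_{n+1}$ extends it onto the tail side.
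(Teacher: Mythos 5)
Your Paragraph 2 contains the decisive gap. You claim that the iterated restriction chain $S_{n'}, C_{n'-1}, \underline{C_{n'-1}}, \ldots$ never meets a special class, and deduce that no vertex has two types in $[m,\infty)$. This is false, and the reason is already visible in your own Paragraph 1: you correctly observe that under extension the special class \emph{rotates one position along the cycle} ($\underline{S_{j+1}} = C_j$ is a cycle-neighbour of $S_j$, and restriction carries the cycle of $\G_{j+1}$ isomorphically onto the cycle of $\G_j$ with a shift by one position). Hence if $L$ denotes the length of the cycle, the $L$-fold restriction of $S_{n+L}$ is exactly $S_n$ for every $n \ge m$, so every center of $S_{n+L}$ has both $n$ and $n+L$ in its type set. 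The first cyclic example of Section~\ref{sec:5} shows this concretely: the cycle is a triangle, and the vertex realizing $S_{n+3}$ also realizes $S_n$. So the assertion ``the type set of $v$ contains at most one integer in $[m,\infty)$'' is simply not true, and your proof of Part~(1) collapses. (Unique extendability of $C_k$ buys you only one safe step of the chain: it rules out $\underline{C_{n'-1}}$ being special only when $C_{n'-1} \notin \{A_{n'-1}, B_{n'-1}\}$, and says nothing about later steps, which is exactly where the chain re-enters the special position after one full revolution.)

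The paper's route avoids this entirely: it shows that $B_{m+i}$ is \emph{not} a vertex of the stable cycle graph $\G^A_{m+i}$, that a class once outside $\G^A$ stays outside under extension, and that every special class lies in both $\G^A$ and $\G^B$; hence a vertex $x_i$ with $\B_{m+i}(x_i)=B_{m+i}$ has no type $\ge m+i$, so its type set is finite, and the fact from \cite{KL1} that bounded type at one vertex implies bounded type at all vertices finishes Part~(1). You would need this (or an argument that every vertex's ball class eventually enters the growing tail and remains there) in place of the restriction-chain claim. The same confusion leaks into your Paragraph 3 treatment of Part~(3): ``the orbit on which the ball first becomes special at level $m+i$'' does not characterize $x_i$, since a single orbit is special at several levels; the correct identification is via the level at which the class leaves the stable graph (equivalently, the maximum of the type set, as in Theorem~\ref{thm:1.1}). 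Your Paragraph 1 and the remainder of Paragraph 3 follow the paper's argument in outline, with the index computations deferred to Lemmas~\ref{lem:index} and~\ref{lem:conf} just as the paper does.
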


\begin{proof}
By Lemmas~\ref{lem:graphs} and \ref{lem:graphs2}, such $m$ exists.

Let $n \ge m$.
Denote the cycle in $\G_n$ by $[S_n C_n C^1 \dots C^k S_n]$,
where $C_n, C^1, \dots, C^k$ are distinct.  
It follows from Lemma~\ref{lem:cyclic} that $C^k$ is $A_n$ or $B_n$, say $C^k = A_n$. Thus the cycle is the underlying graph of $\G^A_n$.
Choose an $n$-ball colored by $S_n$ and adjacent to $A_n$. Its extension to $(n+1)$-ball is $A_{n+1}$ which is adjacent to $S_{n+1} = \overline{C_n}$ and $\overline{ A_n} = \overline{C^k}$.
Therefore, 
$[A_{n+1} S_{n+1}\overline{ C^1} \dots \overline{A_n} A_{n+1}]$
is the cycle in $\G_{n+1}$ and $\overline{C^1} = C_{n+1}$.  
By the the same argument for $\G^A_n$, we deduce that   
$\G^A_{n+1}$ is the edge indexed graph with underlying graph the cycle.   
By Lemma~\ref{lem:conf}, we have $\G^A_{n+1} \cong \G^A_n$.

By repeating this procedure, it follows that $\G^A_{m+i} \cong \G^A_m$ and $B_{m+i}$ is not a vertex of $\G^A_{m+i}$ for any $i \ge 1$.
Let $x_i$ be a vertex of $T$ such that $\B_{m+i} (x_i) = B_{m+i}$ for $i \ge 1$.
Then $\B_{m+j} (x_i) \in V\G^B_{m+j} - V\G^A_{m+j}$ for any $j \ge i$ which implies that $\B_{m+j} (x_i)$ is not special for any $j \ge i$. 
Since the type set at $x_i$ is finite, $\phi$ is of bounded type. 
It follows that each vertex $x$ in $T$ satisfies $\B_{m+i} (x) = B_{m+i}$ for some $i \ge 1$.
Moreover, the number of neighboring vertices is determined by the edge index of $\G^B_n$ and
the quotient graph $X = \varinjlim \G^B_n$.

We now prove that $\G^A_{m} \neq \G^A_{m-1}$.
If $\G_{m-1}$ does not have a cycle, then $\G^A_{m-1}$ is not isomorphic to $\G^A_{m}$ which is cyclic.
If $\G_{m-1}$ has a cycle which does not contain $C_{m-1}$, then  
by the proof of Lemma~\ref{lem:graphs}, the cycle in $\G^A_m$ is smaller than the cycle of $\G_{m-1}$.
If $\G_{m-1}$ has a cycle containing $S_{m-1} = C_{m-1}$, then by the proof of Lemma~\ref{lem:graphs2}, the cycle in $\G^A_m$ is smaller than the cycle of $\G_{m-1}$.

As for part (3), it is easy to see that the integer $m$ in Theorem~\ref{thm:1.1} and the $m$ here coincide.

\end{proof}

\begin{example}
An example of a cyclic coloring: \\

\begin{center}$X$ :
\begin{tikzpicture}[every loop/.style={}]
  \tikzstyle{every node}=[inner sep=0pt]
  \node (0) {$\bullet$};
  \node (1) at (1,0) {$\circ$};
  \node (2) at (2,0) {$\circ$};
  \node (3) at (3,0) {$\bullet$};
  \node (4) at (4,0) {$\circ$};
  \node (5) at (5,0) {$\circ$};
  \node (6) at (6,0) {$\bullet$};
  \node (7) at (7,0) {$\circ$};
  \node (8) at (8,0) {$\circ$};
  \node (9) at (9,0) {$\cdots$};

  \path[-] (0) edge node [above=4pt] {3 \quad 1} (1)
		 (1) edge node [above=4pt] {2 \quad 1} (2)
		 (2) edge node [above=4pt] {2 \quad 1} (3)
		 (3) edge node [above=4pt] {2 \quad 1} (4)
		 (4) edge node [above=4pt] {2 \quad 1} (5)
		 (5) edge node [above=4pt] {2 \quad 1} (6)
		 (6) edge node [above=4pt] {2 \quad 1} (7)
		 (7) edge node [above=4pt] {2 \quad 1} (8)
		 (8) edge node [above=4pt] {2 \quad \ } (9);
\end{tikzpicture}
\end{center}

$$\G^A_0 :
\begin{tikzpicture}[every loop/.style={}]
  \tikzstyle{every node}=[inner sep=-1pt]
  \node (1) at (0,0) {$\circ$};
  \node (2) at (1,0) {$\bullet$};
  \path[-] 
	(1)  edge node [above=4pt] {1 \quad 3} (2);
  \path[-] (1) edge [loop left] node [above=6pt,right=4pt] {2} (1);
\end{tikzpicture}  \qquad
\G^B_0 :
\begin{tikzpicture}[every loop/.style={}]
  \tikzstyle{every node}=[inner sep=-1pt]
  \node (1) at (0,0) {$\circ$};
  \node (2) at (1,0) {$\bullet$};
  \path[-] 
	(1)  edge node [above=4pt] {2 \quad 3} (2);
  \path[-] (1) edge [loop left] node [above=6pt,right=4pt] {1} (1);
\end{tikzpicture}$$
$$\G^A_1 :
\begin{tikzpicture}[every loop/.style={}]
  \tikzstyle{every node}=[inner sep=-1pt]
  \node (1) at (0,.5) {$\circ$};
  \node (2) at (0,-.5) {$\circ$};
  \node (3) at (1,0) {$\bullet$};
  \path[-] 
	(1) edge node [above=8pt,left=5pt] {2} node [below=8pt,left=5pt] {1} (2)
	(1) edge node [above=10pt,left=5pt] {1} node [above=3pt,right=8pt] {3} (3)
	(2) edge node [below=10pt,left=5pt] {2} node [below=3pt,right=8pt] {0} (3);
\end{tikzpicture}  \qquad
\G^B_1 :
\begin{tikzpicture}[every loop/.style={}]
  \tikzstyle{every node}=[inner sep=-1pt]
  \node (1) at (0,.5) {$\circ$};
  \node (2) at (0,-.5) {$\circ$};
  \node (3) at (1,0) {$\bullet$};
  \path[-] 
	(1) edge node [above=8pt,left=5pt] {2} node [below=8pt,left=5pt] {1} (2)
	(1) edge node [above=10pt,left=5pt] {1} node [above=3pt,right=8pt] {2} (3)
	(2) edge node [below=10pt,left=5pt] {2} node [below=3pt,right=8pt] {1} (3);
\end{tikzpicture}$$
$$\G^A_2 :
\begin{tikzpicture}[every loop/.style={}]
  \tikzstyle{every node}=[inner sep=-1pt]
  \node (0) at (-1,0) {$\bullet$};
  \node (1) at (0,0) {$\circ$};
  \node (2) at (1,.5) {$\bullet$};
  \node (3) at (1,-.5) {$\circ$};
  \path[-] 
	(0) edge node [above=4pt] {3 \quad 1} (1)
	(1) edge node [above=10pt,right=5pt] {2} node [above=3pt,left=8pt] {0} (2)
	(1) edge node [below=10pt,right=5pt] {1} node [below=3pt,left=8pt] {2} (3)
	(2) edge node [above=8pt,right=5pt] {1} node [below=8pt,right=5pt] {2} (3);
\end{tikzpicture}  \qquad
\G^B_2 :
\begin{tikzpicture}[every loop/.style={}]
  \tikzstyle{every node}=[inner sep=-1pt]
  \node (1) at (0,0) {$\circ$};
  \node (2) at (1,.5) {$\bullet$};
  \node (3) at (1,-.5) {$\circ$};
  \path[-] 
	(1) edge node [above=10pt,right=5pt] {2} node [above=3pt,left=8pt] {1} (2)
	(1) edge node [below=10pt,right=5pt] {1} node [below=3pt,left=8pt] {2} (3)
	(2) edge node [above=8pt,right=5pt] {1} node [below=8pt,right=5pt] {2} (3);
\end{tikzpicture}$$
$$\G^A_3 :
\begin{tikzpicture}[every loop/.style={}]
  \tikzstyle{every node}=[inner sep=-1pt]
  \node (-1) at (-2,0) {$\bullet$};
  \node (0) at (-1,0) {$\circ$};
  \node (1) at (0,0) {$\circ$};
  \node (2) at (1,.5) {$\circ$};
  \node (3) at (1,-.5) {$\bullet$};
  \path[-] 
	(-1) edge node [above=4pt] {3 \quad 1} (0)
	(0) edge node [above=4pt] {2 \quad 1} (1)
	(1) edge node [above=10pt,right=5pt] {2} node [above=3pt,left=8pt] {0} (2)
	(1) edge node [below=10pt,right=5pt] {1} node [below=3pt,left=8pt] {2} (3)
	(2) edge node [above=8pt,right=5pt] {1} node [below=8pt,right=5pt] {2} (3);
\end{tikzpicture}  \qquad
\G^B_3 :
\begin{tikzpicture}[every loop/.style={}]
  \tikzstyle{every node}=[inner sep=-1pt]
  \node (1) at (0,0) {$\circ$};
  \node (2) at (1,.5) {$\circ$};
  \node (3) at (1,-.5) {$\bullet$};
  \path[-] 
	(1) edge node [above=10pt,right=5pt] {2} node [above=3pt,left=8pt] {1} (2)
	(1) edge node [below=10pt,right=5pt] {1} node [below=3pt,left=8pt] {2} (3)
	(2) edge node [above=8pt,right=5pt] {1} node [below=8pt,right=5pt] {2} (3);
\end{tikzpicture}$$
$$\G^A_4 :
\begin{tikzpicture}[every loop/.style={}]
  \tikzstyle{every node}=[inner sep=-1pt]
  \node (-2) at (-3,0) {$\bullet$};
  \node (-1) at (-2,0) {$\circ$};
  \node (0) at (-1,0) {$\circ$};
  \node (1) at (0,0) {$\bullet$};
  \node (2) at (1,.5) {$\circ$};
  \node (3) at (1,-.5) {$\circ$};
  \path[-] 
	(-2) edge node [above=4pt] {3 \quad 1} (-1)
	(-1) edge node [above=4pt] {2 \quad 1} (0)
	(0) edge node [above=4pt] {2 \quad 1} (1)
	(1) edge node [above=10pt,right=5pt] {2} node [above=3pt,left=8pt] {0} (2)
	(1) edge node [below=10pt,right=5pt] {1} node [below=3pt,left=8pt] {2} (3)
	(2) edge node [above=8pt,right=5pt] {1} node [below=8pt,right=5pt] {2} (3);
\end{tikzpicture}  \qquad
\G^B_4 :
\begin{tikzpicture}[every loop/.style={}]
  \tikzstyle{every node}=[inner sep=-1pt]
  \node (1) at (0,0) {$\bullet$};
  \node (2) at (1,.5) {$\circ$};
  \node (3) at (1,-.5) {$\circ$};
  \path[-] 
	(1) edge node [above=10pt,right=5pt] {2} node [above=3pt,left=8pt] {1} (2)
	(1) edge node [below=10pt,right=5pt] {1} node [below=3pt,left=8pt] {2} (3)
	(2) edge node [above=8pt,right=5pt] {1} node [below=8pt,right=5pt] {2} (3);
\end{tikzpicture}$$

\end{example}

\begin{example}
An example with an $n$-ball $C$ adjacent to $S_n$ and which is not one of $S_n$, $A_n$, $B_n$, $C_n$. 
Note that this case can happen only for bounded type Sturmian colorings. \\
 
\begin{center}$X$ : 
\begin{tikzpicture}[every loop/.style={}]
  \tikzstyle{every node}=[inner sep=0pt]
  \node (0) {$\circ$};
  \node (1) at (1,0) {$\circ$};
  \node (2) at (2,0) {$\bullet$};
  \node (3) at (3,0) {$\circ$};
  \node (4) at (4,0) {$\circ$};
  \node (5) at (5,0) {$\circ$};
  \node (6) at (6,0) {$\bullet$};
  \node (7) at (7,0) {$\circ$};
  \node (8) at (8,0) {$\circ$};
  \node (9) at (9,0) {$\circ$};
  \node (10) at (10,0) {$\bullet$};
  \node (11) at (11,0) {$\cdots$};

  \path[-] (0) edge node [above=4pt] {3 \quad 1} (1)
		 (1) edge node [above=4pt] {1 \quad 1} (2)
		 (2) edge node [above=4pt] {2 \quad 1} (3)
		 (3) edge node [above=4pt] {2 \quad 1} (4)
		 (4) edge node [above=4pt] {2 \quad 1} (5)
		 (5) edge node [above=4pt] {1 \quad 1} (6)
		 (6) edge node [above=4pt] {2 \quad 1} (7)
		 (7) edge node [above=4pt] {2 \quad 1} (8)
		 (8) edge node [above=4pt] {2 \quad 1} (9)
		 (9) edge node [above=4pt] {1 \quad 1} (10)
		 (10) edge node [above=4pt] {2 \quad \ } (11);
  \path[-] (1) edge [loop above] (1)
		 (5) edge [loop above] (5)
	 	 (9) edge [loop above] (9);
\end{tikzpicture}
\end{center}

$$\G^A_0 :
\begin{tikzpicture}[every loop/.style={}]
  \tikzstyle{every node}=[inner sep=-1pt]
  \node (1) at (0,0) {$\circ$};
  \path[-] (1) edge [loop left] node [above=6pt,right=4pt] {3} (1);
\end{tikzpicture}  \qquad
\G^B_0 :
\begin{tikzpicture}[every loop/.style={}]
  \tikzstyle{every node}=[inner sep=-1pt]
  \node (1) at (0,0) {$\circ$};
  \node (2) at (1,0) {$\bullet$};
  \path[-] 
	(1)  edge node [above=4pt] {1 \quad 3} (2);
  \path[-] (1) edge [loop left] node [above=6pt,right=4pt] {2} (1);
\end{tikzpicture}$$
$$\G^A_1 :
\begin{tikzpicture}[every loop/.style={}]
  \tikzstyle{every node}=[inner sep=-1pt]
  \node (0) at (0,0) {$\circ$};
  \node (1) at (1,0) {$\circ$};
  \node (2) at (2,0) {$\bullet$};
  \path[-] 
	(0)  edge node [above=4pt] {3 \quad 2} (1)
	(1)  edge node [above=4pt] {1 \quad 3} (2);
\end{tikzpicture}  \qquad
\G^B_1 :
\begin{tikzpicture}[every loop/.style={}]
  \tikzstyle{every node}=[inner sep=-1pt]
  \node (0) at (0,0) {$\circ$};
  \node (1) at (1,0) {$\circ$};
  \node (2) at (2,0) {$\bullet$};
  \path[-] 
	(1) edge [loop above] (1)
	(0)  edge node [above=4pt] {3 \quad 1} (1)
	(1)  edge node [above=4pt] {1 \quad 3} (2);
\end{tikzpicture}$$
$$\G^A_2 :
\begin{tikzpicture}[every loop/.style={}]
  \tikzstyle{every node}=[inner sep=-1pt]
  \node (0) at (0,0) {$\circ$};
  \node (1) at (1,.5) {$\circ$};
  \node (2) at (2,0) {$\bullet$};
  \node (3) at (1,-.5) {$\circ$};
  \path[-] 
	(1) edge [loop above] (1)
	(0) edge node [above=10pt,right=5pt] {1} node [above=3pt,left=8pt] {2} (1)
	(0) edge node [below=10pt,right=5pt] {2} node [below=3pt,left=8pt] {1} (3)
	(1) edge node [above=10pt,left=5pt] {1} node [above=3pt,right=8pt] {1} (2)
	(3) edge node [below=10pt,left=5pt] {1} node [below=3pt,right=8pt] {2} (2);
\end{tikzpicture}  \qquad
\G^B_2 :
\begin{tikzpicture}[every loop/.style={}]
  \tikzstyle{every node}=[inner sep=-1pt]
  \node (0) at (0,0) {$\circ$};
  \node (1) at (1,.5) {$\circ$};
  \node (2) at (2,0) {$\bullet$};
  \node (3) at (1,-.5) {$\circ$};
  \path[-] 
	(1) edge [loop above] (1)
	(0) edge node [above=10pt,right=5pt] {1} node [above=3pt,left=8pt] {3} (1)
	(0) edge node [below=10pt,right=5pt] {2} node [below=3pt,left=8pt] {0} (3)
	(1) edge node [above=10pt,left=5pt] {1} node [above=3pt,right=8pt] {1} (2)
	(3) edge node [below=10pt,left=5pt] {1} node [below=3pt,right=8pt] {2} (2);
\end{tikzpicture}$$
$$\G^A_3 :
\begin{tikzpicture}[every loop/.style={}]
  \tikzstyle{every node}=[inner sep=-1pt]
  \node (0) at (0,0) {$\circ$};
  \node (1) at (1,.5) {$\circ$};
  \node (2) at (2,0) {$\bullet$};
  \node (3) at (1,-.5) {$\circ$};
  \path[-] 
	(1) edge [loop above] (1)
	(0) edge node [above=10pt,right=5pt] {1} node [above=3pt,left=8pt] {2} (1)
	(0) edge node [below=10pt,right=5pt] {2} node [below=3pt,left=8pt] {1} (3)
	(1) edge node [above=10pt,left=5pt] {1} node [above=3pt,right=8pt] {1} (2)
	(3) edge node [below=10pt,left=5pt] {1} node [below=3pt,right=8pt] {2} (2);
\end{tikzpicture}  \qquad
\G^B_3 :
\begin{tikzpicture}[every loop/.style={}]
  \tikzstyle{every node}=[inner sep=-1pt]
  \node (0) at (0,.5) {$\circ$};
  \node (1) at (1,.5) {$\circ$};
  \node (2) at (2,0) {$\bullet$};
  \node (3) at (1,-.5) {$\circ$};
  \node (4) at (0,0) {$\circ$};
  \path[-] 
	(1) edge [loop above] (1)
	(0) edge node [above=4] {3 \quad 1} (1)
	(1) edge node [above=10pt,left=5pt] {1} node [above=3pt,right=8pt] {1} (2)
	(3) edge node [below=10pt,left=5pt] {1} node [below=3pt,right=8pt] {2} (2)
	(3) edge node [below=10pt,right=5pt] {2} node [below=3pt,left=8pt] {1} (4)
	(4) edge node [above=10pt,right=5pt] {0} node [above=3pt,left=8pt] {2} (1);
\end{tikzpicture}$$
Let $n=1$. Then any special 1-ball is adjacent to the 1-ball with center $b= \bullet$ which is not the central $1$-ball of special $j$- balls, $j= 0,1,2$.
\end{example}

\subsection{Sturmian colorings of bounded type} \label{sec:6}
In this subsection, we show that Proposition~\ref{prop:circular} is a more general phenomenon: the second statement of part (2) is a characterization of a Sturmian coloring of a bounded type.
Let $\phi$ be a Sturmian coloring of bounded type, i.e. 
the type set of each vertex is finite. 
By Theorem~\ref{thm:1.1}, we know that the quotient graph $X$ is an infinite ray. Denote the vertices of $X$ from the left by $(x_i)_{i \geq 0}$ as in Figure~\ref{fig:6}.

\begin{theorem}\label{thm:bounded1}    
Let $\phi$ be a Sturmian coloring.

\begin{enumerate}
\item The coloring $\phi$ is of bounded type if and only if either all the $\G^A_n$'s or all the $\G^B_n$'s are isomorphic for sufficiently large $n$.
\item Moreover, if $\G^A_{n}$ (resp. $\G^B_{n}$) are all isomorphic for sufficiently large $n$, then the quotient graph $X_\phi = \varinjlim \G^B_n$ (resp. $X = \varinjlim \G^A_n$).
\item Let $m$ be the smallest integer such that $\G^A_{n+1} \cong \G^A_n$ for all $n \geq m$. 
Then $\B_{m+i}(x_i)$ is special for all $i\geq0$. In particular, $\B_m(x_0)$ is the $m$-special ball.
\end{enumerate}
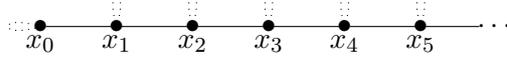
\begin{figure}
\begin{tikzpicture}[every loop/.style={}]
  \tikzstyle{every node}=[inner sep=-1pt]
  \node (5) at (-1,0) {$\bullet$} node [below=4pt] at (-1,0) {$x_0$};
  \node (6) at  (0,0) {$\bullet$} node [below=4pt] at (0,0) {$x_1$};
  \node (7) at  (1,0) {$\bullet$} node [below=4pt] at (1,0) {$x_2$};
  \node (8) at  (2,0) {$\bullet$} node [below=4pt] at (2,0) {$x_3$};
  \node (9) at  (3,0) {$\bullet$} node [below=4pt] at (3,0) {$x_4$};
  \node (10) at (4,0) {$\bullet$} node [below=4pt] at (4,0) {$x_5$};
  \node (11) at (5,0) {$\cdots$} ;

\tikzstyle{every loop}=   [-, shorten >=.5pt]
  \path[-] 
	(5)  edge  (6)
	(6)  edge (7)
	(7)  edge (8)
	(8)  edge (9)
	(9)  edge  (10)
	(10) edge (11);
  \path[dotted] 
		 (5) edge [loop left] (5)
		 (6) edge [loop above] (6)
		 (7) edge [loop above] (7)
		 (8) edge [loop above] (8)
		 (9) edge [loop above] (9)
		 (10) edge [loop above] (10);  
\end{tikzpicture}
\caption{Quotient graph of bounded type Sturmian coloring}
\label{fig:6}
\end{figure}

\end{theorem}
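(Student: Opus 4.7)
The argument splits according to whether $\phi$ is cyclic. In the cyclic case, Proposition~\ref{prop:circular} simultaneously establishes all three assertions: it produces an integer $m$ at which one of $\G^*_n$ stabilizes ($* \in \{A,B\}$), identifies $X_\phi \cong \varinjlim \G^*_n$, proves bounded type, and shows $\B_{m+i}(x_i) = S_{m+i}$. The integer $m$ coincides with the one in part~(3), since the non-cycle side is already stable by the time the cycle appears.

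In the acyclic case, we rely on the induction algorithm of Theorem~\ref{thm:4}. Part~(3) of that theorem shows $\G^A_n, \G^B_n$ change only at $n = n_k$, with precisely $\G^{\alpha_k}_{n_k}$ undergoing a strict enlargement at each such step; hence $\G^A_n$ (resp.\ $\G^B_n$) stabilizes iff $\alpha_k = B$ (resp.\ $A$) for all large $k$. Assume $\G^A_n = \mathcal{H}_A$ for $n \geq N$. Theorem~\ref{thm:6} gives $X_\phi \cong \varinjlim \G^{\beta_k(t)}_{n_k}$ for any $t \in VT$. Since $\mathcal{H}_A$ is finite but $X_\phi$ is infinite, $\beta_k(t)$ cannot be eventually $A$; the admissibility $\beta_k \in \{\alpha_k, \beta_{k-1}\} = \{B, \beta_{k-1}\}$ then forces $\beta_k(t) = B$ for all large $k$, so $X_\phi \cong \varinjlim \G^B_n$, proving part~(2). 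Bounded type follows because each concatenation $\G^B_{n_{k+1}} = \mathcal{H}_A \bowtie \G^B_{n_k}$ adds new vertices and, by Proposition~\ref{lem:nk}, places $S_{n_{k+1}}$ at the joining vertex, so the position of $S_n$ in the resulting ray $X_\phi$ advances strictly with $k$; since for $n_k \le n < n_{k+1}$ the center of $S_n$ coincides with that of $S_{n_k}$ under the restriction isomorphism, every vertex of $X_\phi$ is the center of $S_n$ for only finitely many $n$.

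For the converse direction in part~(1), assume $\phi$ is of bounded type, so by Theorem~\ref{thm:1.1} the graph $X_\phi$ is a one-ended ray. By Lemma~\ref{lem:8} the sequence $\beta_k(t)$ is eventually independent of $t$; call the common eventual value $\beta^{\infty}_k$. If $\beta^{\infty}_k$ were not eventually constant, then by admissibility $\beta^{\infty}_k = \alpha_k$ at each switch, and the corresponding direct limit---alternately prepending $\G^A$- and $\G^B$-pieces at opposite sides of the successive joining vertices---would be a two-ended biinfinite line, contradicting the one-endedness of $X_\phi$. Hence $\beta^{\infty}_k$ is eventually constant, say equal to $A$; combining this with Lemma~\ref{lem:4new} and Definition~\ref{def:4.8}, any vertex of $V\G^B_{n_k} \setminus V\G^A_{n_k}$ lifts to a basepoint with $\beta_{k-1} = B$, so if $\G^B$ also grew unboundedly one would produce basepoints contradicting the uniformity of $\beta^{\infty}$. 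Thus exactly one of $\G^A, \G^B$ stabilizes. Part~(3) is then established by inspection: with $m = n_{k_0}$ (where $k_0$ is the last index with $\alpha_{k_0} = A$), Proposition~\ref{lem:nk} gives $S_m$ as a degree-$1$ end of $\G^A_m$, and under $X_\phi \cong \varinjlim \G^B_n$ the subsequent $B$-concatenations prepend copies of $\mathcal{H}_A$ only on the opposite end of the ray; hence $S_m$ corresponds to the terminal vertex $x_0$, and inductively each further concatenation shifts the label $S_n$ by one along the ray, giving $\B_{m+i}(x_i) = S_{m+i}$.

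\textbf{Main obstacle.} The most delicate step is the converse direction of part~(1): arguing that simultaneous unbounded growth of both $\G^A$ and $\G^B$ is incompatible with the uniformity of $\beta^{\infty}_k$ (Lemma~\ref{lem:8}) combined with $X_\phi$ being one-ended. The vertex-counting lift argument sketched above requires a careful tracking of joining vertices and the symmetric differences $V\G^A_{n_k} \triangle V\G^B_{n_k}$ through successive concatenations; the alternative route, deducing directly that alternating $\alpha$ forces a biinfinite direct limit, likewise rests on a delicate analysis of how the inclusion $\G^{\beta_k}_{n_k} \hookrightarrow \G^{\beta_{k+1}}_{n_{k+1}}$ behaves when $\beta$ switches value.
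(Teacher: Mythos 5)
Your treatment of the cyclic case (deferring to Proposition~\ref{prop:circular}), of part (2) (the stabilized graph $\mathcal{H}_A$ is finite while $X_\phi$ is infinite, so $\beta_k(t)$ cannot be eventually $A$ and admissibility forces $\beta_k(t)=B$), of the ``stabilization $\Rightarrow$ bounded type'' direction (the special centers march along the copies of $\mathcal{H}_A$ by Proposition~\ref{lem:nk}, so each vertex is special only finitely often), and of part (3) all match the paper's argument in substance. The problem is the converse direction of part (1), which you yourself flag as the main obstacle and do not actually prove. Your proposed route is: bounded type $\Rightarrow$ $X_\phi$ is a one-ended ray (Theorem~\ref{thm:1.1}) $\Rightarrow$ $\beta^{\infty}_k$ is eventually constant, because otherwise the direct limit ``would be a two-ended biinfinite line.'' That implication is precisely the content that needs proof: you must track, through the recursive definition of the common end vertices in Definition~\ref{def:F}, that when $\alpha_k$ takes both values infinitely often the successive concatenations append new vertices on \emph{both} ends of $\F^{\beta_k}_k$ infinitely often. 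You assert this but do not verify it, and your fallback (``one would produce basepoints contradicting the uniformity of $\beta^{\infty}$'') is also left as a sketch; moreover ``uniformity of $\beta^\infty$'' is delicate, since Lemma~\ref{lem:8} gives a threshold $k_0$ depending on the pair of vertices, and no single threshold can work for all of the infinite graph $X_\phi$. Note also that eventual constancy of $\beta^\infty_k$ alone does not immediately yield eventual constancy of $\alpha_k$, which is what stabilization of one of $\G^A_n,\G^B_n$ actually amounts to; your argument needs a further step there.

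The paper closes this gap by a much more direct argument that never touches the topology of the direct limit. If $\phi$ is of bounded type, pick a vertex $t$ and an integer $m$ such that $\B_n(t)$ is not special for any $n>m$. Proposition~\ref{lem:nk} says that every vertex of $\G^{\overline{\alpha_k}}_{n_k}$ at distance $i$ from $S_{n_k}$ is the central $n_k$-ball of $S_{n_k+i}$; hence for every $n_k>m+1$ one must have $[\B_{n_k}(t)]\in V\G^{\alpha_k}_{n_k}\setminus V\G^{\overline{\alpha_k}}_{n_k}$. A short induction then shows $\alpha_{k+l}=\alpha_k$ for all $l>0$: if $l$ were minimal with $\alpha_{k+l}=\overline{\alpha_{k+l-1}}$, then $[\B_{n_{k+l}-1}(t)]\in V\G^{\alpha_{k+l}}_{n_{k+l}-1}$ would force $[\B_{n_{k+l-1}}(t)]\in V\G^{\overline{\alpha_{k+l-1}}}_{n_{k+l-1}}$, a contradiction. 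Eventual constancy of $\alpha_k$ is exactly the statement that one of $\G^A_n$, $\G^B_n$ stabilizes. I would recommend replacing your one-endedness argument with this; as it stands, the key step of the ``only if'' direction is missing.
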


\begin{proof}   
We showed all the statements in Proposition~\ref{prop:circular} for cyclic Sturmian colorings.

Now we prove part (1) for acyclic Sturmian colorings.
 If $\phi$ is of bounded type, then 
there exists a vertex $t \in V\T$ and some integer $m$ such that $t$ is not the center of the special $n$-ball for all $n > m$. 
For any $n_k > m+1$, 
\begin{equation}\label{eqn:star} 
\N_{n_k}(t) \in V \G^{\alpha_k}_{n_k} - V \G^{\overline{\alpha_k}}_{n_k} 
\end{equation} 
by the first statement of Lemma~\ref{lem:nk}.
We claim that $\alpha_{k+l} = \alpha_k$ for all $l>0$. Indeed, otherwise, for the minimal $l$ such that $\alpha_{k+l} =\overline{ \alpha_{k+l-1}}$,
$\N_{n_{k+l}-1}(t)$ is a vertex in $\G^{\alpha_{k+l}}_{n_{k+l}-1}$ thus $\N_{n_{k+l-1}}(t)$ is a vertex of $\G^{\overline{\alpha_{k+l-1}}}_{n_{k+l-1}}$, which is a contradiction to (\ref{eqn:star}).

Conversely, if $\G^B_n$ are all isomorphic for sufficiently large $n$, then by Proposition~\ref{lem:nk}, for any $\N_n(t) \in V\G^A_n - V \G^B_n$, $\N_m(t)$ is not special for all $m \geq n$. Thus $\phi$ is of bounded type.

For part (2), suppose that  there exists $m$ such that $\G^A_{m+\ell}$ is isomorphic for all $\ell \ge 0$.
Choose $t',t'' \in VX$ of distance larger than $|V\G^A_m|$. By Lemma~\ref{lem:8}, for $n_k >m$, the $n_k$-balls around vertices between $t',t''$ are in $\G^{\beta_k}_{n_k}$, thus $\beta_k =B$ for $n_k >m$.

Part (3) for acyclic colorings follows from Proposition~\ref{lem:nk}.
\end{proof}

\section*{Acknowledgement} We thank the anonymous referee for valuable comments.
We would also like to thank the hospitality of KIAS, of which both authors are associate members and where part of this work was done. 
The first author was supported by the National Research Foundation of Korea (NRF-2015R1A2A2A01007090) and the second author is supported by Samsung Science and Technology Foundation under Project No. SSTF-BA1601-03.

\end{document}